\documentclass[reqno,a4paper,11pt]{amsart}
\usepackage[T1]{fontenc} 
\usepackage{lmodern} 
\usepackage{amsmath,amsthm,amssymb,mathrsfs}
\usepackage[
centering,
left = 2.4cm,
right = 2.4cm,
top = 2.5 cm,
bottom = 2.3 cm
]{geometry}
\usepackage{graphicx,mathtools}
\usepackage[svgnames]{xcolor}
\usepackage{float}

\usepackage{todonotes}
\makeatletter
\define@key{todonotes}{Mingwen}[]{%
\setkeys{todonotes}{author=\textbf{Mingwen},inline,color=red!20}}%
\define@key{todonotes}{Xiang}[]{%
\setkeys{todonotes}{author=\textbf{Xiang},inline,color=green!20}}%
\define@key{todonotes}{Yadong}[]{%
\setkeys{todonotes}{author=\textbf{Yadong},inline,color=cyan!20}}%
\makeatother

\usepackage{tikz}
\usetikzlibrary{hobby} 
\usetikzlibrary{arrows.meta, positioning}
\usepackage{wrapfig}
\usepackage{pgfplots}
\pgfplotsset{compat=1.18} 
\usepackage{subcaption}

\usepackage{bm,stmaryrd}
\usepackage{enumitem}
\usepackage[toc,page]{appendix}
\usepackage[
pdfencoding = auto,
colorlinks = true,
citecolor = blue,
linkcolor = black,
anchorcolor = blue,
urlcolor = blue
]{hyperref}
\usepackage{cite}
\numberwithin{equation}{section}
\allowdisplaybreaks[4]

\theoremstyle{plain}
\newtheorem{theorem}{Theorem}[section]

\newtheorem{definition}[theorem]{Definition}
\newtheorem{lemma}[theorem]{Lemma}
\newtheorem{proposition}[theorem]{Proposition}
\newtheorem{assumption}[theorem]{Assumption}
\newtheorem{remark}[theorem]{Remark}

\newcommand{\bb}[1]{\mathbb{#1}}
\newcommand{\bbr}{\bb{R}}
\newcommand{\bbi}{\bb{I}}

\newcommand{\bbt}{\bb{T}}

\newcommand{\bbd}{\bb{D}}

\newcommand{\bbz}{\bb{Z}}

\newcommand{\bu}{\mathbf{u}}

\newcommand{\bw}{\mathbf{w}}

\newcommand{\be}{\mathbf{e}}

\newcommand{\bphi}{\boldsymbol{\varphi}}

\newcommand{\cB}{\mathcal{B}}

\newcommand{\cE}{\mathcal{E}}

\newcommand{\cL}{\mathcal{L}}
\newcommand{\cM}{\mathcal{M}}
\newcommand{\cN}{\mathcal{N}}

\newcommand{\cT}{\mathcal{T}}
\newcommand{\cS}{\mathcal{S}}

\newcommand{\tm}{\tilde{m}}

\renewcommand{\d}{\mathrm{d}}
\newcommand{\dx}{\d x}

\newcommand{\dxdt}{\d x\d t}

\newcommand{\dt}{\d t}
\newcommand{\ddt}{\frac{\d}{\d t}}
\newcommand{\dtau}{\d\tau}
\newcommand{\ptial}[1]{ \partial_{#1} }
\newcommand{\pt}{\ptial{t}}
\newcommand{\onehalf}{\frac{1}{2}}

\newcommand{\abs}[1]{\left\vert #1 \right \vert}
\newcommand{\absm}[1]{\vert #1 \vert}
\newcommand{\norm}[1]{\left\Vert #1 \right \Vert}
\newcommand{\normm}[1]{\Vert #1 \Vert}

\newcommand{\inner}[2]{\left\langle #1 , #2 \right\rangle} 

\newcommand{\doi}[1]{DOI: \href{https://doi.org/#1}{\texttt{#1}}}
\newcommand{\arxiv}[1]{arXiv: \href{https://arxiv.org/abs/#1}{\texttt{#1}}}

\usepackage{esint} 
\newcommand{\mean}[1]{\fint_{\bbt^3} #1 \,\dx}
\newcommand{\zeromean}[1]{#1 - \mean{#1}}

\DeclareMathOperator{\Div}{\mathrm{div}}

\DeclareMathOperator*{\supp}{\mathrm{supp}}

\DeclareMathOperator*{\meas}{\mathrm{meas}}

\title[NSCH system for quasi-incompressible two-phase flows]{Global weak solutions to a diffuse-interface model \\ for quasi-incompressible two-phase flows with \\ 
unmatched densities and singular potential}

\author{Mingwen Fei}
\address{School of  Mathematics and Statistics, Anhui Normal University, Wuhu 241002, P. R. China}
\email{mwfei@ahnu.edu.cn}
\author{Xiang Fei}
\address{School of  Mathematics and Statistics, Anhui Normal University, Wuhu 241002, P. R. China}
\email{feixiang@ahnu.edu.cn}
\author{Yadong Liu}
\address{School of Mathematical Sciences, Ministry of Education Key Laboratory of NSLSCS, and Key Laboratory of Jiangsu Provincial Universities of FDMTA, Nanjing Normal University, Nanjing 210023, P. R. China}
\email{ydliu@njnu.edu.cn}
\author{Hao Wu}
\address{School of Mathematical Sciences, Fudan University, Shanghai 200433, P. R. China}
\email{haowufd@fudan.edu.cn}

\date{\today}

\subjclass[2020]{Primary: 
76T06; 
Secondary:
35Q30, 
35Q35, 
35R35, 
76D03, 
76D45, 
76T99. 
}
\keywords{Two-phase flow, quasi-incompressibility, Navier--Stokes equations, Cahn--Hilliard equation, global weak solutions, singular potential, BD-entropy}

\begin{document}	
\begin{abstract} 
	We study a thermodynamically consistent diffuse-interface model that describes the motion of two macroscopically immiscible, incompressible, and viscous Newtonian fluids with unmatched densities. This model is compatible with continuum mixture theory. It adopts a mass-averaged (barycentric) velocity so that the two-phase flow is quasi-incompressible: the velocity is no longer divergence-free, and the pressure enters the equation of the chemical potential. For the initial-boundary value problem in $\mathbb{T}^3$ with a class of physically relevant singular free energy densities, we prove the existence of global-in-time weak solutions. The proof relies on a suitable reduction of the original system to a Korteweg-type fluid model combined with a two-layer approximation, together with delicate estimates for the mass density and the phase-field variable inspired by the celebrated Bresch--Desjardins entropy. A key observation is that capillarity at the free interface provides a damping effect on the density evolution. For the limiting procedure, we derive delicate tail estimates to exclude possible concentrations of the singular potential, since no integrability of the pressure is available \textit{a priori}. 
	This work appears to be the first existence result for the Navier--Stokes/Cahn--Hilliard type system with unmatched densities and mass-averaged velocity without spatial regularization. 
\end{abstract}
\maketitle


\section{Introduction}

Multi-phase flows are of fundamental importance in physical, chemical, and biological processes, with a wide range of applications in engineering and industry. Over the decades, diffuse-interface (or phase-field) models have been successfully applied to describe the complex interfacial dynamics of macroscopically immiscible fluid mixtures, including breakup, coalescence, and drop formation \cite{AMW1998,GvdZ2016,LT1998,YPLS2004}. 
The idea, dating back to \cite{VDJ1979}, is to approximate the sharp interface with a thin transition layer (i.e., the diffuse interface) where partial mixing of immiscible fluids is allowed and to introduce a continuous order parameter (i.e., the phase-field variable) based on the volume or mass fraction. The diffuse-interface model avoids explicit tracking of the free interface in its mathematical formulation and provides a convenient way to capture topological changes of the interface \cite{DF2020}. As a typical example of diffuse-interface models for a conserved field, the Cahn--Hilliard equation \cite{CH1958} is a fundamental continuum model for phase separation in binary mixtures, driven by non-Fickian diffusion due to gradients in chemical potential. Describing the motion of macroscopic fluids also requires coupling with a momentum equation, e.g., the Navier--Stokes equations. The first hydrodynamic system consisting of the Navier--Stokes equations and the Cahn--Hilliard equation, i.e., the so-called ``Model H'', was proposed by Hohenberg and Halperin \cite{HH1977}. The ``Model H'' describes the dynamics of incompressible isothermal viscous two-phase flows under the assumption of matched densities, that is, the density of the fluid mixture and the density of the individual constituents are the same constant. Subsequently, extended diffuse-interface models have been developed for incompressible two-phase flows in the general scenario with unmatched densities. These models are based on a suitably defined average velocity field, for instance, the mass-averaged velocity \cite{ADGK2014,LT1998,SYW2013,SRSvBvdZ2018} and the volume-averaged velocity \cite{AGG2012,Boyer2002,Ding2007}. For a comparative study of existing Navier--Stokes/Cahn--Hilliard models for incompressible two-phase flows with density-varied components, we refer to \cite{TKID2023}, where a unified framework based on continuum mixture theory and thermodynamical principles was developed. 

\subsection{Model description} 
In this work, we investigate a diffuse-interface model that describes the motion of two incompressible, immiscible Newtonian fluids of unmatched densities and viscosities \cite{TKID2023} (see also \cite{SRSvBvdZ2018} for an equivalent formulation under a slightly different form of the pressure). For simplicity, we consider the problem in $Q_T:=\mathbb{T}^3\times (0,T)$, where $T > 0$ is a given final time. Let $\rho_1$ and $\rho_2$ be the constant specific densities of the incompressible isothermal constituents. For unmatched densities, we assume that $0<\rho_1<\rho_2=1$ without loss of generality. Next, we introduce the phase-field variable based on the volume fractions $\phi:\mathbb{T}^3\times [0,T]\to [-1,1]$ such that the values $\phi=1$ and $\phi=-1$ represent the pure phases of the first and second constituents, respectively. Then the average density of the binary mixture is given by 
\begin{align}\label{density}
\rho(\phi)= \rho_1\frac{1+\phi}{2}+ \rho_2\frac{1-\phi}{2}
. 
\end{align}
The mixture velocity $\mathbf{u}: \mathbb{T}^3\times [0,T]\to \mathbb{R}^3$ is taken as a mass-averaged velocity (or barycentric velocity) and is identified through the relation
$$
\mathbf{u}=\frac{1}{\rho(\phi)}\left( \frac{1+\phi}{2}\rho_1\mathbf{u}_1+ \frac{1-\phi}{2}\rho_2\mathbf{u}_2\right),
$$
where $\mathbf{u}_i$ denotes the velocity of the $i$-th fluid component ($i=1,2$). Then the hydrodynamic system under investigation reads as follows 
\begin{subequations}
	\label{model2}
	\begin{align}
		\label{model2-1} \partial_t(\rho  \mathbf{u}) + \Div\left (\rho  \mathbf{u}\otimes
		\mathbf{u}\right) + \nabla p& =  \Div S(\phi,\mathbb{ D}\mathbf{u}) - 
		 \phi \nabla \mu, && \text{in} \ Q_T,\\
		\label{model2-2}
        \partial_t\rho + \Div(\rho\mathbf{u})&=0,
        && \text{in} \ Q_T,\\
		\label{model2-3}
		\partial_t\phi + \Div(\phi  \mathbf{u})& =\Delta(\mu+\alpha p),  && \text{in} \ Q_T,\\
		\label{model2-4}
		\mu& = - \Delta \phi + F'(\phi),  && \text{in} \ Q_T.
	\end{align}
    In addition, the system \eqref{model2-1}--\eqref{model2-4} is subject to the initial conditions
	\begin{align}
		\label{model2-5-0}
		\mathbf{u}|_{t=0} =\mathbf{u}_{0},\quad  \phi|_{t=0}=\phi_{0}, 
        \quad\text{in $\bbt^3$}.
	\end{align}
\end{subequations}

In the momentum equation \eqref{model2-1}, $p:\mathbb{T}^3\times [0,T]\to \mathbb{R}$ is the pressure. The matrix-valued function 
\begin{align*}
	S(\phi, \bbd \bu) \coloneqq 2 \nu(\phi) \bbd \bu
	+ \lambda(\phi) (\Div \bu) \,\bbi
	= 2 \nu(\phi) \Big(\bbd \bu- \frac{1}{3} (\Div \bu) \,\bbi\Big) + \Big(\lambda(\phi) + \frac{2 \nu(\phi)}{3}\Big) (\Div \bu)\, \bbi
\end{align*}
denotes the Newtonian stress tensor with  the shear viscosity $ \nu(\phi) > 0 $ and the bulk viscosity $\lambda(\phi)$ satisfying $ \lambda(\phi) + 2 \nu(\phi) / 3 \geq 0 $. Here, $\bbd \bu \coloneqq (\nabla \bu + (\nabla \bu)^\top)/2$ denotes the symmetric gradient of the velocity and $\mathbb{I}$ is the three-dimensional identity matrix. Since the average viscosity of the fluid mixture usually depends linearly on $\phi$ and thus linearly on $\rho$ (cf. \eqref{density}). For simplicity, we take   
\begin{align} 
	\label{eqs:viscosity-relation}
	\nu(\phi) = \frac{1}{2} \rho(\phi), \quad \lambda(\phi) = 0,\quad \text{and}\quad S(\phi,\bbd\mathbf{u})= \rho(\phi)\bbd\mathbf{u}.
\end{align}
The above choice of density-dependent viscosities in  \eqref{eqs:viscosity-relation} is also common in the context of shallow water fluids \cite{BD2003} and compressible fluids \cite{AS2022,BDL2003,MSZ2025,VY2016InventMath,VY2016}. 

In the convective Cahn--Hilliard equation \eqref{model2-3}--\eqref{model2-4}, the constant $\alpha$ is defined by 
\begin{align}
	\alpha \coloneqq \frac{\rho_2 - \rho_1}{\rho_2 + \rho_1}
    \in (0,1).
    \label{def-alpha}
\end{align}
For simplicity, the mobility coefficient is assumed to be one. The scalar function $\mu:\mathbb{T}^3\times [0,T]\to \mathbb{R}$ denotes the chemical potential, which is given by the variational derivative of the Ginzburg--Landau-type free energy: 
\begin{align}
\label{free-evergy}
\mathcal{E}_{\text{GL}}(\phi)= \int_{\mathbb{T}^{3}} \Big( \frac{1}{2}|\nabla \phi|^2+F(\phi)\Big)\, \mathrm dx.
\end{align}
In \eqref{free-evergy}, we have set the width of the diffuse interface to one, since we do not consider the sharp-interface limit in this work.
The nonlinear function $F$ denotes the homogeneous Helmholtz free energy density that usually has a double-well structure: 
\begin{align}
	F(r) = F_c(r) - \frac{\omega}{2} r^2,\quad  \forall\, r \in (-1,1),
    \label{F}
\end{align}
where $F_c$ is a strictly convex function, and the quadratic term gives a concave perturbation with $\omega \geq \inf_{r \in (-1,1)} F_c''(r)$. This condition on $\omega$ accounts for possible phase separation in the binary fluid mixture. 

Returning to \eqref{model2-1}, we note that the second term on its right-hand side, that is, 
\begin{align}
\label{ca-force}
\phi\nabla\mu&=\Div\left(\nabla\phi\otimes\nabla\phi+ \Big(\phi\mu-\frac{1}{2}|\nabla \phi|^2-F(\phi)\Big)\mathbb{I}\right),
\end{align}
models the capillary force due to surface tension acting on the free interface. Next, substituting \eqref{density} into \eqref{model2-2}, we get 
	\begin{align}\notag 
		\partial_{t}\phi +\Div(\phi\mathbf{u}) &=\frac{1}{\alpha}\Div\mathbf{u},\quad \text{in } Q_T,
	\end{align}
which combined with \eqref{model2-3}  yields 
  \begin{align}
  \label{original-weak-quasi}
\Div\mathbf{u}&=\alpha\Delta(\mu+\alpha p),\quad \text{in } Q_T.
  \end{align}
Conversely, from \eqref{density}, \eqref{model2-3} and \eqref{original-weak-quasi}, we can also recover the continuity equation \eqref{model2-2}. Because of  \eqref{original-weak-quasi}, the barycentric velocity $\mathbf{u}$ is in general not divergence-free and the fluid mixture is referred to as quasi-incompressible. In addition, the pressure $p$ can be viewed as a Lagrange multiplier for the constraint \eqref{original-weak-quasi}. Formally, letting $ \alpha \to 0 $ (equivalently $\varepsilon\to 0$ or $ \rho_1 -\rho_2 \to 0$), the system \eqref{model2-1}--\eqref{model2-4} with \eqref{density}, \eqref{original-weak-quasi} reduces to the classical ``Model H'' for two-phase flows of incompressible viscous fluids with matched densities.

The coupled system \eqref{model2-1}--\eqref{model2-4} has several distinct features. First, it provides a framework compatible with the metaphysical principles of mixture theory (see e.g., \cite{Tru1984,TKID2023}). Second, it is thermodynamically consistent and obeys an energy-dissipation law.  The total energy $\cE$ of the system is defined by
\begin{align}\label{Etot}
	\cE(\mathbf{u}, \phi)=
	\int_{\mathbb{T}^{3}}\frac{1}{2}\rho(\phi)|\mathbf{u}|^2\, \mathrm{d} x+ \mathcal{E}_{\text{GL}}(\phi),
\end{align}  
where the free energy $\mathcal{E}_{\text{GL}}$ is given as in  \eqref{free-evergy}. For sufficiently smooth solutions $(\bu,p,\phi,\mu)$, we can check that the total energy \eqref{Etot} satisfies the following energy equality: 
\begin{align}\label{ConEnL}
	\frac{\mathrm d}{\mathrm dt}
	\cE(\mathbf{u}, \phi) 
    &=
	-\int_{\mathbb{T}^{3}}  \rho(\phi)
	\abs{\bbd(\mathbf{u})}^2\,
	\dx
	-\int_{\mathbb{T}^{3}} |\nabla (\mu+\alpha p)|^2\,\dx,
    \quad \forall\, t\in 
    (0,T).
    \end{align}
Moreover, thanks to the periodic boundary conditions for state variables, this system conserves mass and momentum, that is, 
\begin{equation}
	\label{eqs:conservation-mass-momentum}
	\ddt \int_{\bbt^3} \rho \,\dx =\ddt \int_{\bbt^3} \phi \,\dx = 0, \quad
	\ddt \int_{\bbt^3} \rho \bu \,\dx = 0,\quad \quad \forall\, t\in (0,T).
\end{equation}
The balance of energy
and mass 
presented in \eqref{ConEnL} 
and \eqref{eqs:conservation-mass-momentum} 
will play a crucial role in the subsequent analysis of the initial-boundary value problem \eqref{model2}. 

\subsection{State of art}
The analysis of diffuse-interface models for incompressible two-phase flows with unmatched densities has drawn increasing attention in the literature. 

Concerning the thermodynamically consistent diffuse-interface model derived in Abels, Garcke and Gr\"{u}n \cite{AGG2012}, which is based on a volume-averaged velocity,  the existence of global weak solutions has been established in \cite {ADG20131,ADG2013}. In \cite{AGG2024}, the authors further proved global regularity and asymptotic stabilization of global weak solutions as $t \to \infty$ (see also \cite{GP2025} where a different approach for long-time behavior of global weak solutions was introduced). Extensive analysis of local/global strong solutions has been carried out in \cite{AW2021,GA2021,GA2022,GLW2026}. We also mention some recent results on various generalizations of this model, including models with dynamic boundary conditions \cite {GGW2019,GaLW2026,GK2023}, its nonlocal variant \cite {FS1,GGGP2023}, extensions accounting for the interaction with a chemical species \cite{AGG2012,LW2018,GHW2025}, the case with phase transition \cite{AGW2026}, as well as generalizations to incompressible two-phase viscoelastic fluids \cite{LT2025} and incompressible multi-phase flows \cite{AGP2024}, to name a few. 
 
Despite the rich literature on diffuse-interface models based on a volume-averaged velocity, analytical results for models employing a mass-averaged velocity, i.e., for quasi-incompressible two-phase flows, are rather limited. A first attempt was made by \cite{AbelsCMP2009}, where the author proved the existence of global weak solutions for the Navier--Stokes/Cahn--Hilliard system derived in Lowengrub and Truskinovski \cite{LT1998}, under suitable spatial regularization (i.e., via the $p$-Laplacian) and regular potentials. The existence and uniqueness of a local strong solution was subsequently established in \cite{Abels2012}, without the $p$-Laplacian regularization. 
Recently, the first three authors of this study and Han established the existence of global weak solutions to the system \eqref{model2-1}--\eqref{model2-4} in $\mathbb{T}^3\times(0,T)$, again under certain spatial regularization (i.e., via the fractional Laplacian) and regular potentials, see \cite{FFHL2025}, where they also investigated the incompressible limit as the density difference tends to zero. For the existence and uniqueness of a local strong solution, we refer to \cite{FFHL2024}. It is worth mentioning that spatial regularization and regular potential assumptions play an essential role in the aforementioned works \cite{AbelsCMP2009,FFHL2025} on the existence of global weak solutions.

\subsection{Main result} 
Our goal is to establish a first result on the existence of global weak solutions to problem \eqref{model2} for a class of singular potentials, without the need for any spatial regularization.

First, let us introduce the necessary assumptions.
\begin{assumption}\label{main assumption}
The constant $\alpha$ is defined as in \eqref{def-alpha}. The potential function $F$ takes the form \eqref{F} with $\omega\in \mathbb{R}$, where $F_c \in C^2(-1,1) $ is strictly convex such that 
	\begin{align*}
		\lim_{r\to \pm 1} F_c'(r)=\pm \infty, \quad \text{ and } \quad 
		F_c''(r) > 0, \quad \forall\, r \in (-1,1).
	\end{align*}
    There exists some $\sigma_0\in (0,1)$ such that $F_c''$ is non-decreasing in $[1-\sigma_0,1)$ and non-increasing in $(-1,-1+\sigma_0]$.
    In addition, we assume that   
	\begin{align}
		\liminf_{\abs{r} \to 1} \abs{F_c(r) (1-{r^2})^{\beta}} > 0, \quad \text{ for some } \beta\geq \frac{3}{2}.  \label{beta assum}
	\end{align}
\end{assumption}
\begin{remark}\rm
The requirement $\beta \geq 3/2$ in \eqref{beta assum} applies to the three-dimensional case. In two spatial dimensions, this condition can be relaxed to $\beta>1$, see Remark \ref{rem:2D-beta} for further details.
\end{remark}
Next, we present the notion of global weak solutions. 
\begin{definition}\label{def:original-vision}
	Let $T\in (0,\infty)$, $Q_T:=\mathbb{T}^3\times (0,T)$. We call the quadruple $(\bu,\phi,\mu,p)$ a weak solution to problem \eqref{model2} on the time interval $[0,T]$, provided that
	\begin{enumerate}
		\item The quadruple  $(\bu,\phi,\mu,p)$ satisfies
		\begin{align*}
			&\mathbf{u}\in L^\infty(0,T; L^{2}(\mathbb{T}^{3})) \cap L^{2}(0,T; H^{1}(\mathbb{T}^{3})),    
			\\ 
            &\phi \in BC_w([0,T]; H^1(\mathbb{T}^{3})) \cap L^{2}(0,T; H^{2}(\mathbb{T}^{3}))\cap W^{1,\frac43}(0,T;L^2(\mathbb{T}^3)),
            \\ 
            & \phi\in L^\infty(Q_T)\quad \text{with}\quad \absm{\phi(x,t)} < 1\  \text{ a.e. in } Q_T, 
			\\ 
			& \mu+\alpha p \in L^\infty(0,T;H^1(\mathbb{T}^3))\cap L^{2}(0,T; H^{2}(\mathbb{T}^{3})),
            \\
            & \mu \in L^1(Q_T), \quad p \in L^1(Q_T),\quad  F'(\phi) \in L^1(Q_T).
		\end{align*}
		\item For all $\boldsymbol{\varphi}\in C_{c}^{\infty}([0,T); C^{\infty}(\mathbb{T}^{3}))$, it holds 
		\begin{align}
			\nonumber
			&
            - \int_{\mathbb{T}^3} \rho_0\mathbf{u}_0\cdot \bphi(\cdot,0)\,\dx
            -\int_0^{T}\int_{\mathbb{T}^{3}} \rho\bu \cdot\partial_{t}\bphi\,\dx\dt \\
            & \quad \quad
			- \int_0^{T} \int_{\mathbb{T}^{3}}
			(\rho\mathbf{u}\otimes
			\mathbf{u}):\nabla \bphi\,\dx\dt
			+ \int_0^{T}\int_{\mathbb{T}^{3}} \rho \bbd \bu:\bbd \bphi\,\dx\dt
			\notag \\ 
            & 
			\quad = \int_0^{T}\int_{\mathbb{T}^{3}}p\Div\bphi\,\dx\dt +  \int_0^{T}\int_{\mathbb{T}^{3}} (\nabla \phi \otimes \nabla \phi) :\nabla \bphi\,\dx\dt
			+ \int_0^{T}\int_{\mathbb{T}^{3}} \phi\mu\Div\bphi\,\dx\dt
			\nonumber\\
			& \qquad - \int_0^{T}\int_{\mathbb{T}^{3}}\frac{\absm{\nabla \phi}^2}{2}\Div\bphi\,\dx\dt			-\int_0^{T}\int_{\mathbb{T}^{3}} F(\phi)\Div\bphi\,\dx\dt.
			\label{original-weak1}
		\end{align}
		\item  The relation \eqref{density} and equations \eqref{model2-2}--\eqref{model2-4} hold almost everywhere in $Q_T$.
		\item For almost all $t \in (0,T]$, the following energy inequality holds: 
		\begin{align}\label{ConEnL2}
			\cE(\mathbf{u}(t), \phi(t))
			+\int_0^{t}\int_{\mathbb{T}^{3}} \rho \abs{\bbd \bu}^2 \,\dx\d\tau
			+\int_0^{t}\int_{\mathbb{T}^{3}} |\nabla (\mu+\alpha p)|^{2} \,\dx\d\tau
			\leq \cE(\mathbf{u}_{0},\phi_{0}).
		\end{align}
		\item For almost all $t \in (0,T]$, the following Bresch--Desjardins entropy estimate holds:
		\begin{align}
			& E_{\mathrm{BD}}(\mathbf{u}(t), \rho(t)) 
			+\int_0^{t}\int_{\mathbb{T}^{3}} \rho \abs{\bbd \bu}^2 \,\dx\d\tau
			+\int_0^{t}\int_{\mathbb{T}^{3}} |\nabla (\mu+\alpha p)|^{2} \,\dx\d\tau
            \notag\\
            & \quad  + \left(\frac{\alpha+1}{\alpha}\right)^2 \int_0^t 
			\int_{\mathbb{T}^{3}} \abs{\Delta \rho}^{2} \mathrm{d}x\d\tau
			\leq C_T,
            \label{BD-entropy-original} 
		\end{align} 
		where 
        \begin{align*}
			E_{\mathrm{BD}}(\bu,\rho)
			\coloneqq \int_{\bbt^3} 
			\Big(
			\frac{\rho}{2} \abs{\bu + \nabla \ln \rho}^2 
            +  \frac{1}{2}\abs{\nabla \phi (\rho)}^2 + F(\phi(\rho))		
			\Big)\,\dx,
		\end{align*}
        and $C_T>0$ is a constant depending on $\cE(\bu_0,\phi_0)$, $\inf_{x\in \mathbb{T}^3}\rho(\phi_0(x))$ and $T$. 
		\item The initial condition \eqref{model2-5-0} is satisfied in the following sense  
		\begin{align}
			& \phi|_{t=0}=\phi_{0},\quad  \text{ a.e.~in }\ \mathbb{T}^{3},
            \notag 
		\end{align}	
        while $\mathbf{u}_0$ is attained in the weak formulation \eqref{original-weak1}.
	\end{enumerate}
\end{definition}
%
%

Now we are in a position to state the main result. 
\begin{theorem}[Existence of global weak solutions]
	\label{thm:main}
    Let $T\in (0,\infty)$ be arbitrarily fixed final time. Suppose that Assumption \ref{main assumption} is satisfied. 
    For any initial data $(\mathbf{u}_0,\phi_0)$ satisfying 
    $$
    \bu_{0}\in L^2(\bbt^3),\quad \phi_{0}\in H^1(\bbt^3) \quad \text{and}\quad F(\phi_{0})\in L^1(\bbt^3),
    $$
    problem \eqref{model2} admits a global weak solution $(\bu,\phi,\mu,p)$ on $[0,T]$ in the sense of Definition \ref{def:original-vision}. Moreover, we have 
	\begin{align*}
		\int_{\bbt^3} \phi(\cdot,t) \,\dx = \int_{\bbt^3} \phi_0 \,\dx, 
        \quad \forall\, t\in [0,T].
	\end{align*} 
\end{theorem}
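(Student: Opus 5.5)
The plan follows the strategy announced in the abstract. First we reduce \eqref{model2} to a Korteweg-type compressible system, then build solutions by a two-layer approximation, derive energy and Bresch--Desjardins (BD) bounds uniform in the approximation parameters, and finally pass to the limit.

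\textbf{Reduction and approximation.} Since $\rho$ is affine in $\phi$ by \eqref{density}, we have $\phi=\phi(\rho)$. We regard $\rho$ as the primary unknown and set $q:=\mu+\alpha p$. Using \eqref{original-weak-quasi}, $\Div\bu=\alpha\Delta q$, the continuity equation becomes $\partial_t\rho+\Div(\rho\bu)=0$ with $\rho\in[\rho_1,1]$. The pressure is eliminated via $p=(q-\mu)/\alpha$, where $\mu=-\Delta\phi(\rho)+F'(\phi(\rho))$. The momentum equation then reads
\[
\partial_t(\rho\bu)+\Div(\rho\bu\otimes\bu)-\Div(\rho\bbd\bu)+\tfrac1\alpha\nabla q=\tfrac1\alpha\nabla\mu-\phi\nabla\mu ,
\]
which is a Korteweg capillarity term with a singular pressure-like contribution. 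The Cahn--Hilliard equation \eqref{model2-3} becomes a parabolic relation $\Delta q=\alpha^{-1}\Div\bu$ coupled to $\rho$.

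For the two-layer approximation we proceed as follows. In the inner layer we regularize the singular potential by $F_\varepsilon$, with $F_{c,\varepsilon}$ convex, $C^2$ and of quadratic growth. In the outer layer we add an artificial hyperviscosity or Faedo--Galerkin truncation of $\bu$ together with a small diffusion $\delta\Delta\rho$ in the continuity equation. At the Galerkin/$\delta$ level we solve by a fixed-point argument (Schauder), using the energy identity \eqref{ConEnL} to obtain global existence.

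\textbf{Uniform estimates.} The energy equality gives $\bu$, $\sqrt\rho\,\bbd\bu$, $\nabla q$ and $\nabla\phi$ in the spaces of Definition~\ref{def:original-vision}, together with $F_\varepsilon(\phi)\in L^\infty L^1$; Korn's inequality and $\rho\ge\rho_1$ then give $\bu\in L^2H^1$. Next comes the BD entropy. We test the momentum equation with $\nabla\ln\rho$ and use the identity $\partial_t\nabla\ln\rho+\ldots$, which works because $\nu=\rho/2$. This yields control of $\sqrt\rho\,\nabla\ln\rho$ and, crucially, produces the term
\[
\int \tfrac1\alpha\nabla(\mu-q)\cdot\nabla\ln\rho \;\sim\; \int\frac{\alpha+1}{\alpha}\,\bigl(|\Delta\phi|^2+F_c''(\phi)|\nabla\phi|^2\bigr),
\]
up to lower-order terms. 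This is the damping provided by capillarity, and it gives $\Delta\rho\in L^2(Q_T)$ as in \eqref{BD-entropy-original}. The remaining cross terms involving $\nabla q\cdot\nabla\ln\rho$ are absorbed using $\nabla q\in L^2$. From these bounds we obtain $\phi\in L^2H^2$, an estimate on $\partial_t\phi$ in $L^{4/3}L^2$ from \eqref{model2-3}, and, by elliptic estimates for $\Delta q=\alpha^{-1}\Div\bu$, the bound $q\in L^\infty H^1\cap L^2H^2$.

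\textbf{Passage to the limit and main obstacle.} Aubin--Lions compactness gives strong convergence of $\phi$ in $L^2H^1$, of $\rho\bu$, and of $\bu$ via the standard compressible argument, since $\rho$ is bounded below. The convective and Korteweg terms then pass to the limit. The main difficulty is that only $q=\mu+\alpha p$ is controlled, so there is no a priori integrability of $p$ or $\mu$ separately, and $F'_\varepsilon(\phi)$ might concentrate near $|\phi|=1$.

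To handle this, we first test \eqref{model2-4} with $\phi-\bar\phi$, which gives a bound on $\int F'(\phi)(\phi-\bar\phi)$ and hence on $F'\in L^1$, provided $\bar\phi$ stays strictly inside $(-1,1)$. Second, to exclude concentrations we derive tail estimates. We test the BD-type identity with cut-offs localized on $\{1-|\phi|<\eta\}$, using the monotonicity of $F_c''$ near $\pm1$ and the growth condition \eqref{beta assum} with $\beta\ge3/2$ combined with the Sobolev embedding $H^2\subset L^\infty$ in 3D. The goal is to show that $\|F_\varepsilon'(\phi_\varepsilon)\mathbf 1_{\{1-|\phi_\varepsilon|<\eta\}}\|_{L^1(Q_T)}\to0$ as $\eta\to0$, uniformly in $\varepsilon$. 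This gives equi-integrability, hence weak $L^1$ convergence of $F'$, of $\mu$ and therefore of $p=(q-\mu)/\alpha$, as well as $|\phi|<1$ a.e.; I expect this to be the crux. Finally, weak lower semicontinuity yields \eqref{ConEnL2} and \eqref{BD-entropy-original}, and mass conservation follows by integrating \eqref{model2-3}.
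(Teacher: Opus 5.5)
\textbf{Gap 1: the $L^1$ bound on $F'(\phi)$.} Testing \eqref{model2-4} with $\phi-\bar\phi$ gives $\int F'(\phi)(\phi-\bar\phi)\,\dx=\int\mu(\phi-\bar\phi)\,\dx-\int|\nabla\phi|^2\,\dx$. In the classical Cahn--Hilliard argument the first term on the right is controlled through $\|\nabla\mu\|_{L^2}$. Here the dissipation controls only $\nabla q=\nabla(\mu+\alpha p)$. Writing $\mu=q-\alpha p$ leaves $\alpha\int p(\phi-\bar\phi)\,\dx$, and nothing is known about $p$. So this step is circular, and it is exactly the obstruction the paper singles out.

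Your tail estimate is also not an argument as stated. The BD balance controls $|\Delta\phi|^2$ and $F''(\phi)|\nabla\phi|^2$, not $F'(\phi)$ itself, and ``testing it with cut-offs'' gives no mechanism to convert one into the other.

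The missing idea is that in the Korteweg reformulation the singular part of $F$ appears as a pressure: $\zeta\rho\nabla F'(\phi)=-\nabla\widetilde P(\rho)$ with $\widetilde P(\rho)=\rho\widetilde F'(\rho)-\widetilde F(\rho)$. Meanwhile $\frac1\alpha\nabla q=\frac1{\alpha^2}\nabla\Delta^{-1}\Div\bu$ is only a damping term. The paper tests the momentum equation with $\psi(t)\cB[\rho-\bar\rho]$, where $\cB=\nabla\Delta^{-1}$ and $\bar\rho$ is the spatial mean.
\begin{itemize}
\item Every other term is bounded by the energy and by the BD consequence $\rho\in L^2(0,T;H^2)$.
\item Since $\widetilde P_c(\rho)$ and $\rho-\bar\rho$ have the same sign near both endpoints, this yields $\widetilde P(\rho)\in L^1(Q_T)$. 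Hence $F'(\phi)\in L^1$, and then $\mu,p\in L^1$.
\item Equi-integrability comes from the same device with $\psi(t)\cB[\chi(\rho)-\bar\chi]$, where $\chi(r)=\ln\frac{r-\underline{\rho}}{1-r}$. This bounds $\widetilde P(\rho)\chi(\rho)$ uniformly in $L^1(Q_T)$.
\item The condition $\beta\ge 3/2$ enters there, to have $\chi'(\rho)\in L^{5/2}(Q_T)$ when estimating $\cB[\partial_t\chi(\rho)]$ and $\sqrt\delta\,\nabla\chi(\rho)$.
\end{itemize}

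\textbf{Gap 2: the approximation scheme gives no uniform density bound.} Once $F$ is replaced by a regular $F_\varepsilon$, nothing keeps $\rho$ in $[\rho_1,1]$ or away from vacuum. A diffusion $\delta\Delta\rho$ with only $\bu\in L^2(0,T;H^2)$ gives no uniform $L^\infty$ bound, and Galerkin bounds are not uniform in $N$. Yet you use $\rho\ge\rho_1$ for Korn's inequality, and you need a uniform positive lower bound to test with $\nabla\ln\rho$.

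The paper handles this as follows.
\begin{itemize}
\item It adds $\delta\rho\nabla\Delta^2\rho$ and a steep confining term $\delta\rho\nabla W_\delta'(\rho)$, so that $\int\bigl(\frac\delta2|\Delta\rho|^2+\delta W_\delta(\rho)\bigr)\,\dx$ enters the energy.
\item Lemma~\ref{lower-and-up-bounds-for-rho-sigma} (via $H^2\hookrightarrow C^{1/2}$) then gives $\frac12\underline\rho<\rho<1+\frac12\underline\rho$ uniformly in $\sigma$.
\item It also adds $\delta\nabla\rho\cdot\nabla\bu$, so that $\delta\Delta\rho$ does not spoil the energy balance.
\item The order of limits matters. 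First $\sigma\to0$ at fixed $\delta$, using the same Bogovski\u{\i} bounds (now $\delta$-dependent), to obtain $\rho_\delta\in(\underline\rho,1)$ a.e. Only then $\delta\to0$, with the BD estimate.
\end{itemize}

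Finally, since $\mu$ is only in $L^1$, the limit in $\phi\nabla\mu$ must be taken in the stress form \eqref{ca-force}, as in \eqref{original-weak1}. Your formulation does not provide for this.
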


In what follows, we give some comments on the problem setting and the main theorem.

\begin{remark}[The singular potential function $F$]
\rm 
A physically relevant example of $F$ is the logarithmic potential \cite{CH1958}:
\begin{align}
    F(r)=F_c(r)-\frac{\omega}{2}s^2
    =(1+r)\ln (1+r) + (1-r)\ln(1-r)-\frac{\omega}{2}r^2,\quad r\in (-1,1).\label{log}
\end{align}
Unfortunately, the logarithmic potential \eqref{log} does not satisfy Assumption \ref{main assumption} due to its weak singularity at the pure phases $\pm 1$. 
In a recent work \cite{AGW2026} on incompressible two-phase flows with phase transitions, the authors considered the stationary Stokes equations coupled to a Cahn--Hilliard equation with a logarithmic potential. This system serves as a simplified version of the hydrodynamic model derived in \cite{ADGK2014}, which is based on a mass-averaged velocity. However, the argument therein essentially depends on the stationary structure, which provides explicit regularity for the pressure directly, and therefore does not apply to our case. The existence of global weak solutions to problem \eqref{model2} with a logarithmic-type singular potential remains an interesting and challenging problem.

The singularity of $F$ (or its derivative) near the pure phases $\pm 1$ ensures that the phase-field variable $\phi$ remains within the physical interval $[-1,1]$ (see e.g., \cite{GGW2018,LP2018,MZ2004}), thereby preventing degeneracy of the density $\rho$ (see e.g., \cite{ADG20131,AGW2026,FS1,GA2021, GHW2025}). Here, we also require sufficiently strong singularity of $F$ to rule out possible concentrations of the pressure, see Section \ref{sec:equi-integrability} for further details. A similar consideration arises when studying the strict separation property of the phase-field variable from the pure states for the three-dimensional Cahn--Hilliard equation \cite{GP2024,LP2018,MZ2004}, see also \cite{FLM2016} for a hydrodynamic system for quasi-incompressible fluids. 
A singular potential with power-law growth near $\pm 1$, for example, $F_c(\phi) = (1-\phi^2)^{-\beta}$, is admissible under Assumption \ref{main assumption}. The range of the exponent $\beta$ is a technical assumption, and finding its sharp bound is an interesting open question. On the other hand, this type of singular potential is motivated by the van der Waals equation of state (i.e., the Lennard–Jones potential), which appears in the gas dynamics of hard-sphere systems via the Carnahan--Starling equation \cite{CS1969,FLM2016,FPRS2010,FZ2010}. 
Such a potential is referred to as cold pressure in the compressible fluid community, where it is used to characterize the phenomenon of plastification (cf.~\cite{BD2007,GLV2015}). Further applications can be found in the study of chemically reactive flows \cite{FLM2016,MPZ2013}. 
\end{remark}

\begin{remark}[Weak formulation of the momentum equation]
\rm 
Due to the poor regularity of the chemical potential $\mu$ (only $L^1(Q_T)$-integrability), we have used an alternative weak formulation \eqref{original-weak1} for the momentum equation \eqref{model2-1} through the relation \eqref{ca-force}. In the context of diffuse-interface models based on a volume-averaged velocity (see, e.g., \cite{ADG20131,AGG2012,GGW2019,LT2025}), the incompressibility condition $\Div \mathbf{u}=0$ holds. Under this property, we can replace the force term $\phi\nabla \mu$ by $\Div(\nabla\phi\otimes \nabla \phi)$, because the original pressure $p$ can be modified to $p - \phi \mu + \absm{\nabla \phi}^2/{2}+ F(\phi)$. We also note that the regularity property $p \in L^1(Q_T)$ is new in the context of diffuse-interface models based on a mass-averaged velocity (cf. \cite{AbelsCMP2009,FFHL2025}). In addition, it is a nontrivial task to obtain compactness of the pressure in $L^1(Q_T)$ during the limit passage.
\end{remark}

\begin{remark}[The continuity equation and the Cahn--Hilliard equation]\rm 
Due to the special structure \eqref{original-weak-quasi}, within the framework of weak solutions, the modified chemical potential 
$\mu_p=\mu+\alpha p$ enjoys much better regularity than the original chemical potential $\mu$ in the Navier–Stokes--Cahn--Hilliard system based on the volume-averaged velocity (see e.g., \cite{ADG20131,AGP2024,AGW2026,GLW2026}). This yields better regularity for $\phi$ (resp. $\rho$), so that the Cahn--Hilliard equation \eqref{model2-3} (resp. the continuity equation \eqref{model2-2}) is satisfied in the strong sense, and the initial condition of $\phi$ (resp. $\rho$) is attained almost everywhere in $\mathbb{T}^3$. 
\end{remark}

\begin{remark}[The Bresch--Desjardins entropy estimate]
\rm
Inspired by \cite{BD2003} on the viscous shallow-water equations (see also \cite{BDL2003}), we include a Bresch--Desjardins entropy (BD-entropy in short) estimate \eqref{BD-entropy-original} in Definition \ref{def:original-vision} for weak solutions. This is new in the context of diffuse-interface models for two-phase flows. Since the basic energy inequality \eqref{ConEnL2} does not provide sufficient estimates for the approximate solutions, we cannot obtain the compactness needed to pass to the limit. For example, no information on the pressure $p$ and the chemical potential $\mu$ is available. With the aid of the BD-entropy estimate, we can obtain supplementary regularity and compactness of the density function $\rho$, and thus possibly also for $\phi$ and $p$. The BD-entropy estimate reveals that the capillary force term $\phi\nabla\mu$ provides an additional damping effect for the density evolution, resulting in better regularity from a mathematical point of view, see Lemma \ref{lem:BD-entropy} for further details. This point was previously observed in \cite{FFHL2025} for the system \eqref{model2-1}–\eqref{model2-4} with spatial regularization and regular potentials, and it turns out to be a distinct feature compared to other Navier--Stokes/Cahn--Hilliard type systems. 
\end{remark}

\begin{remark}[Requirement on the initial data]
\rm
Our assumption on the initial data $(\mathbf{u}_0,\phi_0)$ implies that the initial total energy $\mathcal{E}(\mathbf{u}_0,\phi_0)$ is finite. Due to the singularity of $F$ at $\pm 1$, we infer from the condition $F(\phi_0) \in L^1(\bbt^3)$ that $\phi_0\in (-1,1)$ almost everywhere in $\mathbb{T}^3$. This property combined with the assumption 
$\phi_0 \in H^1(\bbt^3)$ and the relation $\rho_0 =\rho(\phi_0)$ (cf. \eqref{density}) yields that 
$$
\rho_0 \in H^1(\bbt^3)\quad \text{and}\quad 0 <\underline{\rho}:= 1-\frac{2}{\zeta} < \rho_0 < 1,\quad \text{with}\ \ \ \zeta:=\frac{\alpha+1}{\alpha}
>2,
$$ 
from which we find 
\begin{align*}
	0\leq \int_{\bbt^3} \rho_0|\nabla \ln \rho_0|^2\,\mathrm{d}x=  \int_{\bbt^3} \frac{\absm{\nabla\rho_0}^2}{\rho_0}\, \dx 
	\leq \frac{1}{\zeta(\zeta - 2)} \int_{\bbt^3} \absm{\nabla\phi_0}^2\, \dx.
\end{align*}
As a consequence, the initial BD-entropy 
$E_{\mathrm{BD}}(\bu_0,\rho_0)$ is also bounded. 
\end{remark}

\subsection{Strategy of proofs}
For diffuse-interface models of quasi-incompressible two-phase flows formulated in terms of a mass-averaged velocity, the available results in the literature on global weak solutions rely heavily on spatial regularization and regular potentials, as seen in \cite{AbelsCMP2009,FFHL2025}. 
In this work, our aim is to handle more general cases involving a class of singular potentials without any spatial regularization. The main challenge arises from the poor regularity of the chemical potential $\mu$ and the pressure $p$. The energy-dissipation law \eqref{ConEnL} provides information only on their linear combination $\mu + \alpha p$, but not on each of them separately. This further prevents us from obtaining useful estimates on the phase-field variable $\phi$ and the singular term $F'(\phi)$ from \eqref{model2-4}, in contrast to the classical Cahn--Hilliard equation and its variants  \cite{AGG2024,GP2024,GGW2018,LP2018,MZ2004}.

Our strategy is to exploit hidden information from the momentum equation \eqref{model2-1}. Inspired by \cite{FFHL2025}, we eliminate the pressure $p$ in \eqref{model2-1} using the quantity 
\begin{equation}
\mu_p := \mu + \alpha p = - \Delta \phi + F'(\phi) + \alpha p.
\label{muap}
\end{equation}
This allows us to reduce the original Navier--Stokes/Cahn--Hilliard system \eqref{model2-1}--\eqref{model2-4} to a Korteweg-like fluid model in terms of the velocity $\mathbf{u}$ and the density $\rho$, see~\eqref{eqs:reduced} below. 
Thanks to the third-order term $\nabla \Delta \rho$ in the capillary force, we can obtain regularity information without directly invoking \eqref{model2-4} for the chemical potential, cf.~\cite[Lemma 4.5]{FFHL2025}. This shares similar properties with the classical compressible Navier--Stokes--Korteweg system for liquid-vapor flows, see Section \ref{sec:reformulation} for further details. 


With the aforementioned reformation, it remains challenging to handle each term in the reduced momentum equation, for example, the newly defined pressure $\widetilde{P}(\rho)$ as in \eqref{eqs:new-entropy} and the higher-order force term $\rho \nabla \Delta \rho$. This difficulty stems from the singular potential under consideration, which exhibits algebraic growth near $\pm 1$ and for which we lack sufficient \textit{a priori} information. By choosing a suitable test function in terms of the density $\psi(t) \cB \big[\zeromean{\rho}\big]$ for $\psi(t) \in C_c^\infty((0,T))$, where $\mathcal{B}$ is the celebrated Bogovski\u{\i} operator (see \eqref{eqs:Bogovskii}), we can derive the $L^1$-integrability of $\widetilde{P}(\rho)$; see also \cite{FP2000,FZ2010} in the context of compressible fluids. Furthermore, inspired by \cite{BD2003,BDL2003}, we can exploit the third-order capillary force term $\rho\nabla \Delta \rho$ to obtain better regularity properties for the density function using the BD-entropy estimate (a formal argument can be found in Lemma \ref{lem:BD-entropy} below). We note that the BD-entropy estimate does not rely on a positive lower bound of $\rho$ and even allows for the vacuum state. Thanks to this estimate, we can obtain $H^2$-regularity for the density $\rho$, and consequently for the phase-field variable $\phi$ in view of \eqref{density}. This in turn yields an estimate for the pressure $p$ in $L^1(Q_T)$, cf. Lemma~\ref{lem:uniform-P-delta} below.

Let us now comment on the approximation scheme. As noted above, convergence from an approximate system of the problem \eqref{model2} to itself is a non-trivial task due to the lack of regularity and compactness for the pressure. To construct a suitable approximate system for \eqref{model2} that is convenient to solve and possesses favorable regularity properties for its solution, we introduce a family of regular potentials $F_\sigma$ with $0<\sigma\ll 1$ that appropriately approximates the singular potential $F$, following the approach for the standard Cahn--Hilliard equation with singular potentials (cf.~\cite{GGW2018}). For problem \eqref{model2} with regular potentials, the existence of global weak solutions was established in \cite{FFHL2025}, under a spatial regularization involving a fractional Laplacian (which is essential in the proof therein and cannot be removed). 
Hence, a natural idea is to add a fractional Laplacian term $\delta \Lambda^{2s} \phi$ to the chemical potential equation \eqref{model2-4} and then take the limit as $\delta \to 0$.  Here, $0 < \delta \ll 1$, and $\Lambda^{2s}$ (for some $s > 3/2$) denotes the fractional Laplacian defined via the Fourier transform on the torus $\mathbb{T}^3$.

However, the limiting procedure is non-trivial. First, the result in \cite{FFHL2025} did not provide sufficient information on the pressure, which is the Lagrange multiplier corresponding to \eqref{original-weak-quasi}. Instead, a weaker notion of solutions involving a modified pressure term was adopted. The argument therein does not apply to the current setting because it crucially relies on the phase-field variable being confined to a finite range, namely $\phi \in (-1-\theta, 1+\theta)$ \textit{a priori} for some small constant $\theta > 0$. This confinement ensures that the density $\rho$ admits a uniformly positive lower bound. Unfortunately, the finite range of $\phi$ depends on the regularization term $\delta \Lambda^{2s} \phi$ (see \cite[Lemma A.5]{FFHL2025}) and is therefore not uniform with respect to the regularization parameter $\delta$. Second, as noted above, one may exploit the compressible Korteweg structure to derive the BD-entropy estimate. With the regularization term $\delta \Lambda^{2s} \phi$, one can obtain higher-order regularity, such as $\phi \in L^{2}(0,T; H^{s+1}(\mathbb{T}^{3}))$, from the BD-entropy estimate, which, however, is not uniform at the level of weak solutions. Consequently, one cannot employ the BD-entropy estimate to derive the integrability of the singular term  $F'(\phi)$ and hence the integrability of the pressure $p$.

To fully exploit the capillary structure, we approximate the original problem \eqref{model2} and then examine a reformulated equivalent system of Navier--Stokes--Korteweg type. This allows us to derive improved regularity and compactness properties for the equivalent approximate problem and to establish convergence in a proper sense. Our approximate scheme is inspired by \cite{AS2022,MSZ2025,VY2016} on the reduction of a Navier--Stokes--Korteweg system for compressible fluids. More precisely, it contains a two-layer approximation characterized by parameters $\sigma, \delta>0$, which reads as follows
\begin{subequations}
	\begin{align*}
		\begin{split}
			& \partial_t (\rho \bu) 
			+ \Div(\rho \bu \otimes \bu) 
			+ \nabla \widetilde{P}_\sigma(\rho) 
			+ \delta \rho \nabla W_\delta'(\rho)
			- \Div (\rho \bbd \bu)\\
			& \qquad \qquad = 
			-\delta \rho \nabla \Delta^2 \rho
			- \delta \nabla \rho \cdot \nabla \bu
			- \delta \Delta^2 \bu
			+ \zeta^2 \rho \nabla \Delta \rho
			- \frac{1}{\alpha^2} \nabla \Delta^{-1} \Div \bu,
		\end{split}
		\\
		& \pt \rho + \Div (\rho \bu) = \delta \Delta \rho,\\
		& \mathbf{u}|_{t=0} =\mathbf{u}_{0},\quad  \rho|_{t=0}=\rho_{0,\delta}.
	\end{align*}
\end{subequations}
Here, we have introduced several approximation terms that play different roles in the subsequent analysis: 
\begin{itemize}
	\item The newly defined singular pressure $\widetilde{P}$ is approximated by $\widetilde{P}_\sigma$, such that $\widetilde{P}_\sigma$ becomes regular near $\underline{\rho}$ and $1$, i.e., the endpoints of $\rho$. This approximation follows the standard expansion near the endpoints used in phase-field models. We note that the construction of $\widetilde{P}_\sigma$ is independent of $\delta$.
	\item The artificial regular potential $W_\delta$ leads to an $L^\infty$-estimate for $\rho$. It is designed to be sufficiently steep outside the interval $(\underline{\rho}, 1)$. This enables us to derive strictly positive upper and lower bounds for $\rho$ that extend slightly beyond the original interval $(\underline{\rho}, 1)$. In particular, these bounds are uniform with respect to $\sigma$. The construction of $W_\delta$ is independent of $\sigma$. 
	\item The terms $\delta \rho \nabla \Delta^2 \rho $, $\delta \Delta^2 \bu $ provide additional regularity properties of $(\bu, \rho) $ so that the testing procedure for the BD-entropy estimate makes sense.
	\item The artificial viscous term $\delta \Delta \rho$ in the continuity equation for $\rho$ enables us to apply the classical Faedo–Galerkin approximation scheme to construct solutions of the regularized system. The additional term $\delta\nabla \rho \cdot \nabla \bu$ in the momentum equation for $\mathbf{u}$ is introduced to complement this artificial viscous term, thus facilitating the derivation of necessary energy estimates.
\end{itemize}
In principle, these additional terms should be equipped with different parameters, however, in our case, we can let them vanish simultaneously, except for $\sigma > 0$. Compared to the constructions in \cite{AS2022,MSZ2025,VY2016}, we do not need a regularization like $ \delta \rho \nabla \left(\frac{\Delta \sqrt{\rho}}{\sqrt{\rho}}\right) $ due to the Bohm potential of quantum fluids. In fact, we are working with a singular potential that guarantees the density remains bounded and kept away from vacuum, so a term like $\nabla \rho^{-k}$ ($k \in \mathbb{Z}^+$) is not necessary here either. It is worth mentioning that, in the limit, we indeed establish the existence of weak solutions to a compressible Navier--Stokes--Korteweg system with singular pressure and density-dependent viscosity, see Theorem \ref{thm:NSK} below.

The limiting procedure for the aforementioned approximate system consists of two steps: first, we let $\sigma \to 0$ with $\delta > 0$ fixed (see Section \ref{sec:limit-sigma}), and then we send $\delta \to 0$ (see Section \ref{sec:limit-delta}). In this way, we obtain a limit system that is equivalent (in a weak sense) to the weak formulation \eqref{original-weak1} of the original problem \eqref{model2}, thus establishing the existence of global weak solutions. We note that the $L^1$-integrability of $\widetilde{P}_{\sigma}$ generally does not imply weak convergence. Inspired by \cite{FLM2016,FZ2010}, we establish the weak compactness in $L^1(Q_T)$ with the aid of the Dunford--Pettis theorem, by proving the equi-integrability. This goal is achieved by controlling the tail part of the integral to exclude possible concentrations, thanks to the BD-entropy estimate, see Lemmas \ref{lem:uniform-P-chi-sigma}, \ref{lem:uniform-P-chi} below. Then the weak $L^1$-convergence of $\widetilde{P}_{\sigma}$ gives rise to the corresponding weak $L^1$-convergence of the chemical potential $\mu$ and the pressure $p$, which ultimately justifies the weak formulation \eqref{original-weak1}.

We believe that our arguments will prove useful for the study of related diffuse-interface models for quasi-incompressible two-phase flows. In particular, they may provide a compatible framework for the analysis of the Lowengrub--Truskinovsky model \cite{LT1998} and the model proposed by Aki et al.~\cite{ADGK2014} for quasi-incompressible binary fluids with phase transition, for which the existence of global weak solutions remains open.

\subsection{Plan of the paper}
In Section \ref{sec:preliminaries}, we describe the necessary notation, present the formal reduction, and derive the \textit{a priori} estimates. Section \ref{sec:approximate-system} addresses the regularized problem, its finite-dimensional approximation, and the associated energy estimates. Then we prove the existence of a global weak solution to the regularized system. In Section \ref{sec:limit-sigma}, we pass to the limit in the approximated potential and pressure, thereby obtaining a global weak solution to an intermediate limit system with an artificial viscous term and other regularization terms. The compactness of the newly defined pressure is established via the equi-integrability. Section \ref{sec:limit-delta} deals with the limit procedure as $\delta \to 0$. We derive the BD-entropy estimate and obtain improved uniform estimates in Section \ref{sec:uniform-BD-app}. Then we establish the integrability and equi-integrability in Sections  \ref{sec:integrability-pressure-delta}, 
\ref{sec:equi-integrability}, respectively, using the additional regularity properties  provided by the BD-entropy estimate. Finally, the existence of global weak solutions to the original problem \eqref{model2}, along with the weak equivalence to the reformulated problem, is demonstrated in Section \ref{sec:limit-final}. In Appendix \ref{sec:technical-lemma}, we report some properties of the mollifiers on $\mathbb{T}^3$.

\section{Preliminaries}
\label{sec:preliminaries}

In this section, we first introduce the functional settings and the notation. Next, we reformulate the problem \eqref{model2} and derive some \textit{a priori} estimates for sufficiently smooth solutions.

\subsection{Notation and function spaces}
We denote $\mathbf{a}\otimes \mathbf{b}=(a_i b_j)^3_{i,j=1}$ for $\mathbf{a}, \mathbf{b}\in \mathbb{R}^3$. For any $q\in [1,\infty]$, $L^q(\mathbb{T}^3)$ denotes the Lebesgue space with norm $\|\cdot\|_{L^q(\mathbb{T}^3)}$. 
For $k\in\mathbb{Z}^+$, $q\in[1,\infty]$, $W^{k,q}(\mathbb{T}^3)$ denotes the Sobolev space with norm $\|\cdot\|_{W^{k,q}(\mathbb{T}^3)}$, and the spaces with negative order are defined by duality $W^{-k,q}(\mathbb{T}^3) = (W^{k,q'}(\mathbb{T}^3))'$, with $1/q +1/q'=1$. For $q=2$, we set $H^k(\mathbb{T}^3)=W^{k,2}(\mathbb{T}^3)$. For any $f\in L^1(\mathbb{T}^3)$, $\mean{f}$ denotes the mean value of $f$ in $\mathbb{T}^3$. Then we have the Poinca\'{e}--Wirtinger inequality
$$
\Big\|f-\mean{f}\Big\|_{L^2(\mathbb{T}^3)}\leq C_P\|\nabla f\|_{L^2(\mathbb{T}^3)},\quad \forall\, f\in H^1(\mathbb{T}^3), 
$$
where $C_P>0$ is independent of $f$. 

Given a Banach space $X$ with norm $|\cdot|_{X}$, we denote its dual space by $X'$ and the duality product by $\langle \cdot,\cdot\rangle_{X',X}$. For a Hilbert space $H$, we denote its inner product by $(\cdot,\cdot)_{H}$. When no ambiguity arises, we do not distinguish notationally between scalar-, vector-, or matrix-valued function spaces. 
Given an interval $J\subseteq [0,\infty)$, $L^q(J;X)$ with $q\in[1,\infty]$ denotes the space consisting of Bochner measurable $q$-integrable functions
with values in $X$. 
For $q\in[1,\infty]$, $W^{1,q}(J;X)$ is the space of all $f\in L^q(J;X)$ with $\partial_t f\in L^q(J;X)$.
For $q=2$, we set $H^1(J;X)=W^{1,2}(J;X)$. The set of continuous functions $f : J \to X$ is denoted by $C(J;X)$. In addition, $BC(J; X)$ is the Banach space of all bounded and continuous functions $f : J \to X$ equipped with the supremum norm, while $BC_w(J; X)$ denotes the topological vector space of all bounded and weakly continuous functions $f : J \to X$. Finally, for $J=(0,T)$, $T\in (0,\infty)$, we denote by $C^\infty_c(J; X)$ the space of all smooth functions $f : J \to X$ with $\mathrm{supp} f \subset\subset J$.


Throughout the manuscript, $C$ denotes a generic positive constant whose value may vary from line to line. The specific dependence of $C$ will be explained if necessary. For $A,B\geq 0$, the notation $A\lesssim B$ (resp. $A \gtrsim B$) represents inequalities of the form $A \leq CB$ (resp. $A \geq CB$) for some constant $C>0$, and $A\approx B$ means $B\lesssim A\lesssim B$. 

\subsection{Model reduction}
\label{sec:reformulation}

We rewrite the original system \eqref{model2-1}--\eqref{model2-4} in a suitable form that reveals a structure distinct from the classical Navier--Stokes/Cahn--Hilliard system. The resulting Korteweg-type system plays a crucial role in the subsequent analysis.

Using the linear relation \eqref{density} and the continuity equation \eqref{model2-2}, we rewrite the momentum equation \eqref{model2-1} as
\begin{align*}
	\rho (\pt \bu + \bu \cdot \nabla \bu)  + \frac{1}{\alpha} \nabla \mu_p = \Div(\rho \bbd \bu) + \zeta \rho \nabla \mu,
\end{align*}
where $\zeta=\frac{\alpha+1}{\alpha}$ and $\mu_p=\mu+\alpha p$ (see \eqref{muap}). Using \eqref{model2-4}, we further get a Navier--Stokes--Korteweg type equation
\begin{align}
	\label{eqs:momentum-reduced}
	\rho (\pt \bu + \bu \cdot \nabla \bu) + \frac{1}{\alpha} \nabla \mu_p = \Div(\rho \bbd \bu) - \zeta \rho \nabla \Delta \phi + \zeta \rho \nabla F'(\phi).
\end{align}
Expressing the potential $F$ in terms of the density instead of the phase-field variable, that is, 
\begin{align} 
\widetilde{F}(\rho) = F(\phi(\rho))\quad \text{and}\quad \widetilde{F}_c(\rho) = F_c(\phi(\rho)), \quad \text{with}\quad  \phi(\rho) = - \zeta \rho + \zeta -1,
\label{eqs:resuced-F}
\end{align} 
we have $\widetilde{F}'(\rho)=-\zeta F'(\phi)$ and can write \eqref{eqs:momentum-reduced} as
\begin{equation}
	\label{eqs:reduced}
	\rho (\partial_t \mathbf{u} + \bu \cdot \nabla \bu) +  \rho \nabla \widetilde{F}'(\rho)  
	=  \Div (\rho \bbd \bu)
	+ \zeta^2 \rho \nabla \Delta \rho 
	- \frac{1}{\alpha} \nabla \mu_p.
\end{equation}
Next, let us define the ``entropy'' functions 
\begin{align}
	\label{eqs:new-entropy}
	\widetilde{P}(\rho) = \rho\widetilde{F}'(\rho) - \widetilde{F}(\rho)\quad \text{and}\quad \widetilde{P}_c(\rho) = \rho\widetilde{F}_c'(\rho) - \widetilde{F}_c(\rho)
\end{align}
through Legendre's transform. Then we arrive at 
\begin{align}
	\label{eqs:reformulated-momentum}
	\rho (\partial_t \mathbf{u} + \bu \cdot \nabla \bu) + \nabla \widetilde{P}(\rho)  
	=  \Div (\rho \bbd \bu)
	+ \zeta^2 \rho \nabla \Delta \rho 
	- \frac{1}{\alpha} \nabla \mu_p,
\end{align}
which combined with the continuity equation \eqref{model2-2}, yields a Korteweg-type fluid system with an additional forcing term $- {\alpha}^{-1} \nabla \mu_p$. In the subsequent analysis, we shall see that this structure greatly facilitates in exploiting the improved regularity properties. Similar reductions can be found in \cite[Section 3, Section 4]{FK2017} for Naiver--Stokes/Cahn--Hilliard models for non-isothermal and isothermal compressible fluid mixtures. See also \cite{FK2019} for related discussions.

\begin{remark}\rm 
	The additional forcing term $- {\alpha}^{-1} \nabla \mu_p$ on the right-hand side of \eqref{eqs:reformulated-momentum} is ``good'' compared to the classical Navier--Stokes--Korteweg type system, in view of the energy dissipation. Indeed, from the divergence equation $\Div \bu = \alpha \Delta \mu_p$ (see \eqref{original-weak-quasi}), we formally find that
	\begin{align*}
		\frac{1}{\alpha} \nabla \mu_p = \frac{1}{\alpha^2} \nabla \Delta^{-1} \Div \bu \sim \bu,
	\end{align*}
	which serves as a damping term in the momentum equation. Here, $\nabla \Delta^{-1}$ denotes the pseudo-differential operator of the Fourier symbol $\mathrm{i} \xi/{\absm{\xi}^2}$. For some other hydrodynamic models, this kind of damping is usually assumed for the purpose of analysis, for example, a term like $- r_0 \bu$ in \cite{BD2003,BDL2003,VY2016}. Here, we get it for free from the Cahn--Hilliard coupling structure.
\end{remark}

Below we give some comments on the newly defined pressure $\widetilde{P}$.
\begin{remark}\label{rem:properties-entropy} \rm 
In view of \eqref{density}, for any given $\phi \in (-1,1)$, we have 
$$
\underline{\rho} < \rho(\phi) < 1,\quad \text{where}\quad \underline{\rho}:= 1-\frac{2}{\zeta}\in (0,1).
$$ 
Consider an admissible singular potential, for example, $F_c(\phi) = (1-\phi^2)^{-\beta}$. Near the corresponding singular points of the transformed potential $\widetilde{F}_c$, namely $\underline{\rho}$ and $1$, we have 
	\begin{align}
		\widetilde{F}(r) \sim \widetilde{F}_c(r) \sim \frac{1}{(1-r)^{\beta}} + \frac{1}{(r-\underline{\rho})^{\beta}},\label{singular behavior}
	\end{align}
	which implies 
	\begin{align*}
		\widetilde{F}'(r) \sim \widetilde{F}_c'(r) \sim \frac{\beta}{(1-r)^{\beta+1}} - \frac{\beta}{(r-\underline{\rho})^{\beta+1}}.
	\end{align*}
	Hence, 
	\begin{align*}
		\widetilde{P}(r) \sim \widetilde{P}_c(r) \sim \frac{\beta r}{(1-r)^{\beta+1}} - \frac{\beta r}{(r-\underline{\rho})^{\beta+1}}
		- \frac{1}{(1-r)^{\beta}} - \frac{1}{(r-\underline{\rho})^{\beta}}.
	\end{align*}
	From the fact $\widetilde{P}_c'(r)= r\widetilde{F}_c''(r) > 0$ for $r\in (\underline{\rho},1)$, we infer that $\widetilde{P}_c(r)$ has only one zero point on $(\underline{\rho},1)$, which we denote by $\rho_*$. Finally, we observe that the singular behavior of $\widetilde{P}$ and $\widetilde{F}'$ near $\underline{\rho}$ and $1$ are the same by construction, because $\widetilde{F}(r)$ is less singular than $r\widetilde{F}'(r)$.
	\begin{figure}[ht]\label{Fig-1}
		\def\zeta{3}
		
		\pgfmathsetmacro{\rmin}{1 - 2/\zeta}
		\pgfmathsetmacro{\rmax}{1}
		
		\pgfmathdeclarefunction{phi}{1}{\pgfmathparse{-\zeta*#1+\zeta-1}}
		
		\pgfmathdeclarefunction{Wphi}{1}{\pgfmathparse{1/(1 - (phi(#1))^2)}}
		
		\pgfmathdeclarefunction{dWdphi}{1}{\pgfmathparse{2*(phi(#1))/(1 - (phi(#1))^2)^2}}
		
		\pgfmathdeclarefunction{Fprime}{1}{\pgfmathparse{dWdphi(#1)*(-\zeta)}}
		
		\pgfmathdeclarefunction{Ptilde}{1}{\pgfmathparse{#1*Fprime(#1)-Wphi(#1)}}
		
		\begin{subfigure}[c]{0.48\textwidth}
			\centering
			\begin{tikzpicture}[scale=0.8]
				\begin{axis}[
					axis lines=middle, 
					width=8cm, height=6cm,
					xlabel={$r$}, 
					xlabel style={at={(1.075,-0.01)}, anchor=north east}, 
					ylabel={$\widetilde{F}_c(r)$},
					ylabel style={at={(0,1)}, anchor=east}, 
					xtick=\empty,
					ytick=\empty,
					domain=\rmin+0.0175:\rmax-0.0175, samples=300,
					ymin=0, ymax=10,
					xmin=0.3, xmax=1.05,
					clip=false %
					]
					\addplot[blue,thick] {Wphi(x)};
					\draw[dashed,black] (axis cs:\rmin, 0) -- (axis cs:\rmin, 10);
					\draw[dashed,black] (axis cs:\rmax, 0) -- (axis cs:\rmax, 10);
					\node[below] at (axis cs:\rmin,0) {$\underline{\rho}$};
					\node[below] at (axis cs:\rmax,0) {$1$};
				\end{axis}
			\end{tikzpicture}
			\caption{}
			\label{fig:Frho}
		\end{subfigure}
		\begin{subfigure}[c]{0.48\textwidth}
			\centering
			\begin{tikzpicture}[scale=0.8]
				\begin{axis}[
					axis lines=middle, 
					width=8cm, height=6cm,
					xlabel={$r$}, 
					xlabel style={at={(1.075,0.49)}, anchor=north east}, 
					ylabel={$\widetilde{P}_c(r)$},
					ylabel style={at={(0,1)}, anchor=east}, %
					xtick=\empty,
					ytick=\empty,
					domain=\rmin+0.01:\rmax-0.01, samples=300,
					ymin=-50, ymax=50,
					xmin=0.3, xmax=1.05
					]
					\addplot[red,thick] {Ptilde(x)};
					\draw[dashed,black] (axis cs:\rmin, -50) -- (axis cs:\rmin, 50);
					\draw[dashed,black] (axis cs:\rmax, -50) -- (axis cs:\rmax, 50);
					\node[below] at (axis cs:\rmax-0.25,0) {$\rho_*$};
					\node[below right] at (axis cs:\rmin,0) {$\underline{\rho}$};
					\node[below left] at (axis cs:\rmax,0) {$1$};
				\end{axis}
			\end{tikzpicture}
			\caption{}
			\label{fig:Prho}
		\end{subfigure}%
		\caption{Illustration of the newly defined potential $\widetilde{F}_c$ and entropy $\widetilde{P}_c$}
		\label{fig:entropy}
	\end{figure}
\end{remark}

\subsection{\textit{A priori} estimates}
\label{sec:a-priori-estimates}

We derive formal \textit{a priori} estimates for smooth solutions to problem \eqref{model2}, which will be rigorously justified by suitable approximations later. 
\begin{lemma}[Conservation of mass and momentum]
	For smooth solutions of problem \eqref{model2}, it holds
	\begin{gather}
		\label{eqs:conservation-mass-momentum-rho}
		  \ddt \int_{\bbt^3} \rho \,\dx 
         = \ddt \int_{\bbt^3} \phi \,\dx 
         =  0, \quad
		\ddt \int_{\bbt^3} \rho \bu \,\dx = 0,\quad \forall\, t\in (0,T).
	\end{gather}
\end{lemma}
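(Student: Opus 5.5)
The plan is to integrate each equation of \eqref{model2} over $\bbt^3$ and use periodicity, so that every term in divergence form integrates to zero.

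\emph{Mass of $\rho$.} Integrating the continuity equation \eqref{model2-2} over $\bbt^3$ gives
\[
\ddt \int_{\bbt^3} \rho \,\dx = -\int_{\bbt^3} \Div(\rho\bu)\,\dx = 0 .
\]

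\emph{Mass of $\phi$.} I integrate \eqref{model2-3}:
\[
\ddt\int_{\bbt^3}\phi\,\dx = \int_{\bbt^3}\bigl(-\Div(\phi\bu)+\Delta(\mu+\alpha p)\bigr)\,\dx = 0 .
\]
Alternatively, this follows from the affine relation \eqref{density}: $\phi$ is an affine function of $\rho$ with nonzero slope $-\zeta$, since $\rho_1\neq\rho_2$, so conservation of $\int\rho$ gives conservation of $\int\phi$.

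\emph{Momentum.} Here I integrate \eqref{model2-1}. The terms $\Div(\rho\bu\otimes\bu)$, $\nabla p$ and $\Div S(\phi,\bbd\bu)$ are divergences or gradients, so their integrals vanish by periodicity. The only term that is not obviously in divergence form is the capillary force $\phi\nabla\mu$. For it I use the identity \eqref{ca-force}, which writes it as
\[
\phi\nabla\mu = \Div\Bigl(\nabla\phi\otimes\nabla\phi+\bigl(\phi\mu-\tfrac12|\nabla\phi|^2-F(\phi)\bigr)\mathbb I\Bigr),
\]
and hence its integral also vanishes. I would verify \eqref{ca-force} briefly for smooth $\phi$ using \eqref{model2-4}:
\begin{align*}
\Div(\nabla\phi\otimes\nabla\phi) &= \Delta\phi\,\nabla\phi+\nabla\tfrac12|\nabla\phi|^2,\\
\nabla(\phi\mu) &= \mu\nabla\phi+\phi\nabla\mu,\\
\mu\nabla\phi &= -\Delta\phi\,\nabla\phi+\nabla F(\phi).
\end{align*}
Adding these together with $-\nabla\tfrac12|\nabla\phi|^2-\nabla F(\phi)$ leaves exactly $\phi\nabla\mu$. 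Therefore $\ddt\int_{\bbt^3}\rho\bu\,\dx=0$.

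There is no real obstacle for smooth solutions. The only point needing care is this rewriting of the capillary term, together with the implicit requirement that $\phi$ stays in $(-1,1)$, so that $F(\phi)$ is defined and smooth along the solution.
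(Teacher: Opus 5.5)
Your proposal is correct and follows the paper's proof: integrate \eqref{model2-2}, \eqref{model2-3} and \eqref{model2-1} over $\bbt^3$, and use \eqref{ca-force} to show that the capillary term $\phi\nabla\mu$ integrates to zero. Your explicit check of \eqref{ca-force} via \eqref{model2-4} is accurate and spells out a step the paper only cites.
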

\begin{proof}
	The first equality in \eqref{eqs:conservation-mass-momentum-rho} can be derived by integrating \eqref{model2-2} and \eqref{model2-3} over $\bbt^3$, while the second can be derived by integrating \eqref{model2-1} over $\bbt^3$. We note that the integral involving the capillary force term vanishes due to \eqref{ca-force}, that is, 
	\begin{align*}
		\int_{\bbt^3} \phi \nabla \mu \,\dx
		& =  \int_{\bbt^3}\Div\Big(\nabla\phi\otimes\nabla\phi+ \Big(\phi\mu-\frac{1}{2}|\nabla \phi|^2-F(\phi)\Big)\mathbb{I}\Big)\,\dx
		= 0.
	\end{align*}
	The proof is complete. 
\end{proof}
\begin{lemma}[Energy identity]
	For smooth solutions of problem \eqref{model2}, it holds
	\begin{align}
		& \frac{\mathrm d}{\mathrm dt} 
		\int_{\mathbb{T}^{3}}
        \Big(\frac{\rho}{2}|\mathbf{u}|^2 
        + \frac{1}{2}|\nabla \phi|^2 
		+ F(\phi)\Big)\, \mathrm dx
		+ \int_{\mathbb{T}^{3}}  \rho
		|\mathbb{D} \mathbf{u}|^2 \, \mathrm{d} x 
		+ \int_{\mathbb{T}^{3}} |\nabla (\mu+\alpha p)|^2\, \mathrm{d} x
		= 0,  
		\label{eqs:Energy-indentity}
	\end{align}
    for any $t\in (0,T)$.
\end{lemma}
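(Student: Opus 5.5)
We test each equation of \eqref{model2} with the multiplier dictated by the energy \eqref{Etot}, and show that the pressure contributions cancel through the constraint \eqref{original-weak-quasi}. All functions are smooth and periodic, so every integration by parts on $\bbt^3$ produces no boundary terms.

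\textbf{Kinetic energy.} We take the inner product of \eqref{model2-1} with $\bu$. Combining the continuity equation \eqref{model2-2} with the convective term in the usual way gives
\begin{align*}
\int_{\bbt^3}\big(\pt(\rho\bu)+\Div(\rho\bu\otimes\bu)\big)\cdot\bu\,\dx=\ddt\int_{\bbt^3}\frac{\rho}{2}|\bu|^2\,\dx .
\end{align*}
Since $\bbd\bu$ is symmetric, the viscous term gives $\int_{\bbt^3}\Div(\rho\bbd\bu)\cdot\bu\,\dx=-\int_{\bbt^3}\rho|\bbd\bu|^2\,\dx$. The pressure term gives $-\int_{\bbt^3}p\Div\bu\,\dx$, and the capillary term gives $-\int_{\bbt^3}\phi\nabla\mu\cdot\bu\,\dx=\int_{\bbt^3}\mu\Div(\phi\bu)\,\dx$. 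Altogether,
\begin{align*}
\ddt\int_{\bbt^3}\frac{\rho}{2}|\bu|^2\,\dx+\int_{\bbt^3}\rho|\bbd\bu|^2\,\dx=\int_{\bbt^3}p\Div\bu\,\dx+\int_{\bbt^3}\mu\Div(\phi\bu)\,\dx .
\end{align*}

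\textbf{Free energy.} We multiply \eqref{model2-3} by $\mu$. Using \eqref{model2-4} and one integration by parts, $\int_{\bbt^3}\pt\phi\,\mu\,\dx=\ddt\mathcal{E}_{\text{GL}}(\phi)$. This yields
\begin{align*}
\ddt\mathcal{E}_{\text{GL}}(\phi)+\int_{\bbt^3}\mu\Div(\phi\bu)\,\dx=-\int_{\bbt^3}\nabla\mu\cdot\nabla(\mu+\alpha p)\,\dx .
\end{align*}

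\textbf{Summing.} Adding the two identities, the transport terms $\int_{\bbt^3}\mu\Div(\phi\bu)\,\dx$ cancel. It remains to handle the pressure work $\int_{\bbt^3}p\Div\bu\,\dx$. By \eqref{original-weak-quasi}, and integrating by parts once more,
\begin{align*}
\int_{\bbt^3}p\Div\bu\,\dx=\alpha\int_{\bbt^3}p\,\Delta(\mu+\alpha p)\,\dx=-\int_{\bbt^3}\alpha\nabla p\cdot\nabla(\mu+\alpha p)\,\dx .
\end{align*}
This combines with the right-hand side of the free-energy identity into
\begin{align*}
-\int_{\bbt^3}\nabla(\mu+\alpha p)\cdot\nabla(\mu+\alpha p)\,\dx=-\int_{\bbt^3}|\nabla(\mu+\alpha p)|^2\,\dx ,
\end{align*}
which gives \eqref{eqs:Energy-indentity}.

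The only delicate step is this last one. The term $\int_{\bbt^3}p\Div\bu\,\dx$ does not vanish here, as it would in the incompressible case, and it must be converted through the constraint \eqref{original-weak-quasi}. That constraint is itself derived from \eqref{density}, \eqref{model2-2} and \eqref{model2-3}. Once it is used, the pressure work combines exactly with the Cahn--Hilliard dissipation into the full square $|\nabla\mu_p|^2$.
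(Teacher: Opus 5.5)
Your proposal is correct and follows the paper's own argument: you test \eqref{model2-1} with $\bu$, \eqref{model2-3} with $\mu$ and \eqref{model2-4} with $\pt\phi$, and use \eqref{model2-2} for the kinetic term. You then convert the pressure work $\int_{\bbt^3} p\Div\bu\,\dx$ via \eqref{original-weak-quasi} so that it merges with the Cahn--Hilliard dissipation into $|\nabla(\mu+\alpha p)|^2$; the paper states this in a single sentence, and your version simply writes out the integrations by parts, all of which check out.
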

\begin{proof}
	Multiplying \eqref{model2-1}, \eqref{model2-3} and \eqref{model2-4} by $ \mathbf{u} $, $ \mu $ and $ \partial_{t} \phi $, respectively, integrating over $\mathbb{T}^3$ and adding the resultants together, using integration by parts and \eqref{model2-2}, \eqref{original-weak-quasi}, we arrive at the conclusion \eqref{eqs:Energy-indentity}. The proof is complete.
\end{proof}

Next, we present an auxiliary identity related to the continuity equation \eqref{model2-2}.
\begin{lemma}\label{lem:BD-entropy-logrho}
	For smooth solutions of problem \eqref{model2}, it holds
	\begin{align*}
		\frac{1}{2}\frac{\mathrm d}{\mathrm dt}\int_{\mathbb{T}^{3}} \rho|\nabla \ln \rho|^2\, \mathrm{d} x+\int_{\mathbb{T}^{3}} \nabla\Div\mathbf{u}\cdot\nabla\rho\, \mathrm{d} x+\int_{\mathbb{T}^{3}} \rho\bbd\mathbf{u}:\nabla \ln \rho\otimes \nabla \ln \rho\, \mathrm{d} x=0,
	\end{align*}
    for any $t\in (0,T)$. 
\end{lemma}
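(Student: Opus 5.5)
We only use the continuity equation \eqref{model2-2}, written as $\pt\rho + \bu\cdot\nabla\rho + \rho\Div\bu = 0$, together with positivity of $\rho$ (which holds since $\underline{\rho}<\rho<1$ for smooth solutions with $|\phi|<1$). First we rewrite the functional as $\int_{\bbt^3}\rho|\nabla\ln\rho|^2\,\dx = \int_{\bbt^3}\rho^{-1}|\nabla\rho|^2\,\dx$, or more conveniently $\int_{\bbt^3}\rho\,|\nabla\ln\rho|^2\,\dx$ with $\bw := \nabla\ln\rho$. Dividing the continuity equation by $\rho$ gives $\pt \ln\rho + \bu\cdot\nabla\ln\rho + \Div\bu = 0$. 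Taking the gradient then yields the transport equation
\[
\pt \bw + \nabla(\bu\cdot\bw) + \nabla\Div\bu = 0 .
\]

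Next, we differentiate in time:
\[
\ddt\int_{\bbt^3}\frac{\rho}{2}|\bw|^2\,\dx=\int_{\bbt^3}\frac{\pt\rho}{2}|\bw|^2\,\dx+\int_{\bbt^3}\rho\,\bw\cdot\pt\bw\,\dx .
\]
In the first term we substitute $\pt\rho=-\Div(\rho\bu)$ and integrate by parts, which gives $\int \rho\,\bu\cdot\nabla\frac{|\bw|^2}{2}\,\dx = \int\rho\, u_j w_i\partial_j w_i\,\dx$. In the second term we substitute $\pt\bw$ and obtain the two contributions $-\int\rho\,\bw\cdot\nabla\Div\bu\,\dx$ and $-\int\rho\, w_i\partial_i(u_jw_j)\,\dx$. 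Since $\rho\bw=\nabla\rho$, the first contribution is exactly $-\int\nabla\Div\bu\cdot\nabla\rho\,\dx$, which is the middle term of the claimed identity. Expanding the second gives $-\int\rho\, w_i w_j\partial_i u_j\,\dx-\int\rho\, w_i u_j\partial_i w_j\,\dx$.

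The key observation is that $\bw$ is a gradient, so $\partial_i w_j=\partial_j w_i$. Hence $\int\rho\,w_iu_j\partial_iw_j\,\dx=\int\rho\,u_jw_i\partial_jw_i\,\dx$, and this cancels exactly the term coming from $\pt\rho$. What remains is $-\int\rho\,\nabla\bu:(\bw\otimes\bw)\,\dx$. Because $\bw\otimes\bw$ is symmetric, this equals $-\int\rho\,\bbd\bu:(\nabla\ln\rho\otimes\nabla\ln\rho)\,\dx$. Moving both terms to the left-hand side gives the stated identity.

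The only delicate point is this cancellation of the cubic transport terms, which relies on the curl-free structure of $\nabla\ln\rho$ and on the correct index bookkeeping. Everything else is routine integration by parts on $\bbt^3$, where periodicity removes all boundary terms, and the computation is justified by the smoothness and positivity of $\rho$.
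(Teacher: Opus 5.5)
Your proof is correct and follows essentially the same route as the paper. The paper simply defers to the direct computation of \cite[Lemma 2]{BDL2003}, which uses only the continuity equation \eqref{model2-2} and the positivity $\underline{\rho}<\rho<1$. Your derivation via the transport equation for $\nabla\ln\rho$, with the curl-free cancellation of the cubic terms, is exactly that computation written out.
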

\begin{proof}
	The proof is the same as that for  \cite[Lemma 2]{BDL2003}. Here, we just recall that if the phase-field variable satisfies $\phi \in (-1,1)$, then the mass density is strictly positive and bounded, i.e.,  $0<\underline{\rho} < \rho(\phi) < 1$. 
\end{proof}

Finally, we derive a BD-entropy law for the reformulated equation \eqref{eqs:reduced} using the idea in \cite[Lemma 4]{BDL2003}.
%
\begin{lemma}[BD-entropy law]
	\label{lem:BD-entropy}
	For smooth solutions of problem \eqref{model2}, it holds 
	\begin{align}
		& \ddt \int_{\bbt^3} 
		\Big(
		\frac{\rho}{2} \abs{\bu + \nabla \ln \rho}^2
		+ \frac{1}{2}|\nabla \phi|^2
        + F(\phi) 
		- \frac{1}{\alpha^{2}} \ln \rho
		\Big) \, \dx 
        + \int_{\bbt^3} \absm{\nabla \mu_p}^2 \, \dx  
        \nonumber \\
		& \quad 
        +\int_{\mathbb{T}^{3}} \widetilde{F}''(\rho)|\nabla\rho|^{2} \mathrm{d} x
		+ \zeta^2\int_{\mathbb{T}^{3}} |\Delta\rho|^{2} \,\mathrm{d} x  
		+\frac{1}{4} \int_{\mathbb{T}^{3}} \rho  \abs{\nabla \bu-(\nabla\bu)^\top}^2\,\mathrm{d}x 
		=  0,
        \label{eqs:BD-u-nabla-log-rho}
	\end{align}
    for any $t\in (0,T)$. 
\end{lemma}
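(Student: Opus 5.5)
The plan is to follow the Bresch--Desjardins strategy of \cite[Lemma 4]{BDL2003}, applied to the Korteweg-type form \eqref{eqs:reformulated-momentum} of the momentum equation together with the continuity equation \eqref{model2-2}. First expand
\[
\int_{\bbt^3}\frac{\rho}{2}\abs{\bu+\nabla\ln\rho}^2\,\dx=\int_{\bbt^3}\frac{\rho}{2}\absm{\bu}^2\,\dx+\int_{\bbt^3}\bu\cdot\nabla\rho\,\dx+\frac12\int_{\bbt^3}\rho\absm{\nabla\ln\rho}^2\,\dx .
\]
The time derivative of the first term, together with the free energy, is given by the energy identity \eqref{eqs:Energy-indentity}. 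This produces the dissipations $\int\rho\absm{\bbd\bu}^2$ and $\int\absm{\nabla\mu_p}^2$. The time derivative of the third term is given by Lemma \ref{lem:BD-entropy-logrho}, which contributes $-\int\nabla\Div\bu\cdot\nabla\rho-\int\rho\bbd\bu:\nabla\ln\rho\otimes\nabla\ln\rho$. It then remains to compute the time derivative of the cross term $\int\rho\bu\cdot\nabla\ln\rho$.

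For the cross term, write
\[
\ddt\int\rho\bu\cdot\nabla\ln\rho=\int\pt(\rho\bu)\cdot\nabla\ln\rho+\int\rho\bu\cdot\nabla\Big(\frac{\pt\rho}{\rho}\Big).
\]
Using $\pt\rho=-\Div(\rho\bu)$, the second integral becomes $\int(\Div(\rho\bu))^2/\rho$. In the first integral, insert the conservative form of \eqref{eqs:reformulated-momentum},
\[
\pt(\rho\bu)+\Div(\rho\bu\otimes\bu)=\Div(\rho\bbd\bu)-\nabla\widetilde P(\rho)+\zeta^2\rho\nabla\Delta\rho-\alpha^{-1}\nabla\mu_p,
\]
and treat each term as follows.
\begin{itemize}
\item The pressure term gives $-\int\widetilde P'(\rho)\absm{\nabla\rho}^2/\rho=-\int\widetilde F''(\rho)\absm{\nabla\rho}^2$, since $\widetilde P'=\rho\widetilde F''$.
\item The capillarity term gives $\zeta^2\int\nabla\Delta\rho\cdot\nabla\rho=-\zeta^2\int\absm{\Delta\rho}^2$.
\item For the $\mu_p$-term, use \eqref{original-weak-quasi}, namely $\Div\bu=\alpha\Delta\mu_p$, to get
\[
-\frac1\alpha\int\nabla\mu_p\cdot\nabla\ln\rho=\frac1\alpha\int\Delta\mu_p\ln\rho=\frac1{\alpha^2}\int\Div\bu\,\ln\rho=-\frac1{\alpha^2}\int\bu\cdot\nabla\ln\rho .
\]
Since $\pt\ln\rho=-\bu\cdot\nabla\ln\rho-\Div\bu$ and $\int\Div\bu=0$, this quantity is exactly $-\ddt\int(-\alpha^{-2}\ln\rho)$. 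This explains the extra entropy term $-\alpha^{-2}\ln\rho$ in \eqref{eqs:BD-u-nabla-log-rho}.
\end{itemize}

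The main obstacle is the purely kinematic/viscous bookkeeping. Three quantities must be combined: the convective contribution $-\int\Div(\rho\bu\otimes\bu)\cdot\nabla\ln\rho+\int(\Div(\rho\bu))^2/\rho$, the viscous contribution $\int\Div(\rho\bbd\bu)\cdot\nabla\ln\rho$, and the two terms from Lemma \ref{lem:BD-entropy-logrho}. Together with $-\int\rho\absm{\bbd\bu}^2$ from the energy identity, they should collapse to $-\frac14\int\rho\absm{\nabla\bu-(\nabla\bu)^\top}^2$. To check this, I would work componentwise with repeated integrations by parts. The convective pieces reduce to $\int\rho\,\partial_iu_j\partial_ju_i$ plus terms of the form $\int\Div\bu\,\bu\cdot\nabla\rho$. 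The identity $\Div(\rho\bbd\bu)=\frac12\Div(\rho\nabla\bu)+\frac12\Div(\rho(\nabla\bu)^\top)$, combined with $\rho\nabla\ln\rho=\nabla\rho$, lets the viscous piece cancel both $\int\nabla\Div\bu\cdot\nabla\rho$ and $\int\rho\bbd\bu:\nabla\ln\rho\otimes\nabla\ln\rho$. What remains is $-\int\rho\absm{\bbd\bu}^2+\int\rho\,\partial_iu_j\partial_ju_i$, which equals $-\frac14\int\rho\absm{\nabla\bu-(\nabla\bu)^\top}^2$.

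This is the special compatibility between the viscosity $\nu=\rho/2$ and the drift $\nabla\ln\rho$, exactly as in \cite{BDL2003}. Summing all contributions yields \eqref{eqs:BD-u-nabla-log-rho}. Smoothness and $0<\underline\rho<\rho<1$ justify every integration by parts and the division by $\rho$.
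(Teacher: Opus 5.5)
Your proposal is correct and follows essentially the paper's argument: test the Korteweg-form momentum equation with $\nabla\ln\rho$, add Lemma \ref{lem:BD-entropy-logrho}, use $\Div\bu=\alpha\Delta\mu_p$ to produce the $-\alpha^{-2}\ln\rho$ term, apply $\nabla\bu:(\nabla\bu)^\top=|\bbd\bu|^2-\frac14|\nabla\bu-(\nabla\bu)^\top|^2$, and add the energy identity. The only difference is bookkeeping: you differentiate the cross term $\int\rho\bu\cdot\nabla\ln\rho$ using the conservative form, and there the convective pieces collapse exactly to $\int\rho\,\nabla\bu:(\nabla\bu)^\top$ (the $\int\Div\bu\,\bu\cdot\nabla\rho$ terms cancel), whereas the paper tests the non-conservative form and converts $\int\pt\bu\cdot\nabla\rho$ into a time derivative.
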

\begin{proof}
	Multiplying  \eqref{eqs:reduced} by $ \nabla \ln \rho$ and integrating over $\mathbb{T}^{3}$, we have
	\begin{align}
		& \int_{\mathbb{T}^{3}} (\partial_{t}\mathbf{u} + \mathbf{u} \cdot \nabla \mathbf{u}) \cdot \nabla\rho\, \mathrm{d} x
		+ \int_{\mathbb{T}^{3}} \bbd \mathbf{u}: \Big(\nabla^{2}\rho-\frac{\nabla\rho \otimes \nabla\rho}{\rho} \Big)\, \mathrm{d} x 
        \nonumber
		\\ 
        & \qquad + \int_{\mathbb{T}^{3}} \widetilde{F}''(\rho) \abs{\nabla \rho}^2\,\mathrm{d} x
		+\zeta^2 \int_{\mathbb{T}^{3}}|\Delta\rho|^{2}\, \mathrm{d} x
		+ \frac{1}{\alpha} \int_{\mathbb{T}^{3}} \nabla \mu_{p} \cdot \nabla \ln \rho \,\mathrm{d} x = 0. \label{BD-entropy 122}
	\end{align}
    From \eqref{BD-entropy 122} and Lemma \ref{lem:BD-entropy-logrho} we infer that 
	\begin{align} 
		&\int_{\mathbb{T}^{3}} \partial_{t}\mathbf{u}\cdot \nabla\rho \, \mathrm{d} x 
        +\int_{\mathbb{T}^{3}}(\mathbf{u}\cdot\nabla \mathbf{u})\cdot \nabla\rho \, \mathrm{d} x
		+ \int_{\mathbb{T}^{3}} \bbd \mathbf{u}: \nabla^{2} \rho \, \mathrm{d} x \nonumber
		\\
        &\qquad +
		\int_{\mathbb{T}^{3}} \widetilde{F}''(\rho)  \abs{\nabla \rho}^2 \, \mathrm{d} x
		+ \zeta^2 \int_{\mathbb{T}^{3}} |\Delta\rho|^{2} \, \mathrm{d} x
		+ \frac{1}{\alpha} \int_{\mathbb{T}^{3}}\nabla\mu_{p}\cdot \nabla \ln \rho \, \mathrm{d} x \nonumber
		\\
        &\qquad +\frac{1}{2}\frac{\mathrm d}{\mathrm dt}\int_{\mathbb{T}^{3}} \rho|\nabla \ln \rho|^2\, \mathrm{d} x +\int_{\mathbb{T}^{3}} \nabla\Div\mathbf{u}\cdot\nabla\rho\, \mathrm{d} x = 0.
        \label{BD-entropy 123}
	\end{align}
	Using \eqref{model2-2} and integration by parts, we find   
	\begin{align*}
		\int_{\mathbb{T}^{3}} \partial_{t} \mathbf{u} \cdot \nabla \rho \,\mathrm{d} x 
        & 
		= \ddt \int_{\mathbb{T}^{3}} \mathbf{u} \cdot \nabla \rho \, \mathrm{d} x 
		-\int_{\mathbb{T}^{3}} \mathbf{u} \cdot \nabla \partial_{t} \rho \, \mathrm{d} x
		\\ 
        & = \ddt \int_{\mathbb{T}^{3}} \mathbf{u} \cdot \nabla \rho \, \mathrm{d} x 
		+\int_{\mathbb{T}^{3}} \mathbf{u} \cdot \nabla \Div(\rho\mathbf{u}) \, \mathrm{d} x
		\\ 
        & =\ddt \int_{\mathbb{T}^{3}} \mathbf{u} \cdot \nabla \rho \, \mathrm{d} x 
		-\int_{\mathbb{T}^{3}} (\mathbf{u}\cdot\nabla \mathbf{u}) \cdot \nabla \rho \, \mathrm{d} x 
        -
		\int_{\mathbb{T}^{3}} \rho\nabla\mathbf{u} : (\nabla \mathbf{u})^\top \,\mathrm{d} x,
	\end{align*}
	which together with \eqref{BD-entropy 123} yields 
	\begin{align}
		&  \ddt \int_{\mathbb{T}^{3}} \mathbf{u} \cdot \nabla \rho \, \mathrm{d} x 
        + \frac{1}{2} \ddt \int_{\mathbb{T}^{3}} \rho|\nabla \ln \rho|^2\, \mathrm{d} x 
        +\int_{\mathbb{T}^{3}} \widetilde{F}''(\rho)|\nabla\rho|^{2} \, \mathrm{d} x 
		+ \zeta^2\int_{\mathbb{T}^{3}} |\Delta\rho|^{2} \, \mathrm{d} x  
        \nonumber
		\\
		& 
		\quad = \int_{\mathbb{T}^{3}} \rho \nabla\mathbf{u}:(\nabla \bu)^\top 
        \, \mathrm{d} x
		- \frac{1}{\alpha}  \int_{\mathbb{T}^{3}} \nabla \mu_p \cdot \nabla \ln \rho \, \dx 
        \nonumber
		\\
		& 
		\quad = \int_{\mathbb{T}^{3}} \rho \nabla\mathbf{u}:(\nabla \bu)^\top \, \mathrm{d} x
		+ \frac{1}{\alpha^2}  \int_{\mathbb{T}^{3}} \Div \bu \ln \rho \,\dx, 
        \label{eqs:BD-nabla-log-rho}
	\end{align}
	where we have used the fact $\Div \bu = \alpha \Delta \mu_p$ and
	\begin{align*}
		\int_{\mathbb{T}^{3}}  \bbd \mathbf{u}: \nabla^{2} \rho \mathrm{d}x + \int_{\mathbb{T}^{3}} \nabla\Div\mathbf{u}\cdot\nabla\rho\, \mathrm{d} x=0.
	\end{align*}
	From \eqref{model2-2} we also get
	\begin{align} \label{eqs:BD-ddt-log-rho}
		\int_{\mathbb{T}^{3}} \Div \bu \ln \rho  \,\dx 
        =-\int_{\mathbb{T}^{3}} \mathbf{u} \cdot \nabla \ln \rho \, \mathrm{d} x
        =\ddt \int_{\mathbb{T}^{3}} \ln \rho \,\mathrm{d} x.
	\end{align}
 Using \eqref{eqs:BD-nabla-log-rho}, \eqref{eqs:BD-ddt-log-rho} and the identity
	\begin{align*}
		\nabla \mathbf{u} : (\nabla \mathbf{u})^\top
		=  |\mathbb{D} \mathbf{u}|^2 -
		\frac{1}{4} \abs{\nabla \bu-(\nabla\bu)^\top}^2,
	\end{align*} 
    we can deduce that 
	\begin{align}
		& \frac{1}{2} \ddt \int_{\mathbb{T}^{3}} \rho|\nabla \ln \rho|^2\, \mathrm{d} x
		+ \ddt \int_{\mathbb{T}^{3}} \mathbf{u} \cdot \nabla \rho \,\mathrm{d} x 
		- \frac{1}{\alpha^{2}} \ddt \int_{\mathbb{T}^{3}} \ln \rho \, \mathrm{d} x 
        \nonumber \\ 
		& \qquad 
        +\int_{\mathbb{T}^{3}} \widetilde{F}''(\rho)|\nabla\rho|^{2} \, \mathrm{d} x
		+ \zeta^2\int_{\mathbb{T}^{3}} |\Delta\rho|^{2} \,\mathrm{d} x  
		+\frac{1}{4} \int_{\mathbb{T}^{3}} \rho  \abs{\nabla \bu-(\nabla\bu)^\top}^2\,\mathrm{d}x 
        \nonumber \\
		&\quad = \int_{\mathbb{T}^{3}} \rho |\mathbb{D} \mathbf{u}|^2\,  \mathrm{d} x. \label{eqs:BD-nabla-log-rho-2}
	\end{align}
Finally, combining \eqref{eqs:BD-nabla-log-rho-2} with the energy identity \eqref{eqs:Energy-indentity}, we arrive at the conclusion \eqref{eqs:BD-u-nabla-log-rho}.

The proof is complete.
\end{proof}

\section{The Regularized System}
\label{sec:approximate-system}
In this section, we study a properly regularized system for the original problem \eqref{model2}. 

\subsection{Regularizations}
Substituting \eqref{model2-2}, \eqref{original-weak-quasi} and \eqref{muap} into the reformulated momentum equation \eqref{eqs:reformulated-momentum}, we obtain 
\begin{align}
	\label{eqs:reformulated-momentum-b}
	\partial_t (\rho \bu) 
			+ \Div(\rho \bu \otimes \bu)  + \nabla \widetilde{P}(\rho)  
	=  \Div (\rho \bbd \bu)
	+ \zeta^2 \rho \nabla \Delta \rho 
	- \frac{1}{\alpha^2} \nabla \Delta^{-1} \Div \bu.
\end{align}
Let $(\bu_{0},\rho_{0}) \in L^2(\bbt^3) \times H^1(\bbt^3)$, with $\rho_0\in (\underline{\rho},1)$ almost everywhere in $\mathbb{T}^3$.
For any given (small) constants $\sigma,\delta >0$, we consider the following regularized problem for equation \eqref{eqs:reformulated-momentum-b} and the continuity equation \eqref{model2-2}:  
\begin{subequations}
	\label{eqs:app-P-sigma}
	\begin{align}
			& \partial_t (\rho \bu) 
			+ \Div(\rho \bu \otimes \bu) 
			+ \nabla \widetilde{P}_\sigma(\rho) 
			+ \delta \rho \nabla W_\delta'(\rho)
			- \Div (\rho \bbd \bu) 
            \notag \\
			& \qquad \qquad = 
			-\delta \rho \nabla \Delta^2 \rho
			- \delta \nabla \rho \cdot \nabla \bu
			- \delta \Delta^2 \bu
			+ \zeta^2 \rho \nabla \Delta \rho
			- \frac{1}{\alpha^2} \nabla \Delta^{-1} \Div \bu,
		\label{eqs:app-momentum-P-sigma}
		\\
		& \pt \rho + \Div (\rho \bu) = \delta \Delta \rho, 
        \label{eqs:app-mass-P-sigma}
        \\ 
		& \mathbf{u}|_{t=0} =\mathbf{u}_{0},\quad  \rho|_{t=0}=\rho_{0,\delta}.
        \label{eqs:initial data}
	\end{align}
\end{subequations}
\begin{remark} 
\label{rem:ini}
\rm
Related to the original problem \eqref{model2}, the initial datum $\rho_0$ of interest is given by 
$$\rho_0 = - \frac{\phi_0}{\zeta} - \frac{1}{\zeta} + 1,\quad \text{for}\ \ \phi_0\in H^1(\mathbb{T}^3)\ \ \text{with} \ \ F(\phi_0)\in L^1(\mathbb{T}^3).
$$ 
We note that the two equations \eqref{eqs:app-momentum-P-sigma} and \eqref{eqs:app-mass-P-sigma} have already formed a closed system for $(\mathbf{u}, \rho)$. Once $\rho$ is constructed, we can determine the ``approximate'' phase-field variable $\phi$ by \eqref{density}, and then the corresponding ``approximate'' chemical potential through $\mu=-\Delta \phi + F_\sigma'(\phi)$, cf. \eqref{app-F}.
\end{remark}

\textbf{The regularized density $\rho_{0,\delta}$.} 
In \eqref{eqs:initial data}, for $0<\delta \ll 1$, we take the approximate initial density by
$$ 
\rho_{0,\delta} = \rho_0 * \widetilde{\eta}^{\mathrm{per}}_\delta\quad \text{with}\ \ \ \widetilde{\eta}^{\mathrm{per}}_\delta(x) = \sum_{\mathbf{k} \in \bb{Z}^3} \eta_{\delta^{1/4}}(x+\mathbf{k}),
$$ 
where $\eta_\delta$ (with  $\delta>0$) denote the classical mollifiers in $\bbr^3$, see Appendix \ref{sec:technical-lemma}. Then we have 
$$
\rho_{0,\delta}\in C^\infty(\mathbb{T}^3),\quad  \text{with}\ \ \underline{\rho}< \rho_{0,\delta}< 1\ \ \text{in}\ \ \mathbb{T}^3,
$$
satisfying 
\begin{align}
	\label{eqs:rho0-bounds}
	\norm{\rho_{0,\delta}}_{H^2(\bbt^3)} \leq C \delta^{-\frac14} \norm{\rho_0}_{H^1(\bbt^3)},
\end{align}
and  
\begin{align}
	\label{eqs:rho0-convergence}
	\norm{\rho_{0,\delta} - \rho_0}_{H^1(\bbt^3)} \to 0,\quad \text{as}\ \delta \to 0.
\end{align}

\textbf{The regularized potentials $F_{\sigma}$ and $\widetilde{F}_{\sigma}$.} 
The singular potential $ F$ is approximated by 
\begin{align}
F_{\sigma}(r) = F_{c,\sigma}(r) - \frac{\omega}{2} r^2 \in C^2(\mathbb{R}),
\label{app-F}
\end{align} 
for $0<\sigma \ll 1$, where $F_{c,\sigma}$ denotes the approximation of $F_c$ such that 
\begin{align*}
	F_{c,\sigma}(r)
	= \left\{
	\begin{aligned}
		&  \sum_{j=0}^2 \frac{1}{j!} F_c^{(j)}(1-\sigma) [r - (1-\sigma)]^j, && \quad r \geq 1-\sigma, \\
		& F_c(r), && \quad r \in (-1+\sigma, 1-\sigma), \\
		&  \sum_{j=0}^2 \frac{1}{j!} F_c^{(j)}(-1+\sigma) [r - (-1+\sigma)]^j, && \quad r \leq -1+\sigma,
	\end{aligned}
	\right.
\end{align*}
Then we define 
$$
\widetilde{F}_\sigma(\rho) = F_\sigma(\phi(\rho))\quad \text{and}\quad \widetilde{F}_{c,\sigma}(\rho) = F_{c,\sigma}(\phi(\rho)),
$$ 
through the linear relation \eqref{density}. 
By Assumption \ref{main assumption} and \cite[Lemma 3.1]{GGW2018}, there exists some constant $C_*>0$ that may depend on $\omega$, $\sigma_0$, $\zeta$, but is independent of $0<\sigma\ll 1$ such that 
\begin{align}
\widetilde{F}(r)\geq -C_*,\quad 
\widetilde{F}_\sigma(r)\geq -C_*,\quad \forall\, r\in \mathbb{R}.
\label{low-bd-F1}
\end{align}
In addition, similar to \cite[Lemma 3.2]{GGW2018}, we have 
\begin{align}
\widetilde{F}_{c,\sigma}(r)\leq \widetilde{F}_c(r),\quad \forall\, r\in (\underline{\rho},1).
\label{Fc-up-sigma}
\end{align}

Using Legendre's transform, we introduce the corresponding ``entropy'' functions 
\begin{align*}
	\widetilde{P}_\sigma(r) \coloneqq r \widetilde{F}_\sigma'(r) - \widetilde{F}_\sigma(r) \quad
    \text{and} \quad 
	\widetilde{P}_{c,\sigma}(r) \coloneqq r \widetilde{F}_{c,\sigma}'(r) - \widetilde{F}_{c,\sigma}(r).
\end{align*}
It follows that  $\widetilde{P}_{c,\sigma}'(r)=\widetilde{F}_{c,\sigma}''(r)>0$ for all $r>0$. Let $\rho_*$ be the only zero of $\widetilde{P}_{c}$ in $(\underline{\rho},1)$. We observe that $\widetilde{P}_{c}(\rho_*)=\widetilde{P}_{c,\sigma}(\rho_*)=0 $ whenever $0<\sigma\ll 1$. Thus, $\rho_*$ is the only zero of $\widetilde{P}_{c,\sigma}$ in $(\underline{\rho},1)$ as well. This also leads to the following formulation  
\begin{align*}
	\widetilde{F}_{c,\sigma}(r) 
    = r \int_{\rho_*}^r \frac{\widetilde{P}_{c,\sigma}(\tau)}{\tau^2} \,\mathrm{d} \tau + \frac{\widetilde{F}_{c,\sigma}(\rho_*)}{\rho_*} r  
    = r \int_{\rho_*}^r \frac{\widetilde{P}_{c,\sigma}(\tau)}{\tau^2} \,\mathrm{d} \tau + \widetilde{F}_{c,\sigma}'(\rho_*)r.
\end{align*}

\textbf{The additional potential $W_{\delta}$.} 
Let $ W \in C^2([\underline{\rho},1])$ be a given convex function that fulfills 
\begin{align}
   W(r) \geq 0 \quad \text{and} \quad W''(r) \geq 0, 
\quad \forall\, r\in [\underline{\rho},1]. 
   \label{W}
\end{align}
For convenience, we also introduce 
\begin{align}
	H(r) \coloneqq r W'(r) - W(r),\quad \forall\, r\in [\underline{\rho},1]. 
    \label{H def}
\end{align} 
Then the nonlinear function $W_\delta$ in 
\eqref{eqs:app-momentum-P-sigma} can be chosen 
by applying the following lemma, which is inspired by \cite{AbelsCMP2009} (see also \cite{FFHL2025}).
\begin{lemma}
\label{lower-and-up-bounds-for-rho-sigma}
	Let $W$ satisfy \eqref{W}, and let $\delta\in (0,1)$, $R>0$, $\theta \in (0,\underline{\rho})$, $c_0\in [\underline{\rho},1]$. There exists an extension $W_\delta\in C^2(\mathbb{R})$ with $W_\delta(r)=W(r)$ for $r\in [\underline{\rho},1]$, and $W_\delta(r)\geq 0$, $W_\delta''(r)\geq 0$ for all $r\in \mathbb{R}$, such that for any $\rho\in H^2(\mathbb{T}^3)$ with $\int_{\mathbb{T}^3} \rho\,\mathrm{d}x=c_0$, 
	\begin{align*}
		\text{if}\ \  \int_{\mathbb{T}^3}\Big(\frac{\delta}{2}|\Delta \rho|^2+ \delta W_\delta(\rho)\Big)\,\mathrm{d}x \leq R,\ \ \text{then} \quad  \rho(x) 
        \in (- \theta+\underline{\rho}, 1 + \theta)
		\quad 
		\text{for all}\ \ x\in \mathbb{T}^3.
    \end{align*}
\end{lemma}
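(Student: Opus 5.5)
The plan is to combine the Sobolev embedding $H^2(\mathbb{T}^3)\hookrightarrow C^{0,1/2}(\mathbb{T}^3)$ with a very steep growth of $W_\delta$ outside $[\underline{\rho},1]$. The embedding turns any point where $\rho$ leaves the interval into a small ball on which $\rho$ stays outside it. The growth of $W_\delta$ then makes $\int \delta W_\delta(\rho)\,\dx$ exceed $R$ on that ball.

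\emph{Step 1: Hölder control.} From the hypothesis, $\|\Delta\rho\|_{L^2}^2\le 2R/\delta$. Since the mean of $\rho$ equals $c_0\in[\underline{\rho},1]$ (up to the normalisation $|\mathbb{T}^3|=1$), elliptic regularity on the torus gives
\[
\|\rho\|_{H^2}\le C\big(|c_0|+\|\Delta\rho\|_{L^2}\big)\le C\big(1+(R/\delta)^{1/2}\big)=:K_\delta .
\]
Morrey's embedding in three dimensions then yields $|\rho(x)-\rho(y)|\le C_0K_\delta|x-y|^{1/2}$ for all $x,y$.

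\emph{Step 2: construction of $W_\delta$.} Outside $[\underline{\rho},1]$ we extend $W$ by its second-order Taylor polynomial at the endpoints. We then add a convex, nonnegative $C^2$ function $M_\delta\,g(r)$, where $g(r)=(r-1)_+^3+(\underline{\rho}-r)_+^3$. This function vanishes on $[\underline{\rho},1]$, is $C^2$ across the endpoints, and has $g''\ge0$. The Taylor extension of a convex $W$ with $W\ge0$ may become negative far away. To avoid this, we instead extend by $W(1)+W'(1)(r-1)+\tfrac12W''(1)(r-1)^2$ only when $W'(1)\ge0$. In general it is simpler to take the extension
\[
W_\delta(r)=W(1)+W'(1)(r-1)+\tfrac12 W''(1)(r-1)^2+M_\delta (r-1)^3
\]
for $r>1$ (analogously for $r<\underline{\rho}$). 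The cubic term dominates the possibly negative linear term once $M_\delta$ is large. To guarantee $W_\delta\ge0$ globally, we add a term like $A(r-1)_+^2$ with $A$ large instead of relying only on the cubic term. This keeps the regularity $C^1$ with a jump in the second derivative. We therefore mollify the transition slightly, or use $A(r-1)_+^3+B(r-1)_+^4$. The details are routine: any convex $C^2$ extension with $W_\delta(r)\ge m_\delta(\operatorname{dist}(r,[\underline{\rho},1]))$, for a prescribed increasing $m_\delta$, is admissible.

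\emph{Step 3: contradiction.} Suppose $\rho(x_0)\ge 1+\theta$ at some $x_0$ (the lower case is symmetric). By Step 1, $\rho\ge 1+\theta/2$ on the ball $B_r(x_0)$ with $r=(\theta/(2C_0K_\delta))^2$, whose measure is $c\,r^3$. Hence
\[
R\ge \delta\int_{B_r}W_\delta(\rho)\,\dx\ge \delta\,c\,r^3\,\inf_{s\ge\theta/2}m_\delta(s).
\]
We choose $m_\delta$ (equivalently the constants $A,B$ in Step 2) so that $\delta c r^3 m_\delta(\theta/2)>R$. This is possible because $r$ depends only on $\theta,R,\delta$ and not on $\rho$. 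This gives the contradiction, and by continuity of $\rho$ we conclude $\rho(x)\in(\underline{\rho}-\theta,1+\theta)$ for all $x$.

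The main obstacle is Step 2: making the extension simultaneously $C^2$, convex, nonnegative, equal to $W$ on $[\underline{\rho},1]$, and with a lower bound outside as large as the $\delta$-dependent threshold requires. The nonnegativity far from the interval is the delicate point when $W'(1)<0$ or $W'(\underline{\rho})>0$. It is handled by the added steep polynomial terms, whose $C^2$ matching at the endpoints follows from their vanishing to third order there.
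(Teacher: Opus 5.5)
Your argument is the same as the paper's: $W_\delta\ge0$ and the fixed mean $c_0$ give a $\delta$-dependent $H^2$ bound and hence a uniform $C^{0,1/2}$ modulus. So a value $\rho(x_0)\ge1+\theta$ forces $\rho\ge1+\theta/2$ on a ball whose measure is bounded below independently of $\rho$ (the paper's $\kappa=\inf_{x_0}|B_l(x_0)\cap\mathbb{T}^3|$), and on that ball a $W_\delta$ exceeding $\frac{R+1}{\delta\kappa}+\sup W$ at distance $\ge\theta/2$ from $[\underline{\rho},1]$ contradicts the bound $R$.

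For Step 2, the plain cubic extension $W(1)+W'(1)(r-1)+\tfrac12W''(1)(r-1)^2+M_\delta(r-1)^3$ for $r>1$ (mirrored at $\underline{\rho}$) is already $C^2$, convex, nonnegative, increasing past $1+\theta/2$ and as large there as needed once $M_\delta$ is large, so the $(r-1)_+^2$/mollification detour is unnecessary. The exception is $W(1)=0>W'(1)$ (or $W(\underline{\rho})=0<W'(\underline{\rho})$), e.g.\ $W(r)=1-r$: there every $C^1$ extension is negative just beyond the endpoint, so your closing claim fails and the lemma itself tacitly needs $W$ to exclude this case (e.g.\ $W\equiv0$); the paper's proof, which only asserts that $W_\delta$ can be chosen, passes over this as well.
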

\begin{proof} 
For reader's convenience, we sketch the proof by adapting the argument for \cite[Lemma 2.3]{AbelsCMP2009}. Due to the Sobolev embedding $H^2(\mathbb{T}^3)\hookrightarrow C^\frac12(\mathbb{T}^3)$, it holds 
$$
|\rho(x)-\rho(x_0)|\leq C_1\|\rho\|_{H^2(\mathbb{T}^3)}|x-x_0|^\frac12,\quad \forall\, x, x_0\in \mathbb{T}^3. 
$$
Since  
$\|\rho\|_{H^2(\mathbb{T}^3)}\leq R'=R'(R,\delta,c_0)$, 
we have 
$$
|\rho(x)-\rho(x_0)|\leq C_1R'|x-x_0|^\frac12\leq \frac{\theta}{2},\quad \forall\, x\in B_l(x_0)\cap \mathbb{T}^3 \ \ \text{with}\ \ l= \left(\frac{\theta}{2C_1R'}\right)^2.
$$
In addition, $\kappa :=\inf_{x_0\in \mathbb{T}^3}|B_l(x_0)\cap \mathbb{T}^3|>0$. Then we can choose a function $W_\delta\in C^2(\mathbb{R})$ such that $W_\delta(r)=W(r)$ for $r\in [\underline{\rho},1]$, and
$$
W_\delta(r)\geq \frac{R+1}{\delta \kappa} + \sup_{r\in [\underline{\rho},1]} W(r), \quad W_\delta''(r)\geq 0, \quad \text{for}\ r\leq - \frac{\theta}{2} +\underline{\rho}, \ \text{or}\ r\geq 1+ \frac{\theta}{2}.
$$
Assume, in contradiction, that there exists a point $x_0\in \mathbb{T}^3$ such that $\rho(x_0)\geq 1+\theta$. Then for all $x\in B_l(x_0)\cap \mathbb{T}^3$, we have $\rho(x)\geq 1-\theta/2$. As a result, 
$$
\int_{\mathbb{T}^3}\Big(\frac{\delta}{2}|\Delta \rho|^2+ \delta W_\delta(\rho)\Big)\,\mathrm{d}x \geq 
\int_{B_l(x_0)\cap \mathbb{T}^3} \delta W_\delta(\rho)\,\mathrm{d}x \geq R+1,
$$
which leads to a contradiction. Hence, $\rho(x)<1+\theta$ for all $x\in \mathbb{T}^3$. In a similar manner, we can conclude $\rho(x)>-\theta+\underline{\rho}$ in $\mathbb{T}^3$. The proof is complete. 
\end{proof}

For fixed $0<\sigma,\delta\ll 1$, we define the approximate energy  
\begin{align}
	E_{\sigma,\delta}(\bu,\rho) \coloneqq
	\int_{\bbt^3} \Big(
	\onehalf \rho \absm{\bu}^2
	+ \frac{\zeta^2}{2} \absm{\nabla {\rho}}^2
	+ \widetilde{F}_\sigma(\rho)	
    + \frac{\delta}{2} \absm{\Delta \rho}^2
	+ \delta W_\delta(\rho)
	\Big)\,\dx.
    \label{appro-energy}
\end{align}
The corresponding initial energy is given by 
\begin{align}
	E_{\sigma,\delta}(\bu_0,\rho_{0,\delta}) \coloneqq
	\int_{\bbt^3} \Big(
	\onehalf \rho_{0,\delta} \absm{\bu_0}^2
	+ \frac{\zeta^2}{2} \absm{\nabla {\rho_{0,\delta}}}^2
	+ \widetilde{F}_\sigma(\rho_{0,\delta})	
    + \frac{\delta}{2} \absm{\Delta \rho_{0,\delta}}^2
	+ \delta W(\rho_{0,\delta})
	\Big)\,\dx,
    \label{appro-energy-0}
\end{align}
thanks to the fact $\underline{\rho}\leq \rho_{0,\delta}\leq 1$.
Take 
$$
R= \big(E_{\sigma,\delta}(\bu_{0},\rho_{0,\delta})+C_*\big)\big(1+2\delta \omega T e^{2\delta \omega T}\big)+1,\quad \theta = \frac{1}{2}\underline{\rho},\quad c_0=\int_{\mathbb{T}^3} \rho_{0,\delta}\,\mathrm{d}x,
$$
where the constant $C_*>0$ is the same as in \eqref{low-bd-F1}. Then we can choose the admissible extension $W_\delta$ by Lemma \ref{lower-and-up-bounds-for-rho-sigma}. Once $W_\delta$ is constructed, we further set 
\begin{align}
	H_\delta(r) \coloneqq r W_\delta'(r) - W_\delta(r),\quad \forall\, r\in \mathbb{R}. 
    \label{H def-d}
\end{align}

 The main result of this section is the existence of global weak solutions to the regularized problem \eqref{eqs:app-P-sigma}. More precisely, we establish the following:
\begin{proposition}
	\label{prop:weak-solution-sigma}
	Let $0<\delta,\sigma \ll 1$ be arbitrarily small but fixed constants, and let $T\in (0,\infty)$. Given initial data $(\bu_{0},\rho_{0}) \in L^2(\bbt^3) \times H^1(\bbt^3)$ with $\rho_0\in (\underline{\rho},1)$ almost everywhere in $\mathbb{T}^3$, the associated regularized problem \eqref{eqs:app-P-sigma} with the aforementioned functions $\widetilde{P}_\sigma$ and $W_\delta$ admits a global weak solution $(\mathbf{u},\rho)$ on $[0,T]$ such that 
    \begin{align}
     & \mathbf{u} \in BC_w([0,T];L^2(\mathbb{T}^3)) \cap L^2(0,T;H^2(\mathbb{T}^3)),\notag \\
     & \rho\in BC([0,T];H^2(\mathbb{T}^3))\cap L^2(0,T;H^3(\mathbb{T}^3)) \cap H^1(0,T;H^1(\mathbb{T}^3)),
     \notag \\ 
     & \frac{1}{2}\underline{\rho}<  \rho(x,t)< 1+\frac{1}{2}\underline{\rho} \quad  \text{for all}\ \ (x,t)\in \mathbb{T}^3\times [0,T].
     \label{UL-rho-sigma} 
    \end{align} 
    The solution $(\bu,\rho)$ satisfies 
 \begin{align}
 \label{eqs:app-mass-P-sigma-delta}
		& \pt \rho + \Div (\rho \bu) = \delta \Delta \rho,\quad \text{a.e.~in } Q_T,
 \end{align}
 with $\rho|_{t=0}=\rho_{0,\delta}$ almost everywhere in $\mathbb{T}^3$, and the following weak formulation  
	\begin{align}
		\nonumber
		& 
		- \int_{\mathbb{T}^3} \rho_{0,\delta}\mathbf{u}_0\cdot\bphi(\cdot,0)\,\dx - \int_0^T \int_{\bbt^3} \rho \bu \cdot \pt \bphi \,\dxdt \\ 
    	\nonumber
    	& \qquad - \int_0^T \int_{\bbt^3} 
    	\rho \bu \otimes \bu : \nabla \bphi \,\dxdt
        + \int_0^T \int_{\bbt^3} \rho \bbd \bu : \bbd \bphi \,\dxdt \\
		\nonumber
		& \quad = 
		  \int_0^T \int_{\bbt^3} \Big(\widetilde{P}_\sigma(\rho)
		+ \delta H_\delta(\rho)
		\Big) \Div \bphi \,\dxdt 
        -\delta  \int_0^T \int_{\bbt^3} \rho \nabla \Delta^2\rho  \cdot \bphi \,\dxdt \\
		\nonumber
		& \qquad 
        - \delta\int_0^T \int_{\bbt^3}  (\nabla \rho \cdot \nabla \bu) \cdot \bphi \,\dxdt 
		-\delta  \int_0^T \int_{\bbt^3} \Delta \bu \cdot \Delta \bphi \,\dxdt 
		 \\
		\nonumber
		& \qquad - \zeta^2 \int_0^T \int_{\bbt^3} (\rho  \Delta \rho) \Div \bphi \,\dxdt
		- \zeta^2 \int_0^T \int_{\bbt^3}  \Delta \rho (\nabla \rho \cdot \bphi) \,\dxdt
		 \\ 
         & \qquad + \frac{1}{\alpha^2}\int_0^T \int_{\bbt^3}  (\Delta^{-1} \Div \bu) \Div \bphi \,\dxdt,\quad 
         \forall\, \boldsymbol{\varphi}\in C_{c}^{\infty}([0,T); C^{\infty}(\mathbb{T}^{3})). 
         \label{eqs:weak-formulation-ddt-sigma} 
	\end{align}
    Here, the nonlinear function $H_\delta$ is defined as in \eqref{H def-d} and the term $\int_0^T \int_{\bbt^3} \rho \nabla \Delta^2 \rho  \cdot \bphi \,\dxdt$ should be understood as 
	\begin{align}
		\nonumber
        &\int_0^T \int_{\bbt^3} \rho \nabla \Delta^2 \rho  \cdot \bphi \,\dxdt \\ 
        \nonumber
        &\quad \coloneqq \int_0^{T} \int_{\mathbb{T}^{3}}   \nabla^2\rho:(\nabla\Delta\rho \otimes \bphi)\,\dxdt  
        + \int_0^{T} \int_{\mathbb{T}^{3}}   (\nabla\Delta\rho\otimes \nabla\rho) : \nabla\bphi\,\dxdt\\
        &\qquad\ +\int_0^{T} \int_{\mathbb{T}^{3}}   \nabla\rho\cdot\nabla\Delta\rho  \Div\bphi\,\dxdt + \int_0^{T} \int_{\mathbb{T}^{3}}   \rho \nabla\Delta \rho\cdot \nabla\Div\bphi\,\dxdt.
        \label{higher-terms}
	\end{align}
Moreover, we have the conservation of mass 
\begin{align}
		\int_{\bbt^3} \rho(\cdot,t) \,\dx
		= \int_{\bbt^3} \rho_{0,\delta} \,\dx,\quad \forall\, t\in [0,T],
        \label{conservation law-mass-sd}
\end{align}
and the following energy estimate  
\begin{align}
	\nonumber
	& \sup_{0 \leq t \leq T} E_{\sigma,\delta}(\bu(t),\rho(t))
	+ \int_0^T \int_{\bbt^3} \rho \absm{\bbd \bu}^2 \,\dxdt
	+ \frac{1}{\alpha^2} \int_0^T \int_{\bbt^3} \absm{\nabla \Delta^{-1} \Div \bu}^2 \,\dxdt \\
	\nonumber
	&\qquad  + \delta \int_0^T \int_{\bbt^3} \absm{\Delta \bu}^2 \,\dxdt
	+ \delta^2 \int_0^T \int_{\bbt^3} \absm{\nabla\Delta \rho}^2 \,\dxdt
	+ \delta \zeta^2 \int_0^T \int_{\bbt^3} \absm{\Delta \rho}^2 \,\dxdt 
    \\
	& \quad \leq  E_{\sigma,\delta}(\bu_{0},\rho_{0,\delta}) 
    + \big(E_{\sigma,\delta}(\bu_{0},\rho_{0,\delta}) +C_*\big)
    2\delta \omega T e^{2\delta \omega T}.
	\label{eqs:energy-inequality-sigma}
    \end{align}
In \eqref{eqs:energy-inequality-sigma}, the approximate energy $E_{\sigma,\delta}$ and its initial value are given by \eqref{appro-energy}, \eqref{appro-energy-0}, respectively, while the constant $C_*>0$ is the same as in \eqref{low-bd-F1}.
\end{proposition}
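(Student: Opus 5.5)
We construct the solution by a Faedo--Galerkin scheme for the velocity, coupled with exact solvability of the parabolic continuity equation, following the approach of compressible Navier--Stokes--Korteweg approximations. Let $X_n=\mathrm{span}\{e^{i\mathbf{k}\cdot x}:|\mathbf{k}|\le n\}$ (vector valued). For a given $\bu\in C([0,T];X_n)$, the equation \eqref{eqs:app-mass-P-sigma} with data $\rho_{0,\delta}$ is a linear uniformly parabolic equation with smooth coefficients. It therefore admits a unique smooth solution $\rho=\rho[\bu]$. By the maximum principle this solution satisfies
\[
\inf\rho_{0,\delta}\, e^{-\int_0^t\|\Div\bu\|_\infty}\le \rho\le \sup\rho_{0,\delta}\, e^{\int_0^t\|\Div\bu\|_\infty},
\]
so $\rho$ is positive on the Galerkin level, and the map $\bu\mapsto\rho[\bu]$ is Lipschitz on bounded sets in the relevant norms. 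We then seek $\bu_n\in C^1([0,T_n];X_n)$ solving the projected momentum equation, in which the mass matrix $\int\rho\,\bu\cdot\bw$ is invertible because $\rho>0$. A Schauder/contraction argument yields a local solution.

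\textbf{Step 1 (energy estimate at the Galerkin level).} Test the momentum equation with $\bu_n$ and use \eqref{eqs:app-mass-P-sigma}. The identity $\tfrac12\int\rho\pt|\bu|^2+\dots$ produces the error $\tfrac{\delta}{2}\int\Delta\rho|\bu|^2$, which is exactly cancelled by $-\delta\int(\nabla\rho\cdot\nabla\bu)\cdot\bu$. The remaining terms are handled as follows:
\begin{itemize}
\item The pressure terms $\nabla\widetilde P_\sigma(\rho)+\delta\rho\nabla W_\delta'(\rho)=\rho\nabla(\widetilde F_\sigma'+\delta W_\delta')$ combine with $\pt\rho=-\Div(\rho\bu)+\delta\Delta\rho$ to give $\ddt\int(\widetilde F_\sigma+\delta W_\delta)$ plus dissipation $\delta\int(\widetilde F_\sigma''+\delta W_\delta'')|\nabla\rho|^2$.
\item Similarly, the capillarity $\zeta^2\rho\nabla\Delta\rho$ gives $\ddt\frac{\zeta^2}{2}\int|\nabla\rho|^2+\delta\zeta^2\int|\Delta\rho|^2$.
\item The term $-\delta\rho\nabla\Delta^2\rho$ gives $\ddt\frac\delta2\int|\Delta\rho|^2+\delta^2\int|\nabla\Delta\rho|^2$.
\item The damping term $-\alpha^{-2}\nabla\Delta^{-1}\Div\bu$ gives $\alpha^{-2}\int|\nabla\Delta^{-1}\Div\bu|^2$.
\end{itemize}
The only indefinite contribution is the concave part: $\widetilde F_\sigma''\ge -\omega\zeta^2$, so $\delta\int\widetilde F_\sigma''|\nabla\rho|^2\ge-\delta\omega\zeta^2\int|\nabla\rho|^2\ge -2\delta\omega(E_{\sigma,\delta}+C_*)$. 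Gronwall, using \eqref{low-bd-F1}, then gives \eqref{eqs:energy-inequality-sigma} uniformly in $n$. Mass conservation \eqref{conservation law-mass-sd} follows by integrating the continuity equation. The bound on $E_{\sigma,\delta}$ is exactly the $R$ chosen before the proposition, so Lemma \ref{lower-and-up-bounds-for-rho-sigma} yields \eqref{UL-rho-sigma} for every $t$ and every $n$. This uniform positivity prevents degeneration of the mass matrix and lets us extend to $T_n=T$.

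\textbf{Step 2 (uniform bounds).} From the energy estimate we obtain the following uniform bounds:
\begin{itemize}
\item $\bu_n\in L^\infty L^2\cap L^2H^2$, via $\delta\|\Delta\bu\|^2$ together with the $L^2$ bound;
\item $\rho_n\in L^\infty H^2\cap L^2H^3$;
\item $\pt\rho_n\in L^2H^1$, from the equation, since $\rho\bu\in L^2H^2$ and $\Delta\rho\in L^2H^1$.
\end{itemize}
Parabolic regularity together with the Aubin--Lions--Simon lemma gives $\rho_n\to\rho$ strongly in $C([0,T];H^{2-\epsilon})\cap L^2H^{3-\epsilon}$. For $\rho_n\bu_n$ we bound $\pt P_n(\rho_n\bu_n)$ in $L^{4/3}(H^{-k})$ from the equation, which yields strong convergence of $\rho_n\bu_n$ in $L^2H^{-1}$; combined with weak convergence of $\bu_n$ in $L^2H^2$ this gives $\rho_n\bu_n\otimes\bu_n\to\rho\bu\otimes\bu$. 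Since $\rho$ is bounded away from zero, we also get $\bu_n\to\bu$ strongly in $L^2H^{2-\epsilon}$. Nonlinearities such as $\widetilde P_\sigma(\rho_n)$ and $H_\delta(\rho_n)$ converge because $\rho_n$ is uniformly bounded and converges uniformly, and these functions are $C^1$. The higher-order term is written in the form \eqref{higher-terms}, where only $\nabla^3\rho_n\in L^2$ appears. This term is products of a strongly convergent factor with a weakly convergent one, so it passes to the limit.

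\textbf{Step 3 (properties of the limit).} Weak lower semicontinuity preserves \eqref{eqs:energy-inequality-sigma}, and the pointwise bounds persist. The regularity $\rho\in BC([0,T];H^2)$ follows from $\rho\in L^2H^3\cap H^1H^1$ by interpolation, and $\bu\in BC_w(L^2)$ by the standard argument. The main obstacle we expect is Step 1's consistency at the Galerkin level: the cancellation of $\delta\Delta\rho|\bu|^2$ and the $W_\delta$/pressure identities must hold with $\rho$ solving the exact (non-projected) parabolic equation, while the time interval and the invocation of Lemma \ref{lower-and-up-bounds-for-rho-sigma} must be closed simultaneously by a continuity argument in $t$.
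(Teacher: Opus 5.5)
Your proposal is correct and follows essentially the same route as the paper. Both use a Galerkin approximation in the velocity with the continuity equation solved exactly, a fixed-point argument for local existence, and the Galerkin energy identity (with the $\delta\Delta\rho|\bu|^2$ cancellation and $\widetilde F_\sigma''\ge -\omega\zeta^2$) closed by Gronwall and \eqref{low-bd-F1}. They then apply Lemma~\ref{lower-and-up-bounds-for-rho-sigma} for the uniform bounds on $\rho$ and use Aubin--Lions compactness to pass to the limit.

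One small correction: the Galerkin initial energy involves $\bu_{0,n}$ rather than $\bu_0$, so the prescribed $R$ (with its extra $+1$) covers it only for $n$ large. At a fixed small $n$, the extension to $[0,T]$ should therefore rest on the maximum-principle bound and the finite dimensionality of $X_n$, as in the paper, rather than on Lemma~\ref{lower-and-up-bounds-for-rho-sigma}.
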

 
\subsection{The Faedo--Galerkin approximation}
Motivated by  \cite{AS2022,FNP2001,MSZ2025,VY2016}, we apply a suitable Faedo--Galerkin approximation scheme. For $N \in \mathbb{Z}^+$, we introduce the finite-dimensional space $X_N = \mathrm{span} \left\{\be_1, \be_2, \dots, \be_N\right\}$, where  $\{\be_i\}\subset C^\infty(\mathbb{T}^3)$ forms an orthonormal basis of $L^2(\bbt^3)$ as well as an orthogonal basis of $H^2(\bbt^3)$. A typical choice could be eigenfunctions of the (minus) Laplacian on $\mathbb{T}^3$.  
Define 
$$\bu_N = \sum_{i=1}^N \lambda_i(t) \be_i(x)\quad \text{and}\quad \bu_{0,N} = \sum_{i=1}^N (\bu_0,\mathbf{e}_i)_{L^2(\mathbb{T}^3)} \be_i(x)
$$ 
for some functions $\lambda_i(t)$ ($i=1,...,N$) that are continuous in $[0,T]$. Then $\bu_N \in C([0,T];X_N)$, which also implies $\bu_N \in C([0,T];C^k(\bbt^3))$ for any $k\in \mathbb{N}$. The Faedo--Galerkin approximation is given by 
\begin{align}
	\nonumber
	&  \int_{\bbt^3} \partial_t(\rho_N \bu_N) \cdot \bphi \,\dx -\int_{\bbt^3} \rho_N \left(
	 \bu_N \otimes \bu_N
	- \bbd \bu_N \right) : \nabla \bphi \,\dx \\
	\nonumber
	&\quad  = 
	 \int_{\bbt^3} \Big(\widetilde{P}_\sigma(\rho_N)
	+ \delta H_\delta(\rho_N)
	\Big) \Div \bphi \,\dx 
    - \delta \int_{\bbt^3}  \rho_N \nabla \Delta^2 \rho_N  \cdot \bphi \,\dx \\
	\nonumber
	& \qquad 
	- \delta \int_{\bbt^3}  (\nabla \rho_N \cdot \nabla \bu_N )\cdot \bphi \,\dx 
    - \delta \int_{\bbt^3}  \Delta \bu_N \cdot \Delta \bphi \,\dx 
	  \\
	& \qquad - \zeta^2 \int_{\bbt^3}  (\rho_N \Delta \rho_N) \Div \bphi \,\dx
	- \zeta^2 \int_{\bbt^3} \Delta \rho_N(\nabla \rho_N  \cdot \bphi) \,\dx
	\notag \\
    &\qquad + \frac{1}{\alpha^2}  \int_{\bbt^3} (\Delta^{-1} \Div \bu_N) \Div \bphi \,\dx,\quad \forall\, t\in (0,T),\ \bphi\in X_N,
	\label{eqs:weak-formulation-ddt-N}
\end{align}
where $\rho_N$ satisfies 
\begin{align}
 \label{eqs:app-mass-P-sigma-delta-N}
		& \pt \rho_N + \Div (\rho_N \bu_N) = \delta \Delta \rho_N,\quad \text{a.e.~in }\ Q_T,
 \end{align}
subject to the initial datum $\rho_{0,\delta}$. 

First, for any given $\bu \in C([0,T];X_N)$, thanks to the classical theory of parabolic equations (cf. e.g., \cite[Section 2]{FNP2001}), equation \eqref{eqs:app-mass-P-sigma} subject to the initial datum $\rho_{0,\delta}$ (see \eqref{eqs:initial data}) admits a unique classical solution $\rho \in C^1([0,T];C^{k}(\bbt^3)) \cap C([0,T];C^{k+2}(\bbt^3))$ for any $k\in \mathbb{N}$ such that 
\begin{align*}
\norm{\rho}_{C([0,T];C^{k+2}(\bbt^3))}
	+ \norm{\pt \rho}_{C([0,T];C^{k}(\bbt^3))}
	\leq C(\delta,k) \big(\norm{\rho_{0,\delta}}_{C^{2+k}(\bbt^3)} + \norm{\Div(\rho \bu)}_{C([0,T];C^{k}(\bbt^3))}\big),
\end{align*}
and for all $(x,t) \in \mathbb{T}^3\times [0,T]$, 
\begin{align}
	\inf_{y \in \bbt^3} \rho_{0,\delta}(y) e^{- \int_0^T\norm{\Div \bu(\tau)}_{L^\infty(\bbt^3)}\,\mathrm{d}\tau} \leq \rho(x,t) \leq \sup_{y \in \bbt^3} \rho_{0,\delta}(y) e^{\int_0^T\norm{\Div \bu(\tau)}_{L^\infty(\bbt^3)}\,\mathrm{d}\tau},
    \label{rho-UL}
\end{align}
Thus, $0 < \vartheta \leq \rho(x,t) \leq \vartheta^{-1} < \infty $ holds in $\mathbb{T}^3\times [0,T]$ for some constant $\vartheta=\vartheta(\underline{\rho},N,\delta,\sigma) > 0$.  
The above well-posedness result allows us to introduce a linear continuous operator
\begin{align*}
	\cS: C([0,T];X_N) & \to C([0,T];C^{k+2}(\bbt^3)),\\
    \mathbf{u} & \mapsto \rho, 
\end{align*}
where $\rho=\cS(\bu)$ is the unique classical solution of \eqref{eqs:app-mass-P-sigma} subject to the given velocity field $\mathbf{u}$ and the initial condition  $\rho_N|_{t=0}=\rho_{0,\delta}$. 
Moreover, it holds   
\begin{align*}
	\norm{\cS(\bu_1) - \cS(\bu_2)}_{C([0,T];H^1(\bbt^3))}
	\leq C \norm{\bu_1 - \bu_2}_{C([0,T];H^1(\bbt^3))},
\end{align*}
for any $\mathbf{u}_1,\mathbf{u}_2\in C([0,T];X_N)$, where the constant $C>0$ depends on $T$, $\|\mathbf{u}_i\|_{L^\infty(0,T;W^{2,\infty}(\mathbb{T}^3))}$, $i=1,2$ (see \cite[Lemma 2.2]{FNP2001}). Continuous dependence estimates with higher-order (resp. lower-order) norms on the left-hand (resp. right-hand) side can be easily obtained using the fact that $X_N$ is finite dimensional. 

Next, we define the set 
\begin{align*}
	\cB = \left\{
	\bu \in C([0,T^*];X_N): \norm{\bu_N - \bu_{0,N}}_{C([0,T^*];L^{2}(\mathbb{T}^3))} \leq 1
	\right\}
\end{align*}
for some $T^* \in (0, T]$ that will be chosen later. Consider the operator $\cT: \cB \to C([0,T^*];X_N)$ given by 
\begin{align*}
	\cT(\bu_N) \coloneqq \cM[\cS(\bu_N)(t)]^{-1} \Big(
	\cM[\rho_{0,\delta}](\bu_{0,N}) + \int_0^t \cN(\cS(\bu_N), \bu_N)(\tau) \,\d \tau
	\Big). 
\end{align*}
Here, $\cM[\rho] : X_N \to X_N^*$ is defined as
\begin{align*}
	\inner{\cM[\rho](\bu)}{\bphi}_{X_N^*,X_N} = \int_{\bbt^3} \rho \bu \cdot \bphi \,\dx, \quad \forall\, \bu, \bphi \in X_N,
\end{align*}
and $\mathcal{N}(\rho,\mathbf{u})$ satisfies  
\begin{align*}
	&\inner{\cN(\rho, \bu)}{\bphi}_{X_N^*,X_N}\\
	&\quad  \coloneqq \int_{\bbt^3} \Big(
	- \Div(\rho \bu \otimes \bu)
	+ \Div (\rho \bbd \bu)
	- \nabla \widetilde{P}_\sigma(\rho)
	- \delta \rho \nabla W_\delta'(\rho)
	\Big) \cdot \bphi \,\dx 
    \\
	& \qquad\, + \int_{\bbt^3} \Big(
	- \delta \rho \nabla \Delta^2 \rho
	- \delta \nabla \rho \cdot \nabla \bu
	-\delta \Delta^2 \bu
	+ \zeta^2 \rho \nabla \Delta \rho
	- \frac{1}{\alpha^2} \nabla \Delta^{-1} \Div \bu
	\Big) \cdot \bphi \,\dx
\end{align*}
for any $\mathbf{u},\bphi \in X_N$.
For $\rho$ (resp. $\rho_1$, $\rho_2$) in the class $\big\{\rho\in L^1(\mathbb{T}^3)\ |\ \inf_{x\in \mathbb{T}^3} \rho(x) \geq \vartheta_N\big\}$
with some $\vartheta_N>0$, the operator  $\cM[\rho]$ is invertible such that  
\begin{align*}
	\norm{\cM[\rho]^{-1}}_{\cL(X_N^*, X_N)} \leq  \frac{1}{\vartheta_N},
\end{align*}
and
\begin{align}
	\label{eqs:M-Lipschitz}
	\norm{\cM[\rho_1]^{-1} - \cM[\rho_2]^{-1}}_{\cL(X_N^*, X_N)} \leq C(N,\vartheta_N) \norm{\rho_1 - \rho_2}_{L^1(\bbt^3)}.
\end{align}

For $T^*=T^*(N)>0$ sufficiently small, it is straightforward to check that $\cT(\cB) \subset \cB$. Thanks to the estimate on $\partial_t \mathcal{S}(\mathbf{u}_N)$, the continuity of $\mathcal{S}$ and \eqref{eqs:M-Lipschitz}, we find that $\cT(\cB)$ is compact due to the Arzel\`{a}--Ascoli theorem. These observations, together with the continuity of $\mathcal{T}$ on $\mathcal{B}$, allow us to apply Schauder's fixed point theorem. This yields a local-in-time solution $\mathbf{u}_N \in \mathcal{B}$ to the following integral equation, provided that $T^*$ is sufficiently small (cf. \cite{AS2022,MSZ2025,VY2016}):  
\begin{align}
	\bu_N(t) = \cM[\cS(\bu_N)(t)]^{-1} \Big(
	\cM[\rho_{0,\delta}](\bu_{0,N}) + \int_0^t \cN(\cS(\bu_N), \bu_N)(\tau) \,\d \tau
	\Big),\quad \forall\, t\in [0,T^*]. 
    \label{F-G}
\end{align}
Noticing that the operator $\cN$ is continuous in time and thus $t \mapsto \int_0^t \cN(\rho_N, \bu_N)(\tau) \,\d \tau$ is differentiable,  
we also have $\partial_t \bu_N \in C([0,T^*];X_N)$. Since $\mathbf{u}_N$ is sufficiently regular, its uniqueness can easily be verified using the energy method.    

In summary, for every $N\in \mathbb{Z}^+$, we have shown the existence and uniqueness of a local-in-time classical solution 
$$
(\mathbf{u}_N,\rho_N)=(\mathbf{u}_N,\mathcal{S}(\mathbf{u}_N))\quad \text{on}\quad [0,T^*]
$$ 
to the Faedo--Galerkin approximation scheme for problem \eqref{eqs:app-P-sigma} that is equivalently given by \eqref{F-G}. Applying $\cM[\rho_N(t)]$ to both sides of \eqref{F-G}, differentiating the resultant in time and then testing it by $\bphi\in X_N$, we recover the weak formulation \eqref{eqs:weak-formulation-ddt-N} on $(0,T^*)$ with the aid of \eqref{eqs:app-mass-P-sigma-delta-N}. 

\subsection{Proof of Proposition \ref{prop:weak-solution-sigma}}
The first step is to show that $T^*(N)=T$ for every $N\in \mathbb{Z}^+$. For this purpose, we derive uniform estimates for $\mathbf{u}_N$ on $[0,T^*]$ that may depend on $T$ but not on $T^*$. Taking $\bphi = \bu_N$ in \eqref{eqs:weak-formulation-ddt-N} and using \eqref{eqs:app-mass-P-sigma-delta-N}, we obtain the following energy equality
\begin{align}
	\nonumber
	& \frac{\d}{\dt} E_{\sigma,\delta}(\bu_N,\rho_N)
	  + \int_{\bbt^3} \rho_N \absm{\bbd \bu_N}^2 \,\dx 
    + \delta \int_{\bbt^3} \absm{\Delta \bu_N}^2 \,\dx
	+ \frac{1}{\alpha^2} \int_{\bbt^3} \absm{\nabla \Delta^{-1} \Div \bu_N}^2 \,\dx \\
	\nonumber
	&\qquad  
	+ \delta^2 \int_{\bbt^3} \absm{\nabla \Delta \rho_N}^2 \,\dx 
     + \delta \zeta^2 \int_{\bbt^3} \absm{\Delta \rho_N}^2 \,\dx
	\\
	&\quad 
	=- \delta \int_{\bbt^3} \big(\widetilde{F}_\sigma''(\rho_N) + \delta W_\delta''(\rho_N)\big) \absm{\nabla \rho_N}^2 \,\dx
    \notag 
    \\
    &\quad \leq \delta \omega \zeta^2 \int_{\bbt^3} \absm{\nabla \rho_N}^2 \,\dx,
    \quad \forall\, t\in (0,T^*),
	\label{eqs:energy-N}
\end{align}
where
\begin{align*}
	E_{\sigma,\delta}(\bu_N,\rho_N) \coloneqq
	\int_{\bbt^3} \Big(
	\onehalf \rho_N \absm{\bu_N}^2
	+ \frac{\zeta^2}{2} \absm{\nabla {\rho_N}}^2
	+ \widetilde{F}_\sigma(\rho_N)
    + \frac{\delta}{2} \absm{\Delta \rho_N}^2 
	+ \delta W_\delta(\rho_N)
	\Big)\,\dx.
\end{align*}
In the derivation of \eqref{eqs:energy-N}, we have used the facts 
$$
\widetilde{F}_\sigma''(\rho_N) = \widetilde{F}_{c,\sigma}''(\rho_N) - \omega \zeta^2  > - \omega \zeta^2 \quad \text{and}\quad W_\delta''(\rho_N) \geq 0.
$$
Hence, an application of Gronwall's lemma to \eqref{eqs:energy-N} yields 
\begin{align}
\sup_{0 \leq t \leq T^*} \big(E_{\sigma,\delta}(\bu_N(t),\rho_N(t)) + C_*\big) 
\leq \big(E_{\sigma,\delta}(\bu_{0,N},\rho_{0,\delta})+C_*\big) e^{2 \delta \omega T^*}
\label{eqs:energy-inequality-N-a}
\end{align}
and then 
\begin{align}
	\nonumber
	  & \sup_{0 \leq t \leq T^*} \big(E_{\sigma,\delta}(\bu_N(t),\rho_N(t)) + C_*\big)  + \int_0^{T^*} \int_{\bbt^3} \rho_N \absm{\bbd \bu_N}^2 \,\dxdt  
    \\
	\nonumber
	& \qquad + \delta \int_0^{T^*} \int_{\bbt^3} \absm{\Delta \bu_N}^2 \,\dxdt + \frac{1}{\alpha^2} \int_0^{T^*} \int_{\bbt^3} \absm{\nabla \Delta^{-1} \Div\bu_N}^2 \,\dxdt 
    \\
    \nonumber 
	&\qquad + \delta^2 \int_0^{T^*} \int_{\bbt^3} \absm{\nabla \Delta \rho_N}^2 \,\dxdt
	+ \delta \zeta^2 \int_0^{T^*} \int_{\bbt^3} \absm{\Delta \rho_N}^2 \,\dxdt \\
	&\quad  \leq E_{\sigma,\delta}(\bu_{0,N},\rho_{0,\delta}) + \big(E_{\sigma,\delta}(\bu_{0,N},\rho_{0,\delta})+C_*\big) 2\delta \omega T^* e^{2 \delta \omega T^*}.
	\label{eqs:energy-inequality-N}
\end{align}
The right-hand side of \eqref{eqs:energy-inequality-N-a}, \eqref{eqs:energy-inequality-N} can be controlled by a positive constant depending on $E_{\sigma,\delta}(\bu_{0},\rho_{0,\delta})$, $C_*$ and $T$, which is independent of $T^*$. Recalling that $X_N$ is finite dimensional, from \eqref{rho-UL} and \eqref{eqs:energy-inequality-N}, we can deduce that $0 < \vartheta \leq \rho_N(x,t) \leq \vartheta^{-1}$ in $\mathbb{T}^3\times [0,T^*]$ for some  $\vartheta=\vartheta(\underline{\rho},N,\delta,\sigma,T,C_*) > 0$. Combining these bounds of $\rho_N$ and uniform-in-time estimates of \eqref{eqs:energy-inequality-N}, for each fixed $N\in \mathbb{Z}^+$, we can extend the local solution $(\mathbf{u}_N,\rho_N)$ to the entire interval $[0, T]$. 

In the following, we study the limit as $N\to \infty$. At this stage, we cannot take advantage of the finite dimensionality of $X_N$. Nevertheless, the estimate \eqref{eqs:energy-inequality-N} is valid with $T^*$ replaced by $T$, and the right-hand side is bounded from above by $K:=\big(E_{\sigma,\delta}(\bu_{0},\rho_{0,\delta})+C_*\big)\left(1+2\delta \omega T e^{2\delta\omega T}\right)+1$ whenever $N$ is sufficiently large, since $\lim_{N\to \infty}\|\mathbf{u}_{0,N}-\mathbf{u}_0\|_{L^2(\mathbb{T}^3)}=0$ and $\underline{\rho}< \rho_{0,\delta}< 1$ in $\mathbb{T}^3$. In particular, we have 
$$
\sup_{0\leq t\leq T} \int_{\bbt^3} \Big(
	  \frac{\delta}{2} \absm{\Delta \rho_N}^2 
	+ \delta W_\delta(\rho_N)
	\Big)\,\dx \leq K,\quad \text{for}\ \ N\gg 1.
$$
Keeping in mind the mass conservation
$$
\int_{\mathbb{T}^3} \rho_N(t)\,\mathrm{d}x = \int_{\mathbb{T}^3} \rho_{0,\delta}\,\mathrm{d}x,
$$
and the aforementioned construction of $W_\delta$, we can apply Lemma \ref{lower-and-up-bounds-for-rho-sigma} to conclude the uniform upper and lower bounds of $\rho_N$: 
\begin{align}
\rho_N(x,t) \in \Big(\frac{1}{2}\underline{\rho},\ 1+\frac{1}{2}\underline{\rho}\Big),\quad \text{for all}\ \ (x,t)\in \mathbb{T}^3\times [0,T].
\label{L-inf-rhoN}
\end{align}
From \eqref{eqs:energy-inequality-N} and \eqref{L-inf-rhoN}, we obtain the following regularity properties of $(\mathbf{u}_N,\rho_N)$ with uniform estimates with respect to $N$ in the corresponding spaces:
\begin{align*}
& \sqrt{\rho_N}\mathbf{u}_N, \  \mathbf{u}_N\in L^\infty(0,T;L^2(\mathbb{T}^3)),  
\qquad 
\sqrt{\rho_N}\mathbb{D} \mathbf{u}_N,\  \mathbb{D} \mathbf{u}_N,\  \delta^\frac12 \Delta \mathbf{u}_N\in L^2(0,T;L^2(\mathbb{T}^3)),
\\
& \zeta\nabla \rho_N,\,\delta^\frac12\Delta \rho_N\in L^\infty(0,T;L^2(\mathbb{T}^3)),
\qquad \delta \nabla \Delta \rho_N,\  \delta^\frac12\zeta \Delta \rho_N\in L^2(0,T;L^2(\mathbb{T}^3)),
\\
& \widetilde{F}_\sigma(\rho_N)+C_*,\ \delta W_\delta(\rho_N)\in L^\infty(0,T;L^1(\mathbb{T}^3)),
\qquad 
\widetilde{P}_\sigma(\rho_N),\ 
	\delta H_\delta(\rho_N) \in L^\infty(Q_T).
\end{align*}

Next, we derive estimates for time derivatives. From \eqref{eqs:app-mass-P-sigma-delta-N}, we find 
\begin{align}
\|\partial_t\rho_N\|_{H^1(\mathbb{T}^3)}
& \lesssim \|\rho_N\|_{H^2(\mathbb{T}^3)}\|\mathbf{u}_N\|_{H^2(\mathbb{T}^3)}+\delta\|\nabla \Delta \rho_N\|_{L^2(\mathbb{T}^3)},
\label{es-rt-N}
\end{align}
which yields that 
$$
\partial_t\rho_N\ \ \text{is uniformly bounded in} \ \ L^2(0,T;H^1(\mathbb{T}^3)). 
$$
Next, from \eqref{eqs:weak-formulation-ddt-N}, we obtain  
\begin{align}
& \left|\big\langle \partial_t(\rho_N\mathbf{u}_N),\bphi\big\rangle_{(H^2(\mathbb{T}^3))^*,H^2(\mathbb{T}^3)}\right| 
\notag 
\\
   &\quad \lesssim \|\rho_N\|_{L^\infty(\mathbb{T}^3)} \|\mathbf{u}_{N}\|_{L^\infty(\mathbb{T}^3)} \|\mathbf{u}_{N}\|_{L^2(\mathbb{T}^3)} \|\nabla \bphi\|_{L^2(\mathbb{T}^3)} 
+  \|\rho_N\|_{L^\infty(\mathbb{T}^3)} \|\mathbb{D} \mathbf{u}_{N}\|_{L^2(\mathbb{T}^3)}\|\nabla \bphi\|_{L^2(\mathbb{T}^3)}
\notag \\
    &\qquad + \|\widetilde{P}_\sigma(\rho_N)
	+ \delta H_\delta(\rho_N)\|_{L^2(\mathbb{T}^3)}\|\Div \bphi\|_{L^2(\mathbb{T}^3)}
    +  \delta \| \nabla^2\rho_N\|_{L^2(\mathbb{T}^3)} \| \nabla\Delta\rho_N\|_{L^2(\mathbb{T}^3)} \| \bphi\|_{L^\infty(\mathbb{T}^3)}
    \notag \\
	&\qquad  +  \delta \| \nabla\Delta\rho_N\|_{L^2(\mathbb{T}^3)} 
    \| \nabla\rho_N\|_{L^6(\mathbb{T}^3)}  \| \bphi\|_{W^{1,3}(\mathbb{T}^3)} 
     + \delta \|\rho_N\|_{L^\infty(\mathbb{T}^3)}  \| \nabla\Delta \rho_N\|_{L^2(\mathbb{T}^3)} \|\bphi\|_{H^2(\mathbb{T}^3)}
      \notag   \\  
     & \qquad 
     + \delta \|\nabla \rho_N \|_{L^2(\mathbb{T}^3)} \|\nabla \bu_N \|_{L^2(\mathbb{T}^3)} \| \bphi\|_{L^\infty(\mathbb{T}^3)} 
     + \delta \|\Delta \bu_N\|_{L^2(\mathbb{T}^3)} \|\Delta \bphi \|_{L^2(\mathbb{T}^3)} 
	   \notag \\
	& \qquad + \zeta^2 \|\rho_N\|_{L^\infty(\mathbb{T}^3)} \|\Delta \rho_N\|_{L^2(\mathbb{T}^3)} 
    \|\Div \bphi\|_{L^2(\mathbb{T}^3)}
    + \zeta^2 \|\Delta \rho_N\|_{L^2(\mathbb{T}^3)}\|\nabla \rho_N\|_{L^2(\mathbb{T}^3)}\|\bphi\|_{L^\infty(\mathbb{T}^3)}  
	\notag \\
    &\qquad + \alpha^{-2} \|\bu_N\|_{L^2(\mathbb{T}^3)} \|\Div \bphi\|_{L^2(\mathbb{T}^3)},\quad \forall\, \bphi\in H^2(\mathbb{T}^3),
    \label{es-rut-N}
\end{align}
which together with the estimates on $(\mathbf{u}_N,\rho_N)$ implies 
$$
\partial_t(\rho_N \mathbf{u}_N)\ \ \text{is uniformly bounded in} \ \ L^2(0,T;(H^2(\mathbb{T}^3))^*). 
$$

Based on the above uniform estimates, we can extract a convergent subsequence ${(\mathbf{u}_N,\rho_N)}$ (which we do not relabel for simplicity) such that, as $N \to \infty$, its limit $(\mathbf{u},\rho)$ is a weak solution to the problem \eqref{eqs:app-P-sigma} in $[0,T]$ with the required properties. The compactness argument is similar to that in, e.g., \cite{FNP2001,VY2016}, and thus is omitted here. Moreover, by the weak lower semicontinuity of convex functions and norms, the limit functions $(\bu,\rho)$ satisfy the energy estimate \eqref{eqs:energy-inequality-sigma}. Finally, the mass relation \eqref{conservation law-mass-sd} easily follows from \eqref{eqs:app-mass-P-sigma-delta}.

The proof of Proposition \ref{prop:weak-solution-sigma} is complete. 
\hfill $\square$

\begin{remark}\rm 
    In the above argument, we do not need a Mellet--Vasseur-type inequality for the renormalization of the momentum equation because of the force term $- {\alpha}^{-2} \nabla \Delta^{-1} \Div \bu $, which serves as a linear damping term, cf.~\cite{MSZ2025,VY2016InventMath}.
\end{remark}

\section{Passage to the limit as $\sigma \to 0$} 
\label{sec:limit-sigma}

In this section, we investigate the limit as $\sigma \to 0$, while keeping $0<\delta\ll 1$ fixed. 
Formally, the corresponding limit system of \eqref{eqs:app-P-sigma} reads as follows 
\begin{subequations}
	\label{eqs:app-delta}
	\begin{align}
			& \partial_t (\rho \bu) 
			+ \Div(\rho \bu \otimes \bu) 
			+ \nabla \widetilde{P}(\rho) 
			+ \delta \rho\nabla W'(\rho)
			- \Div (\rho \bbd \bu) 
            \notag \\
			& 
			\quad = 
			- \delta \rho \nabla \Delta^2\rho
			- \delta \nabla \rho \cdot \nabla \bu
			- \delta \Delta^2 \bu
			+ \zeta^2 \rho \nabla \Delta \rho
			- \frac{1}{\alpha^2} \nabla \Delta^{-1} \Div \bu, \quad \text{in}\ \ Q_T,
            \label{eqs:app-momentum-delta}
		\\
		\label{eqs:app-mass-delta}
		 &\pt \rho + \Div (\rho \bu) = \delta \Delta \rho,\quad \text{in}\ \ Q_T,
      \\
      & \mathbf{u}|_{t=0} =\mathbf{u}_{0},\quad  \rho|_{t=0}=\rho_{0,\delta},\quad \text{in}\ \ \mathbb{T}^3.
        \label{eqs:initial data-delta}
	\end{align}
\end{subequations}
\begin{remark} \rm 
In \eqref{eqs:app-momentum-delta}, since the density function $\rho$ will be shown to satisfy $\rho(x,t) \in \big(\underline{\rho}, 1\big)$ almost everywhere in $Q_T$ (see Proposition \ref{prop:weak-solution-delta} below), it follows that $W_\delta(\rho)=W(\rho)$. Next, due to the singular behavior of $\widetilde{F}_c$ near $\underline{\rho}$ and $1$, we require the initial density $\rho_0$ to satisfy $  \widetilde{F}_c(\rho_0)\in L^1(\mathbb{T}^3)$ (cf. also Remark \ref{rem:ini}), which is equivalent to $\int_{\mathbb{T}^3} \widetilde{F}_c(\rho_0)\,\mathrm{d}x<\infty$ in view of \eqref{low-bd-F1}. On the other hand, this condition implies that $\rho_0 \in(\underline{\rho},1)$ almost everywhere in $\mathbb{T}^3$, thereby ensuring the validity of Proposition \ref{prop:weak-solution-sigma}. For the approximate initial density $\rho_{0,\delta}$ in \eqref{eqs:initial data-delta}, the convexity of $\widetilde{F}_c$ together with Jensen's inequality yields
$$
\widetilde{F}_c(\rho_{0,\delta}(x))\leq 
\int_{\mathbb{T}^3} \widetilde{F}_c(\rho_0(y))\widetilde{\eta}_\delta^{\mathrm{per}}(x-y)\,\mathrm{d}y,\quad \forall\,x\in \mathbb{T}^3.
$$
Without loss of generality, we can assume $\widetilde{F}_c(r)\geq 0$ for $r\in \big(\underline{\rho}, 1\big)$. Then it follows from Fubini's theorem that 
\begin{align}
\int_{\mathbb{T}^3} \widetilde{F}_c(\rho_{0,\delta}(x))\,\mathrm{d}x 
& \leq \int_{\mathbb{T}^3}   \int_{\mathbb{T}^3} \widetilde{F}_c(\rho_0(y))\widetilde{\eta}_\delta^{\mathrm{per}}(x-y)\,\mathrm{d}y\mathrm{d}x
\notag \\
&=\int_{\mathbb{T}^3} \widetilde{F}_c(\rho_0(y))  \Big(\int_{\mathbb{T}^3} \widetilde{\eta}_\delta^{\mathrm{per}}(x-y)\,\mathrm{d}x \Big)\mathrm{d}y
= \int_{\mathbb{T}^3} \widetilde{F}_c(\rho_{0}(x))\,\mathrm{d}x.
\label{Fc-delta}
\end{align}
From \eqref{Fc-up-sigma} and \eqref{Fc-delta}, we further get
\begin{align}
\int_{\mathbb{T}^3} \widetilde{F}_{c,\sigma}(\rho_{0,\delta}(x))\,\mathrm{d}x
\leq \int_{\mathbb{T}^3} \widetilde{F}_c(\rho_{0}(x))\,\mathrm{d}x,
\quad \text{for all}\ \ 0<\sigma, \delta\ll 1.\label{Fc-delta-b}
\end{align}
Because the concave part of $\widetilde{F}$ consists only of a quadratic polynomial, for $\rho_0\in H^1(\mathbb{T}^3)$, we can alternatively assume that $\widetilde{F}(\rho_0)\in L^1(\mathbb{T}^3)$.
\end{remark} 

The following proposition establishes the existence of global weak solutions to problem \eqref{eqs:app-delta}.
\begin{proposition}
\label{prop:weak-solution-delta} 
Suppose that Assumption \ref{main assumption} is satisfied and $\widetilde{F}$, $\widetilde{P}$ are defined as in \eqref{eqs:resuced-F}, \eqref{eqs:new-entropy}, respectively.
Let $0<\delta\ll 1$ be arbitrarily small but fixed, and let $T\in (0,\infty)$. Then for any initial data satisfying $(\bu_{0},\rho_{0}) \in L^2(\bbt^3) \times H^1(\bbt^3)$ with $\widetilde{F}(\rho_0)\in L^1(\mathbb{T}^3)$, problem \eqref{eqs:app-delta} admits a global weak solution $(\mathbf{u},\rho)$ on $[0,T]$ such that 
\begin{align}
& \bu\in L^\infty(0,T;L^2(\bbt^3)) \cap L^2(0,T;H^2(\bbt^3)), \notag \\
& \rho\in BC([0,T];H^2(\bbt^3))\cap  L^2(0,T;H^3(\bbt^3))\cap H^1(0,T;H^1(\mathbb{T}^3)), \notag \\
& \rho\in L^\infty(Q_T) \ \ \text{with}\ \ \rho \in \big(\underline{\rho}, 1\big) \  \text{ a.e. in } Q_T. 
\label{rho-UpLo-delta}
\end{align} 
The solution $(\mathbf{u},\rho)$ satisfies 
\begin{align}
 \label{eqs:app-mass-sigma1}
		 &\pt \rho + \Div (\rho \bu) = \delta \Delta \rho, \quad \text{a.e.~in }\ Q_T, 
\end{align} 
with $\rho|_{t=0}=\rho_{0,\delta}$ almost everywhere in $\mathbb{T}^3$, and the following weak formulation%
\begin{align}
	\nonumber
	& -\int_{\mathbb{T}^3} \rho_{0,\delta}\mathbf{u}_0\cdot\bphi(\cdot,0)\,\dx
	- \int_0^T \int_{\bbt^3} \rho \bu \cdot \pt \bphi \,\dxdt \\
	\nonumber
	& \qquad - \int_0^T \int_{\bbt^3} 
	\rho \bu \otimes \bu : \nabla \bphi \,\dxdt
    + \int_0^T \int_{\bbt^3} \rho \bbd \bu : \bbd \bphi \,\dxdt \\
	\nonumber
	& \quad = 
	 \int_0^T \int_{\bbt^3} \Big(\widetilde{P}(\rho)
	+ \delta H(\rho)
	\Big) \Div \bphi \,\dxdt - \delta\int_0^T \int_{\bbt^3}  \rho \nabla \Delta^2 \rho  \cdot \bphi \,\dxdt 
    \\
	\nonumber
	& \qquad 
    - \delta\int_0^T \int_{\bbt^3}  (\nabla \rho \cdot \nabla \bu) \cdot \bphi \,\dxdt 
	-\delta  \int_0^T \int_{\bbt^3} \Delta \bu \cdot \Delta \bphi \,\dxdt 
	  \\
	\nonumber
	& \qquad - \zeta^2\int_0^T \int_{\bbt^3}  (\rho \Delta \rho ) \Div \bphi \,\dxdt
	-\zeta^2 \int_0^T \int_{\bbt^3}  \Delta \rho  (\nabla \rho \cdot \bphi) \,\dxdt
	\\
	& \qquad + \frac{1}{\alpha^2}\int_0^T \int_{\bbt^3}  (\Delta^{-1} \Div \bu) \Div \bphi \,\dxdt,
    \quad \forall\, \boldsymbol{\varphi}\in C_{c}^{\infty}([0,T); C^{\infty}(\mathbb{T}^{3})).
    \label{eqs:weak-formulation-delta}
\end{align}
Here, the nonlinear function $H$ is defined as in \eqref{H def} and the term $\int_0^T \int_{\bbt^3} \rho \nabla \Delta^2 \rho  \cdot \bphi \,\dxdt$ should be understood as in \eqref{higher-terms}. Moreover, the following energy estimate holds
\begin{align}
	\nonumber
	& \sup_{0 \leq t \leq T} E_\delta(\bu(t),\rho(t))
	  + \int_0^T \int_{\bbt^3} \rho \absm{\bbd \bu}^2 \,\dxdt
	+ \frac{1}{\alpha^2} \int_0^T \int_{\bbt^3} \absm{\nabla \Delta^{-1}\Div \bu}^2 \,\dxdt 
    \\
    \nonumber 
    &\qquad + \delta \int_0^T \int_{\bbt^3} \absm{\Delta \bu}^2 \,\dxdt
    + \delta^2 \int_0^T \int_{\bbt^3} \absm{\nabla \Delta \rho}^2 \,\dxdt
    + \delta \zeta^2 \int_0^T \int_{\bbt^3} \absm{\Delta \rho}^2 \,\dxdt
    \\
	&  
	\quad \leq  E_\delta(\bu_{0},\rho_{0,\delta}) 
    + \big(E_\delta(\bu_{0},\rho_{0,\delta})+ C_*\big) 
     2\delta \omega T e^{2\omega\delta T},
	\label{eqs:energy-inequality-delta}
\end{align}
where $E_\delta$ is the approximate energy defined by
\begin{align}
	E_\delta(\bu,\rho) \coloneqq
	\int_{\bbt^3} \Big(
	\onehalf \rho \absm{\bu}^2
	+ \frac{\zeta^2}{2} \absm{\nabla \rho}^2
	+ \widetilde{F}(\rho)
    + \frac{\delta}{2} \absm{ \Delta \rho}^2
	+ \delta W(\rho)
	\Big) \,\dx.\label{energy-delta}
\end{align}
In addition, we have the conservation of mass 
	\begin{align}
		\int_{\bbt^3} \rho(\cdot,t) \,\dx
		= \int_{\bbt^3} \rho_{0,\delta} \,\dx,
        \quad \forall\, t\in[0,T].
        \label{conservation law-000}
	\end{align}
\end{proposition}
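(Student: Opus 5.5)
For each $\sigma$ we take the weak solution $(\bu_\sigma,\rho_\sigma)$ of \eqref{eqs:app-P-sigma} given by Proposition \ref{prop:weak-solution-sigma} and let $\sigma\to0$ with $\delta$ fixed.

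\textbf{Uniform bounds.} By \eqref{Fc-delta-b} and the quadratic concave part, $E_{\sigma,\delta}(\bu_0,\rho_{0,\delta})$ is bounded independently of $\sigma$. Moreover $W_\delta$ does not depend on $\sigma$, so \eqref{eqs:energy-inequality-sigma} and \eqref{UL-rho-sigma} give $\sigma$-uniform bounds. These are
\begin{align*}
&\bu_\sigma \ \text{in } L^\infty(0,T;L^2)\cap L^2(0,T;H^2), \qquad \rho_\sigma \ \text{in } L^\infty(0,T;H^2)\cap L^2(0,T;H^3),\\
&\widetilde F_\sigma(\rho_\sigma)+C_* \ \text{in } L^\infty(0,T;L^1), \qquad \tfrac12\underline\rho<\rho_\sigma<1+\tfrac12\underline\rho .
\end{align*}
The estimates \eqref{es-rt-N} and \eqref{es-rut-N} carry over, except for the term $\widetilde P_\sigma(\rho_\sigma)$. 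They give bounds on $\pt\rho_\sigma$ in $L^2(0,T;H^1)$ and on $\pt(\rho_\sigma\bu_\sigma)$ in $L^1(0,T;(W^{2,\infty})^*)$, once $\widetilde P_\sigma(\rho_\sigma)$ is bounded in $L^1(Q_T)$. Aubin--Lions then yields $\rho_\sigma\to\rho$ strongly in $C([0,T];H^{2-\epsilon})\cap L^2(0,T;H^{3-\epsilon})$, hence uniformly. Combined with $\rho_\sigma\bu_\sigma\rightharpoonup\rho\bu$ and the lower bound on $\rho_\sigma$, this gives $\bu_\sigma\to\bu$ strongly in $L^2(Q_T)$.

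\textbf{Localization of $\rho$.} Since $\rho_\sigma\to\rho$ uniformly and $\widetilde F_\sigma(\rho_\sigma)$ is bounded in $L^1$, Fatou's lemma together with $\widetilde F_{c,\sigma}\to+\infty$ outside $(\underline\rho,1)$ shows that $\rho\in[\underline\rho,1]$ and $\widetilde F(\rho)\in L^1$. Because $\widetilde F=+\infty$ at the endpoints, we get $\rho\in(\underline\rho,1)$ a.e. Consequently $W_\delta(\rho)=W(\rho)$ and $H_\delta(\rho)=H(\rho)$.

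\textbf{Main obstacle: the pressure.} We need $\widetilde P_\sigma(\rho_\sigma)\rightharpoonup\widetilde P(\rho)$ in $L^1(Q_T)$. For integrability we test \eqref{eqs:weak-formulation-ddt-sigma} with $\bphi=\psi(t)\cB[\rho_\sigma-\mean{\rho_\sigma}]$, using the Bogovski\u{\i} operator. Since $\Div\bphi=\psi(\rho_\sigma-\mean{\rho_\sigma})$, the pressure term becomes $\int\psi\,\widetilde P_\sigma(\rho_\sigma)(\rho_\sigma-\mean{\rho_\sigma})$. All other terms are bounded thanks to the $H^2/H^3$ bounds on $\rho_\sigma$, with $\cB$ gaining one derivative and $\pt\rho_\sigma\in L^2(H^1)$. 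On the set where $\rho_\sigma$ is close to $1$ or to $\underline\rho$, the factor $\rho_\sigma-\mean{\rho_\sigma}$ has a fixed sign and is bounded away from $0$, uniformly because the mass is conserved and lies strictly inside. This sign matches that of $\widetilde P_\sigma$, which yields a uniform $L^1_{loc}$ bound on $\widetilde P_\sigma(\rho_\sigma)$. Near $t=0$ we would either handle $\psi$ up to $0$ using the initial data or work locally in time.

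For equi-integrability we refine this test to $\bphi=\psi\,\cB[T_k(\rho_\sigma)-\text{mean}]$, with $T_k$ a cutoff concentrating on the tails $\{\rho_\sigma>1-k^{-1}\}$ or $\{\rho_\sigma<\underline\rho+k^{-1}\}$. This should give $\int_{\{|\widetilde P_\sigma|>M\}}|\widetilde P_\sigma(\rho_\sigma)|\to0$ uniformly as $M\to\infty$. Because $\delta$ is fixed and $\rho_\sigma$ is uniformly H\"older continuous, one can alternatively use the pointwise convergence together with the $\widetilde F$ bound. Dunford--Pettis and a.e. convergence then identify the limit via Vitali.

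\textbf{Passage to the limit.} All remaining terms pass to the limit by the strong convergences above, including the cubic and higher-order terms written as in \eqref{higher-terms}. The energy inequality \eqref{eqs:energy-inequality-delta} follows by weak lower semicontinuity and Fatou's lemma applied to $\widetilde F_\sigma(\rho_\sigma)\to\widetilde F(\rho)$. Mass conservation and \eqref{eqs:app-mass-sigma1} pass to the limit directly.
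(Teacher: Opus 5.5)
Your overall architecture is the paper's: uniform bounds from \eqref{eqs:energy-inequality-sigma}, the Bogovski\u{\i} test $\psi\cB[\rho_\sigma-\mean{\rho_\sigma}]$ with the sign argument for the $L^1$ bound, Fatou for $\rho\in(\underline{\rho},1)$, then Dunford--Pettis. The gap is the step you yourself flag as the main obstacle, equi-integrability of $\{\widetilde P_\sigma(\rho_\sigma)\}$, which is only asserted. With a bounded cutoff $T_k$ you need the whole right-hand side to tend to $0$ as $k\to\infty$, uniformly in $\sigma$. Mere boundedness is not enough, and the terms containing $T_k'(\rho_\sigma)$ do not obviously vanish. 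For instance, take the contribution of $\nabla\Div\bphi$ in \eqref{higher-terms}, $\delta\int_0^T\psi\int_{\bbt^3}\rho_\sigma T_k'(\rho_\sigma)\nabla\Delta\rho_\sigma\cdot\nabla\rho_\sigma\,\dxdt$, where $T_k'\sim k$ on the layer $\{1-2/k<\rho_\sigma<1-1/k\}$. Using $\nabla\Delta\rho_\sigma\in L^2(Q_T)$, $\nabla\rho_\sigma\in L^4(0,T;L^\infty(\bbt^3))$ and the measure of this layer (from the $\widetilde F_\sigma$- and $\widetilde P_\sigma$-bounds), the straightforward estimate is only $O(k^{3/4-\beta/2})$, which does not vanish at $\beta=3/2$.

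Your fallback does not work either. $\widetilde P\sim(1-\rho)^{-\beta-1}$ is one order more singular than $\widetilde F\sim(1-\rho)^{-\beta}$, so an $L^\infty(0,T;L^1)$ bound on $\widetilde F_\sigma(\rho_\sigma)$ plus pointwise convergence says nothing about the tails of $\widetilde P_\sigma(\rho_\sigma)$. Moreover, $H^2\hookrightarrow C^{0,1/2}$ together with $\int_{\bbt^3}\widetilde F_\sigma(\rho_\sigma)\,\dx\le C$ yields a uniform separation from the endpoints only for $\beta>6$. Indeed, if $\rho_\sigma(x_0,t)=1-\eta$, then $\rho_\sigma\ge 1-2\eta$ is guaranteed only on a ball of volume $\sim\eta^6$, which contributes $\sim\eta^{6-\beta}$. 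The argument of Lemma \ref{lower-and-up-bounds-for-rho-sigma} works for $W_\delta$ only because $W_\delta$ can be chosen as steep as needed.

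The missing idea (Lemma \ref{lem:uniform-P-chi-sigma}) is to test with an \emph{unbounded} function of the density. Take $\bphi=\psi\cB[\chi_\sigma(\rho_\sigma)-\mean{\chi_\sigma(\rho_\sigma)}]$, with $\chi_\sigma$ the truncation \eqref{cut function-1} of $\ln\frac{r-\underline\rho}{1-r}$; then only boundedness of the right-hand side is required.

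The endpoint growth of $\widetilde F_\sigma$ and $\widetilde P_\sigma$, together with the energy bound and the already proven \eqref{eqs:uniform-P-sigma}, gives:
\begin{itemize}
\item $\chi_\sigma(\rho_\sigma)$ bounded in $L^\infty(0,T;L^p)$ for all $p<\infty$;
\item $\chi_\sigma'(\rho_\sigma)$ bounded in $L^\infty(0,T;L^\beta)\cap L^{\beta+1}(Q_T)$.
\end{itemize}
For $\beta\ge 3/2$ this controls every term. For example, the term above is handled through $\|\chi_\sigma'(\rho_\sigma)\nabla\rho_\sigma\|_{L^2(Q_T)}\le\|\nabla\rho_\sigma\|_{L^4(0,T;L^\infty)}\|\chi_\sigma'(\rho_\sigma)\|_{L^4(0,T;L^2)}$, where the last factor follows by interpolating $L^\infty(0,T;L^{3/2})$ and $L^{5/2}(Q_T)$. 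The sign decomposition of Lemma \ref{lem:uniform-P} then gives $\int_{Q_T}|\widetilde P_\sigma(\rho_\sigma)\chi_\sigma(\rho_\sigma)|\,\dxdt\le C$. Since $|\chi_\sigma(\rho_\sigma)|\gtrsim \ln M$ on $\{|\widetilde P_\sigma(\rho_\sigma)|\ge M\}$ for large $M$, equi-integrability follows. A cutoff switching on a logarithmic scale, such as $\min\{1,(\chi(r)-k)_+/k\}$, would also work, but only through these same bounds on $\chi'(\rho_\sigma)$.

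Two smaller points. First, strong convergence of $\bu_\sigma$ needs compactness of $\rho_\sigma\bu_\sigma$ (Aubin--Lions--Simon with the bound on $\pt(\rho_\sigma\bu_\sigma)$), not just its weak convergence. Second, a time-local pressure bound would not suffice for test functions in $C_c^\infty([0,T);C^\infty(\bbt^3))$. Cutoffs $\psi_m\uparrow 1$ with $\|\psi_m'\|_{L^1(0,T)}\le 2$ give the bound on all of $Q_T$ directly.
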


To prove Proposition \ref{prop:weak-solution-delta}, we construct approximate solutions using Proposition \ref{prop:weak-solution-sigma}. For simplicity, we denote these approximate solutions by $(\bu_\sigma,\rho_\sigma)$, omitting the fixed parameter $\delta$. After deriving the necessary estimates for $(\bu_\sigma,\rho_\sigma)$, which are uniform in $\sigma$, we pass to the limit as $\sigma \to 0$.

\subsection{Energy estimates and mass conservation}
From \eqref{appro-energy}, \eqref{appro-energy-0}, \eqref{H def-d}, \eqref{UL-rho-sigma}, \eqref{eqs:energy-inequality-sigma} and \eqref{Fc-delta-b}, 
we obtain the following regularity properties of $(\mathbf{u}_\sigma, \rho_\sigma)$, which hold with uniform bounds in the corresponding spaces for $0<\sigma\ll 1$:
\begin{align}
& \sqrt{\rho_\sigma}\mathbf{u}_\sigma, \  \mathbf{u}_\sigma\in L^\infty(0,T;L^2(\mathbb{T}^3)),  
\qquad 
\sqrt{\rho_\sigma}\mathbb{D} \mathbf{u}_\sigma,\  \mathbb{D} \mathbf{u}_\sigma,\  \delta^\frac12 \Delta \mathbf{u}_\sigma\in L^2(0,T;L^2(\mathbb{T}^3)),
\label{eqs:uniform-u-sigma}
\\
& \zeta\nabla \rho_\sigma,\,\delta^\frac12\Delta \rho_\sigma\in L^\infty(0,T;L^2(\mathbb{T}^3)),
\qquad \delta \nabla \Delta \rho_\sigma,\  \delta^\frac12\zeta \Delta \rho_\sigma\in L^2(0,T;L^2(\mathbb{T}^3)),
\label{eqs:uniform-rho-sigma}
\\
& \widetilde{F}_\sigma(\rho_\sigma),\ \delta W_\delta(\rho_\sigma)\in L^\infty(0,T;L^1(\mathbb{T}^3)),\quad \delta H_\delta(\rho_\sigma)\in L^\infty(0,T;L^\infty(\Omega)).
\label{eqs:uniform-FWH-sigma}
\end{align}
In addition, we have the conservation of mass (see \eqref{conservation law-mass-sd})
	\begin{align}
		\label{eqs:conservation-mass-delta}
		\mean{\rho_\sigma(\cdot,t)} = \mean{\rho_{0,\delta}} \in \big(\underline{\rho}, 1\big),\quad \forall\, t\in[0,T].
	\end{align}
Using the above estimates, similarly to \eqref{es-rt-N}, we can show that  
\begin{align}
\partial_t\rho_\sigma\ \ \text{is uniformly bounded in} \ \ L^2(0,T;H^1(\mathbb{T}^3)).
\label{eqs:uniform-rhot-sigma}
\end{align}
The estimate for $\partial_t(\rho_\sigma\mathbf{u}_\sigma)$ is not available at this stage, since the pressure  $\widetilde{P}_\sigma(\rho_\sigma)$ must be treated in a different manner (cf. \eqref{es-rut-N}).    
  
\subsection{Integrability of the pressure}
We prove the uniform integrability of $\widetilde{P}_\sigma(\rho_\sigma)$ using an argument inspired by \cite{FP2000, FNP2001}. To this end, let us introduce the so-called Bogovski\u{\i} operator 
$$ \cB =\nabla \Delta^{-1}.$$ 
For any $g \in L^p(\bbt^3)$ with $\mean{g} = 0$, $1 < p <\infty$, the vector $\mathbf{v}=\cB[g]\in W^{1,p}(\bbt^3)$ is a solution of the equation $\Div \mathbf{v} = g$ in $\bbt^3$. More precisely, in $\mathbb{T}^3$, for any given function 
	\begin{align*}
		g(x) = \sum_{k \in \bb{Z}^3 \setminus \{0\}} \hat{g}(k) e^{2 \pi \mathrm{i} k \cdot x},
	\end{align*}
we have
	\begin{align} \label{eqs:Bogovskii}
		\cB[g](x) =-\sum_{k \in \bb{Z}^3 \setminus \{0\}} \frac{\mathrm{i}k}{2 \pi \abs{k}^2} \hat{g}(k) e^{2 \pi \mathrm{i} k \cdot x}.
	\end{align}
	According to \cite{GHH2006}, for $ 1 < p < \infty$, the following estimates hold 
	\begin{align}
	\label{eqs:Bogovskii-Lp}
	   & \norm{ \cB[g]}_{W^{k+1,p}(\bbt^3)}
		 \leq C_p \norm{g}_{W^{k,p}(\bbt^3)}, \quad
		 k=-1,0,1,...
	   \\ 
      \label{eqs:Bogovskii-Lp-div} 
	   & \norm{ \cB[\Div \mathbf{v}]}_{L^p(\bbt^3)}
		 \leq C_p \norm{\mathbf{v}}_{L^p(\bbt^3)}.
	\end{align}

\begin{lemma}
	\label{lem:uniform-P}
    Suppose that the assumptions of Proposition \ref{prop:weak-solution-delta} are satisfied. Let $(\bu_{\sigma},\rho_{\sigma})$ be an approximate solution given by Proposition \ref{prop:weak-solution-sigma}. 
    Then we have 
	\begin{align}
		\label{eqs:uniform-P-sigma}
		\int_0^{T} \int_{\bbt^{3}} \big|\widetilde{P}_\sigma(\rho_{\sigma})\big|\,\dxdt \leq C,
	\end{align}
	where $C > 0$ depends on $\delta$ but is independent of $\sigma$.
\end{lemma}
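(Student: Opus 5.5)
We test the weak formulation \eqref{eqs:weak-formulation-ddt-sigma} with the Bogovski\u{\i}-type field
\[
\bphi(x,t)=\psi(t)\,\cB\Big[\rho_\sigma-\mean{\rho_\sigma}\Big],
\]
where $\psi\in C_c^\infty((0,T))$ and $0\le\psi\le1$. After localization in time, $\psi$ is let tend to the indicator of $(0,T)$. An equivalent and cleaner route is to use the $H^1(0,T;H^1)$ regularity of $\rho_\sigma$ and take $\psi\equiv1$ together with the boundary terms at $t=0,T$. Since $\Div\bphi=\psi(\rho_\sigma-\overline{\rho}_\sigma)$ and $\overline\rho_\sigma:=\mean{\rho_{0,\delta}}$ is constant by \eqref{eqs:conservation-mass-delta}, the pressure term produces
\[
\int_0^T\!\!\int_{\bbt^3}\psi\,\widetilde{P}_\sigma(\rho_\sigma)(\rho_\sigma-\overline\rho_\sigma)\,\dxdt .
\]
All remaining terms will be bounded by constants depending on $\delta$ only, using the uniform bounds \eqref{eqs:uniform-u-sigma}--\eqref{eqs:uniform-rhot-sigma}, \eqref{UL-rho-sigma}, and \eqref{eqs:Bogovskii-Lp}--\eqref{eqs:Bogovskii-Lp-div}.

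\textbf{Step 1: bounding the remaining terms.} Since $\rho_\sigma\in L^\infty(0,T;H^2)$ uniformly by \eqref{eqs:uniform-rho-sigma}, we get $\cB[\rho_\sigma-\overline\rho_\sigma]\in L^\infty(0,T;H^3)\subset L^\infty(0,T;W^{2,\infty})$ uniformly. The convective, viscous, $\delta\Delta\bu\cdot\Delta\bphi$, $\delta\nabla\rho\cdot\nabla\bu$, capillary $\zeta^2$ terms, the damping term, and $\delta H_\delta(\rho_\sigma)\Div\bphi$ are then bounded as in \eqref{es-rut-N}. The higher-order term is taken in the form \eqref{higher-terms}; there $\nabla\Div\bphi=\psi\nabla\rho_\sigma$, and $\delta\nabla\Delta\rho_\sigma\in L^2(Q_T)$, which suffices.

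For the time-derivative term $\int\rho_\sigma\bu_\sigma\cdot\pt\bphi$, we write
\[
\pt\bphi=\psi'\,\cB[\rho_\sigma-\overline\rho_\sigma]+\psi\,\cB[\pt\rho_\sigma].
\]
The second piece is handled through $\cB[\pt\rho_\sigma]=-\cB[\Div(\rho_\sigma\bu_\sigma)]+\delta\,\cB[\Delta\rho_\sigma]$, combined with \eqref{eqs:Bogovskii-Lp-div} and \eqref{eqs:uniform-rhot-sigma}. The $\psi'$-contribution is controlled uniformly in $\psi$ by $\sup_t\|\rho_\sigma\bu_\sigma\|_{L^2}\|\cB[\cdots]\|_{L^2}\int|\psi'|$, which stays bounded for monotone cutoffs. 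The initial-data term vanishes because $\psi(0)=0$.

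\textbf{Step 2: from the weighted quantity to an $L^1$ bound.} This is the main obstacle. We have only bounded $\iint\widetilde P_\sigma(\rho_\sigma)(\rho_\sigma-\overline\rho_\sigma)$, and the weight vanishes at $\rho_\sigma=\overline\rho_\sigma$; moreover $\rho_\sigma$ may leave $(\underline\rho,1)$ slightly, by \eqref{UL-rho-sigma}. The plan is to exploit the sign structure. Since $\widetilde P_{c,\sigma}$ is increasing, it is positive on $(\rho_*,\infty)$ and negative below $\rho_*$. Because $\overline\rho_\sigma\in(\underline\rho,1)$ is fixed independently of $\sigma$, there is $\epsilon>0$ (depending only on $\rho_0$) with $\overline\rho_\sigma\in[\underline\rho+2\epsilon,1-2\epsilon]$.

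On the set $\{\rho_\sigma\ge 1-\epsilon\}$ we have $\widetilde P_{c,\sigma}\ge0$, and, provided $1-\epsilon>\rho_*$ (otherwise shrink $\epsilon$), also $\rho_\sigma-\overline\rho_\sigma\ge\epsilon$. On $\{\rho_\sigma\le\underline\rho+\epsilon\}$ both factors are negative. Hence on these tail sets the integrand is nonnegative and bounded below by $\epsilon|\widetilde P_{c,\sigma}(\rho_\sigma)|$. On the middle set $\{\underline\rho+\epsilon<\rho_\sigma<1-\epsilon\}$, $\widetilde P_{c,\sigma}$ is bounded uniformly in $\sigma$, since it coincides with $\widetilde P_c$ there for small $\sigma$.

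The quadratic part of $\widetilde P_\sigma$ is bounded on $[\underline\rho/2,1+\underline\rho/2]$. Splitting the integral accordingly gives
\[
\epsilon\iint_{\text{tails}}|\widetilde P_{c,\sigma}(\rho_\sigma)|\le C+C\iint_{\text{middle}}1 ,
\]
and therefore \eqref{eqs:uniform-P-sigma} follows.
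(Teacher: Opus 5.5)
Your proposal is correct and follows essentially the same route as the paper. You test \eqref{eqs:weak-formulation-ddt-sigma} with $\psi(t)\cB[\rho_\sigma-\mean{\rho_\sigma}]$, bound all remaining terms by $C(\|\psi\|_{L^\infty(0,T)}+\|\psi'\|_{L^1(0,T)})$, remove the cutoff, and then use the sign of $\widetilde P_{c,\sigma}$ on the two tail sets (the paper's $J_2,J_3$) together with its uniform boundedness on the middle set (the paper's $J_1$).

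Two small points, neither affecting the argument. First, $H^3(\mathbb{T}^3)$ embeds into $W^{1,\infty}(\mathbb{T}^3)$, not $W^{2,\infty}(\mathbb{T}^3)$. However, $W^{1,\infty}$ control of $\bphi$ together with $\Delta\bphi\in L^\infty(0,T;L^2(\mathbb{T}^3))$ is all the terms actually require. Second, besides $1-\epsilon>\rho_*$ you must also shrink $\epsilon$ so that $\underline\rho+\epsilon<\rho_*$; this is what gives the correct sign on the lower tail.
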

\begin{proof}
As in \cite{FNP2001}, we consider the test function 
$\psi(t) \cB \big[\rho_\sigma^{(0)}\big]$ with 
$$
\psi \in C_c^\infty((0,T)) \quad \text{and}  
\quad \rho_\sigma^{(0)}=\zeromean{\rho_\sigma}.
$$
Then, taking $\bphi=\psi(t) \cB \big[\rho_\sigma^{(0)}\big]$ in \eqref{eqs:weak-formulation-ddt-sigma} and \eqref{higher-terms}, we obtain
	\begin{align*}
		\int_0^T \psi(t) \int_{\mathbb{T}^{3}} \widetilde{P}_\sigma(\rho_{\sigma})  \rho_\sigma^{(0)} \,\dxdt = \sum_{i = 1}^{3} I_i,
	\end{align*}
	where
	\begin{align*}
		I_1 &=- \int_0^{T} \psi'(t) \int_{\mathbb{T}^{3}} \rho_{\sigma} \bu_{\sigma} \cdot \cB \big[\rho_\sigma^{(0)}\big]\,\dxdt, \\        
		I_2 & = - \int_0^{T} \psi(t) \int_{\mathbb{T}^{3}} \rho_{\sigma} \bu_{\sigma} \cdot \cB \big[\pt\rho_\sigma^{(0)}\big]\,\dxdt, 
		\\
		I_3 & = -\int_0^{T} \psi(t) \int_{\mathbb{T}^{3}} \rho_\sigma (\bu_{\sigma} \otimes \bu_{\sigma}): \nabla \cB \big[\rho_\sigma^{(0)}\big]\,\dxdt 
        +\int_0^{T} \psi(t) \int_{\mathbb{T}^{3}} \rho_{\sigma} \bbd \bu_{\sigma} : \nabla \cB \big[\rho_\sigma^{(0)}\big]\,\dxdt 
        \\
		&\quad -\delta\int_0^T \psi(t) \int_{\mathbb{T}^{3}} H_\delta(\rho_{\sigma})  \rho_\sigma^{(0)} \,\dxdt
        + \delta\int_0^{T} \psi(t) \int_{\mathbb{T}^{3}}   \nabla^2\rho_{\sigma}:\nabla\Delta\rho_{\sigma} \otimes\cB \big[\rho_\sigma^{(0)}\big]\,\dxdt
         \\
		&\quad +\delta \int_0^{T} \psi(t) \int_{\mathbb{T}^{3}}   \nabla\Delta\rho_{\sigma} \otimes\nabla\rho_{\sigma} :\nabla \cB \big[\rho_\sigma^{(0)}\big]\,\dxdt
        + \delta\int_0^{T} \psi(t) \int_{\mathbb{T}^{3}}   (\nabla\rho_{\sigma} \cdot\nabla\Delta\rho_{\sigma})  \rho_\sigma^{(0)}\,\dxdt
		\\
        &\quad + \delta \int_0^{T} \psi(t) \int_{\mathbb{T}^{3}}   \rho_{\sigma} \nabla\Delta\rho_{\sigma} \cdot \nabla\rho_\sigma^{(0)}\,\dxdt 
        + \delta\int_0^{T} \psi(t) \int_{\mathbb{T}^{3}}   (\nabla\rho_{\sigma} \cdot\nabla \bu_\sigma) \cdot \cB \big[\rho_\sigma^{(0)}\big]\,\dxdt
        \\
        &\quad + \delta\int_0^{T} \psi(t) \int_{\mathbb{T}^{3}}   \Delta\bu_{\sigma} \cdot\Delta \cB \big[\rho_\sigma^{(0)}\big]\,\dxdt
        + \zeta^2\int_0^{T} \psi(t) \int_{\mathbb{T}^{3}}  \rho_{\sigma} \Delta \rho_{\sigma}  \rho_\sigma^{(0)} \,\dxdt         
		 \\
		 & \quad         
         + \zeta^2\int_0^{T} \psi(t) \int_{\mathbb{T}^{3}} \Delta \rho_{\sigma} \nabla \rho_{\sigma} \cdot \cB \big[\rho_\sigma^{(0)}\big]\,\dxdt 
         - \frac{1}{\alpha^2} \int_0^{T} \psi(t) \int_{\mathbb{T}^{3}} \Delta^{-1} \Div \bu_\sigma  \rho_\sigma^{(0)} \,\dxdt.
	\end{align*}
Using H\"{o}lder's inequality,  \eqref{eqs:uniform-u-sigma}, \eqref{eqs:uniform-rho-sigma},
\eqref{eqs:conservation-mass-delta},  \eqref{eqs:Bogovskii-Lp} and the Poincar\'{e}--Wirtinger inequality, we get 
	\begin{align}
		|I_{1}|
		&\leq \|\psi'\|_{L^{1}(0,T)}
		\| \bu_{\sigma}\|_{L^{\infty}(0,T;L^{2}(\mathbb{T}^{3}))}\|\rho_{\sigma}\|_{L^{\infty}(0,T;L^{6}(\mathbb{T}^{3}))}
		\norm{\cB\big[ \rho^{(0)}_{\sigma}\big]}_{L^{\infty}(0,T;L^{3} (\mathbb{T}^{3}))}
		\nonumber
		\\
		&\leq C\|\psi'\|_{L^{1}(0,T)}\norm{\cB\big[\rho^{(0)}_{\sigma}\big]}_{L^{\infty}(0,T;H^{1}(\mathbb{T}^{3}))}
        \nonumber
        \\
        &\leq C\|\psi'(t)\|_{L^{1}(0,T)}.\label{pressure-I1}
	\end{align}
	Exploiting the continuity equation \eqref{eqs:app-mass-P-sigma-delta} and \eqref{eqs:Bogovskii-Lp-div}, we have
	\begin{align}
		|I_2| & 
		\leq \abs{\int_0^{T} \psi \int_{\mathbb{T}^{3}} \rho_{\sigma} \bu_{\sigma} \cdot \cB \big[\Div(\rho_{\sigma} \bu_{\sigma})-\delta\Delta\rho_\sigma\big]\,\dxdt} \nonumber \\
		& \leq C\|\psi(t)\|_{L^{\infty}(0,T)} \norm{\rho_\sigma \bu_\sigma}_{L^2(0,T;L^2(\bbt^3))} \big(\norm{\rho_\sigma \bu_\sigma}_{L^2(0,T;L^2(\bbt^3))} 
        +\delta \norm{\nabla \rho_\sigma}_{L^2(0,T;L^2(\bbt^3))} \big)
		\nonumber
		\\
		&\leq C \|\psi(t)\|_{L^{\infty}(0,T)}.
        \label{pressure-I2}
	\end{align}
	Similarly, using H\"{o}lder's inequality, the estimates \eqref{eqs:uniform-u-sigma}--\eqref{eqs:uniform-FWH-sigma}, \eqref{eqs:Bogovskii-Lp}, \eqref{eqs:Bogovskii-Lp-div}, after some straightforward computations, we obtain 
	\begin{align}\label{pressure-I3}
		|I_3|&\leq C\|\psi(t)\|_{L^{\infty}(0,T)}.
	\end{align}
	The positive constant $C$ in \eqref{pressure-I1}, \eqref{pressure-I2} and \eqref{pressure-I3} depends on $\delta$ but is independent of $\sigma$ and $\psi$. Combining \eqref{pressure-I1}--\eqref{pressure-I3} yields   
	\begin{align}
		\label{eqs:P-rho-sigma}
		\abs{\int_0^T \psi(t)\int_{\mathbb{T}^{3}} \widetilde{P}_\sigma(\rho_{\sigma}) \rho_\sigma^{(0)}\,\dxdt} 
        \leq C\big(\|\psi\|_{L^{\infty}(0,T)}+\|\psi'\|_{L^{1}(0,T)}\big).
	\end{align} 
In \eqref{eqs:P-rho-sigma}, choosing  $\psi=\psi_{m}$ ($m\in \mathbb{Z}^+$) such that 
\begin{equation}
	\label{cut-off function}
    \notag 
	\begin{aligned}
		& \psi_{m}\in C_{c}^{\infty}((0,T)),\quad  \psi_{m}\in [0,1],
		\quad \psi_{m}(t)=1 \ \ \text{for}\ \ t\in\left[\frac{1}{m},T-\frac{1}{m}\right], \quad \text{and} \ \ |\psi'|\leq 2m,
	\end{aligned}
\end{equation}
and then sending $ m \to \infty $,  we get  
\begin{align}
		\label{eqs:P-rho-sigma-0}
		\abs{\int_0^T \int_{\mathbb{T}^{3}} \widetilde{P}_\sigma(\rho_{\sigma}) \rho_\sigma^{(0)}\,\dxdt} \leq C.
	\end{align}
Define $\tm= \mean{\rho_{0,\delta}}$.  
Recalling that $\rho_*$, the only zero point of $\widetilde{P}_{c,\sigma}$ in $(\underline{\rho},1)$, is independent of $\sigma$ for $0<\sigma\ll 1$, we can find some $0<a\ll 1$ independent of $\sigma$ such that (cf. also \eqref{eqs:conservation-mass-delta})
    $$
    (1-a)\underline{\rho} + a\tm< \rho_* < a \tm+(1-a).
    $$  
According to different ranges of $\rho_\sigma$, we make the following decomposition
	\begin{align*}
		\int_0^T \int_{\mathbb{T}^{3}} \widetilde{P}_{c,\sigma}(\rho_{\sigma}) \rho_\sigma^{(0)}\,\dxdt
		= J_1 + J_2 + J_3,
	\end{align*}
	with
	\begin{align*}
		J_1 & \coloneqq \int_{\{(1-a)\underline{\rho} + a\tm < \rho_{\sigma}(x,t) < a \tm+(1-a)\}} \widetilde{P}_{c,\sigma}(\rho_{\sigma}) \rho_\sigma^{(0)}\,\dxdt, 
        \\
		J_2 & \coloneqq \int_{\{\rho_{\sigma}(x,t) \geq a \tm+(1-a)\}} \widetilde{P}_{c,\sigma}(\rho_{\sigma})\rho_\sigma^{(0)}\,\dxdt, 
        \\
		J_3 & \coloneqq \int_{\{\rho_{\sigma}(x,t) \leq (1-a)\underline{\rho} + a\tm\}} \widetilde{P}_{c,\sigma}(\rho_{\sigma}) \rho_\sigma^{(0)}\,\dxdt,
	\end{align*}
 Thanks to Remark \ref{rem:properties-entropy}, we obtain 
	\begin{align*}
		J_2 \geq (1-a)(1-\tm) \int_{\{\rho_{\sigma}(x,t) \geq a \tm+(1-a)\}} \abs{\widetilde{P}_{c,\sigma}(\rho_\sigma)}\,\dxdt\geq 0,
	\end{align*}
     where we have used the fact that $\widetilde{P}_{c,\sigma} (\rho_\sigma)>0$ on the set where $\rho_{\sigma}(x,t) \geq a \tm+(1-a)$.  This implies 
\begin{align}
		\int_{\{\rho_{\sigma}(x,t) \geq a \tm+(1-a)\}} \abs{\widetilde{P}_{c,\sigma}(\rho_\sigma)}\,\dxdt
        \leq \frac{1}{(1-a)(1-\tm)}J_2.
        \label{j2}
	\end{align}
In a similar manner, we can conclude
	\begin{align}
		\int_{\{\rho_{\sigma}(x,t) \leq (1-a)\underline{\rho} + a\tm\}}  \abs{\widetilde{P}_{c,\sigma}(\rho_\sigma)}\,\dxdt \leq \frac{1}{(1-a)(\tm - \underline{\rho})} J_3.\label{j3} 
	\end{align}
Since in the set $ \big\{(x,t) \in Q_T: (1-a)\underline{\rho} + a\tm < \rho_{\sigma}(x,t) < a \tm+(1-a)\big\}$, $ \widetilde{P}_{c,\sigma}(\rho_{\sigma}) $ is bounded and its bound is independent of $\sigma$ for $0<\sigma\ll 1$, we have  
    \begin{align}
    |J_1|\leq 
		\int_{\{(1-a)\underline{\rho} + a\tm < \rho_{\sigma}(x,t) < a \tm+(1-a)\}} \abs{\widetilde{P}_{c,\sigma}(\rho_{\sigma})}\,\dxdt\leq C.
        \label{J1}
        \end{align}
This estimate, together with \eqref{eqs:P-rho-sigma-0} and the uniform boundedness of $\rho_\sigma$ (see \eqref{UL-rho-sigma}) yields 
    \begin{align*}
        0\leq J_2+J_3=\int_0^T \int_{\mathbb{T}^{3}} \widetilde{P}_{c,\sigma}(\rho_{\sigma}) \rho_\sigma^{(0)}\,\dxdt
		-J_1\leq C.
    \end{align*}
Combining \eqref{j2}, \eqref{j3}, \eqref{J1}, and using \eqref{UL-rho-sigma} again, we arrive at the conclusion \eqref{eqs:uniform-P-sigma}. 
\end{proof}

\subsection{Equi-integrability of the pressure}

Next, we justify the equi-integrability of $\{\widetilde{P}_\sigma(\rho_{\sigma})\}$ for $0< \sigma \ll 1$. The singular behavior of $\widetilde{F}_c$ near $\underline{\rho}$ and $1$ plays a crucial role in the subsequent proof.     

Invoked by \cite{FLM2016,FZ2010}, we introduce the following test function
\begin{align*}
	\bw_\sigma(x,t) 
	\coloneqq \psi(t) \cB\left[\zeromean{\chi_\sigma(\rho_\sigma)}\right], \quad \psi \in C_c^\infty((0,T)),\ \ 0<\sigma\ll 1,
\end{align*}
where $\chi_\sigma$ is a truncated function defined by
\begin{align}\label{cut function-1}
	\chi_\sigma(r) 
=
	\left\{
	\begin{aligned}
		& \ln(1 - \sigma - \underline{\rho}) - \ln \sigma, && \quad r > 1 - \sigma, \\
		& \ln(r - \underline{\rho}) - \ln(1 - r), && \quad \underline{\rho} + \sigma \leq r \leq 1 - \sigma, \\
		& \ln\sigma - \ln(1 - \sigma - \underline{\rho}), && \quad r < \underline{\rho} + \sigma.
	\end{aligned}
	\right.
\end{align}
The above truncation is due to the bound \eqref{UL-rho-sigma} for $\rho_\sigma$.  
From the definition of $\widetilde{F}_{\sigma}$, $\widetilde{P}_{\sigma}$, Assumption \ref{main assumption}, and \eqref{UL-rho-sigma}, we can check that $\chi_\sigma(\rho_\sigma)$ satisfies
\begin{align*}
	\abs{\chi_\sigma(\rho_\sigma(x,t))}^{p} & \leq C \abs{\widetilde{F}_{\sigma} (\rho_\sigma(x,t))} +C,\quad  \forall\, 1 \leq p < \infty, 
    \\
	\abs{\chi_\sigma' (\rho_\sigma(x,t))}^{\beta} 
    & \leq C\abs{\widetilde{F}_{\sigma} (\rho_\sigma(x,t))} +C,  
    \\
	\abs{\chi_\sigma' (\rho_\sigma(x,t))}^{\beta+1} & \leq C\abs{\widetilde{P}_{\sigma} (\rho_\sigma(x,t))} + C.
\end{align*}
These estimates together with \eqref{eqs:uniform-FWH-sigma} and \eqref{eqs:uniform-P-sigma} ensure that 
\begin{align}
	\label{sigular1}\chi_\sigma(\rho_\sigma) & \text{ is uniformly bounded in }  L^\infty(0,T;L^p(\mathbb{T}^{3})), \text{ for all } 1 \leq p < \infty, \\
	\label{sigular2}   \chi_\sigma'(\rho_\sigma) & \text{ is uniformly bounded in } L^\infty(0,T;L^\beta(\mathbb{T}^{3})) \cap  L^{\beta+1}(Q_T),
\end{align}
with respect to $\sigma$ whenever $\sigma>0$ is sufficiently small. 

The following lemma establishes a uniform bound of $\widetilde{P}_\sigma(\rho_{\sigma}) \chi_\sigma(\rho_{\sigma})$ in $L^1(Q_T)$, which implies the equi-integrability of $\{\widetilde{P}_\sigma(\rho_{\sigma})\}$ for $0<\sigma\ll 1$, that is, 
\begin{align*}
 \forall\,\varepsilon>0, \quad \exists\, M>0, \quad \text{s.t.}\quad  \sup_{0<\sigma\ll 1}
\int_{\{|\widetilde{P}_\sigma(\rho_{\sigma}) |\geq M\}} |\widetilde{P}_\sigma(\rho_{\sigma}) | \,\dxdt \leq \varepsilon. 
\end{align*}

\begin{lemma}
	\label{lem:uniform-P-chi-sigma}
	Suppose that the assumptions of Proposition \ref{prop:weak-solution-delta} are satisfied. Let $(\bu_{\sigma},\rho_{\sigma})$ be an approximate solution given by Proposition \ref{prop:weak-solution-sigma}. 
    Then we have 
	\begin{align}
		\label{eqs:uniform-P-sigma-chi}
		\int_0^{T} \int_{\bbt^{3}} \abs{\widetilde{P}_\sigma(\rho_{\sigma}) \chi_\sigma(\rho_{\sigma})}\,\dxdt \leq C,
	\end{align}
	where $C > 0$ depends on $\delta$ but is independent of $\sigma$ (for $0<\sigma\ll 1$).
\end{lemma}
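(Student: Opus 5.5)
We will mimic the proof of Lemma \ref{lem:uniform-P}, replacing $\rho_\sigma^{(0)}$ by $\chi_\sigma^{(0)}:=\zeromean{\chi_\sigma(\rho_\sigma)}$. We take $\bphi=\bw_\sigma=\psi(t)\cB[\chi_\sigma^{(0)}]$ in \eqref{eqs:weak-formulation-ddt-sigma}. This is admissible after a standard density argument, since $\rho_\sigma\in L^2(0,T;H^3)\cap H^1(0,T;H^1)$ and $\chi_\sigma$ is Lipschitz for fixed $\sigma$. The result is
\begin{align*}
\int_0^T\psi\int_{\bbt^3}\widetilde{P}_\sigma(\rho_\sigma)\chi_\sigma^{(0)}\,\dxdt=\widetilde I_1+\widetilde I_2+\widetilde I_3,
\end{align*}
where $\widetilde I_1,\widetilde I_2,\widetilde I_3$ are the analogues of $I_1,I_2,I_3$.

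\textbf{Estimating the right-hand side.} By \eqref{sigular1}, $\chi_\sigma^{(0)}$ is bounded in $L^\infty(0,T;L^p)$ for every $p<\infty$, uniformly in $\sigma$. By \eqref{eqs:Bogovskii-Lp}, $\cB[\chi_\sigma^{(0)}]$ is then bounded in $L^\infty(0,T;W^{1,p})\subset L^\infty(Q_T)$. This handles $\widetilde I_1$ and all lower-order terms of $\widetilde I_3$ exactly as before, since they only need $\bphi\in L^\infty$ or $\nabla\bphi\in L^p$. The relevant bounds are $\bu_\sigma\in L^\infty L^2\cap L^2H^2$, $\rho_\sigma\in L^\infty H^2$, $\delta^{1/2}\nabla\Delta\rho_\sigma\in L^2L^2$ and $\delta H_\delta(\rho_\sigma)\in L^\infty$.

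Three terms involve derivatives of $\chi_\sigma$.

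\emph{The term $\delta\int\psi\int\rho_\sigma\nabla\Delta\rho_\sigma\cdot\nabla\chi_\sigma^{(0)}$.} Here $\nabla\chi_\sigma(\rho_\sigma)=\chi'_\sigma(\rho_\sigma)\nabla\rho_\sigma$. We bound it by
\begin{align*}
\|\nabla\Delta\rho_\sigma\|_{L^2L^2}\,\|\chi_\sigma'(\rho_\sigma)\|_{L^{\beta+1}(Q_T)}\,\|\nabla\rho_\sigma\|_{L^{q}(Q_T)},\qquad \tfrac12+\tfrac1{\beta+1}+\tfrac1q=1,
\end{align*}
using \eqref{sigular2} and $\nabla\rho_\sigma\in L^\infty(0,T;H^1)\subset L^\infty L^6$. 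For $\beta\ge 3/2$ this gives $1/q=1/2-1/(\beta+1)\ge 1/10$, so $q\le10$. Because $\rho_\sigma\in L^\infty H^2\cap L^2H^3$, interpolation gives $\nabla\rho_\sigma\in L^{10}(Q_T)$, so every $q\le 10$ is admissible.

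\emph{The term $\delta\int\psi\int\Delta\bu_\sigma\cdot\Delta\cB[\chi_\sigma^{(0)}]$.} This requires $\nabla\chi_\sigma(\rho_\sigma)\in L^2(Q_T)$, i.e. $\chi'_\sigma\nabla\rho_\sigma\in L^2$. That follows from $\chi'_\sigma\in L^\infty L^\beta$ with $\beta\ge3/2$ together with $\nabla\rho_\sigma\in L^\infty L^6$.

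\emph{The term $\widetilde I_2$.} The time derivative is handled via the renormalized continuity equation
\begin{align*}
\pt\chi_\sigma(\rho_\sigma)+\Div(\chi_\sigma\bu_\sigma)+(\rho_\sigma\chi_\sigma'-\chi_\sigma)\Div\bu_\sigma=\delta\Delta\chi_\sigma-\delta\chi''_\sigma|\nabla\rho_\sigma|^2.
\end{align*}
Each piece is treated as follows.
\begin{itemize}
\item $\cB\Div$ is bounded on $L^p$ (see \eqref{eqs:Bogovskii-Lp-div}), which controls $\chi_\sigma\bu_\sigma$ and $\delta\nabla\chi_\sigma$.
\item $\rho_\sigma\chi'_\sigma\Div\bu_\sigma$ lies in $L^2(0,T;L^{6\beta/(\beta+6)})$ because $\Div\bu_\sigma\in L^2H^1$. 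We then apply \eqref{eqs:Bogovskii-Lp} with $k=-1$ and pair with $\rho_\sigma\bu_\sigma\in L^2 L^6$.
\end{itemize}

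\textbf{Main obstacle.} The delicate piece is $\delta\chi''_\sigma|\nabla\rho_\sigma|^2$, since $|\chi''_\sigma|\sim|\chi'_\sigma|^2$ near the endpoints. I would avoid it by not expanding: write $\cB[\pt\chi_\sigma^{(0)}]=\pt\cB[\chi_\sigma^{(0)}]$, so that $\widetilde I_2=-\int\psi\int\rho_\sigma\bu_\sigma\cdot\cB[\chi_\sigma'(\rho_\sigma)\pt\rho_\sigma]^{(0)}$. Then we use $\pt\rho_\sigma\in L^2H^1\subset L^2L^6$ from \eqref{eqs:uniform-rhot-sigma} and $\chi'_\sigma\in L^\infty L^\beta$. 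Since $\beta\ge3/2$, the product $\chi'_\sigma\pt\rho_\sigma$ lies in $L^2L^{r}$ with $1/r=1/6+1/\beta\le 5/6$, i.e. $r\ge 6/5$. Hence $\cB[\cdot]\in L^2 W^{1,6/5}\subset L^2L^2$, which pairs with $\rho_\sigma\bu_\sigma\in L^2L^2$. The restriction $\beta\ge 3/2$ is used exactly here and in the $L^2$ bound on $\nabla\chi_\sigma$.

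\textbf{Conclusion.} These bounds give
\begin{align*}
\Big|\int_0^T\psi\int\widetilde P_\sigma(\rho_\sigma)\chi_\sigma^{(0)}\Big|\le C\big(\|\psi\|_{L^\infty}+\|\psi'\|_{L^1}\big),
\end{align*}
and letting $\psi=\psi_m\to1$ as in Lemma \ref{lem:uniform-P} removes $\psi$. Next, the mean term $\int\widetilde P_\sigma(\rho_\sigma)\mean{\chi_\sigma(\rho_\sigma)}$ is bounded by \eqref{eqs:uniform-P-sigma} and \eqref{sigular1}. The difference $\widetilde P_\sigma-\widetilde P_{c,\sigma}$ is a bounded quadratic function of $\rho_\sigma$ and is harmless.

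It remains to exploit signs. Both $\widetilde P_{c,\sigma}$ and $\chi_\sigma$ are increasing, with zeros at $\rho_*$ and at $(1+\underline{\rho})/2$ respectively. Hence $\widetilde P_{c,\sigma}\chi_\sigma\ge0$ outside a fixed compact subinterval of $(\underline{\rho},1)$, on which both factors are uniformly bounded. Splitting $Q_T$ into these regions as in \eqref{j2}--\eqref{J1} turns the signed bound into the absolute bound \eqref{eqs:uniform-P-sigma-chi}.
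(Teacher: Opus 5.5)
Your proposal follows the same route as the paper. You test \eqref{eqs:weak-formulation-ddt-sigma} with $\psi(t)\cB[\chi_\sigma^{(0)}]$, bound every term uniformly in $\sigma$ using the fixed-$\delta$ regularity together with \eqref{sigular1}--\eqref{sigular2}, let $\psi\to 1$, and then turn the signed bound into an absolute one by splitting according to the signs of $\widetilde P_{c,\sigma}$ and $\chi_\sigma$. Two of your steps are legitimate simplifications of the paper's bookkeeping, valid because $\delta$ is fixed here: you treat $\widetilde I_2$ directly through $\pt\rho_\sigma\in L^2(0,T;H^1)$, and you discard the mean of $\chi_\sigma$ via Lemma \ref{lem:uniform-P} before the sign argument.

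One justification is wrong as written and needs to be replaced.
\begin{itemize}
\item The bounds $\chi_\sigma'\in L^\infty(0,T;L^\beta)$ and $\nabla\rho_\sigma\in L^\infty(0,T;L^6)$ only give $\chi_\sigma'\nabla\rho_\sigma\in L^\infty(0,T;L^{6\beta/(\beta+6)})$. At $\beta=3/2$ this is $L^{6/5}$, and it reaches $L^2$ only when $\beta\ge 3$.
\item The term $\delta\Delta\bu_\sigma\cdot\Delta\cB[\chi_\sigma^{(0)}]$ needs $\nabla\chi_\sigma(\rho_\sigma)\in L^2(Q_T)$. Get this from the H\"older pairing you already used one paragraph earlier: $\|\chi_\sigma'\nabla\rho_\sigma\|_{L^2(Q_T)}\le\|\chi_\sigma'\|_{L^{\beta+1}(Q_T)}\|\nabla\rho_\sigma\|_{L^{10}(Q_T)}$.
\item This pairing is admissible because $\frac{1}{\beta+1}+\frac{1}{10}\le\frac12$ whenever $\beta\ge\frac32$.
\end{itemize}
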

\begin{proof}
Choosing the test function $\bphi=\bw_\sigma$ in \eqref{eqs:weak-formulation-ddt-sigma} and \eqref{higher-terms}, denoting for simplicity  $$\chi_\sigma^{(0)}= \zeromean{\chi_\sigma(\rho_\sigma)},$$ 
then we get
\begin{align*}
  & \int_0^{T} \psi(t) \int_{\mathbb{T}^{3}} \widetilde{P}_\sigma(\rho_\sigma)\chi_\sigma^{(0)}\,\dxdt\\
  &
    = -\int_0^{T} \psi'(t) \int_{\mathbb{T}^{3}}\rho_\sigma \bu_\sigma \cdot \cB\big[\chi_\sigma^{(0)}\big]\,\dxdt
  - \int_0^{T} \psi(t) \int_{\mathbb{T}^{3}} \rho_\sigma \bu_\sigma \cdot \cB\big[\pt\chi_\sigma^{(0)}\big] \,\dxdt 
		\\
        &\quad - \int_0^{T} \psi(t) \int_{\mathbb{T}^{3}} \rho_\sigma (\bu_\sigma \otimes \bu_\sigma) : \nabla \cB\big[\chi_\sigma^{(0)}\big] \,\dxdt
		  +\int_0^{T} \psi(t) \int_{\mathbb{T}^{3}} \rho_\sigma \bbd \bu_\sigma : \nabla\cB\big[\chi_\sigma^{(0)}\big]\,\dxdt	 
		\\
        &\nonumber\quad 
        - \delta \int_0^{T} \psi(t) \int_{\mathbb{T}^{3}}   H_\delta(\rho_\sigma) \chi_\sigma^{(0)} \,\dxdt 
        + \delta\int_0^{T} \psi(t) \int_{\mathbb{T}^{3}}   \nabla^2\rho_{\sigma}:\nabla\Delta\rho_{\sigma} \otimes \cB\big[\chi_\sigma^{(0)}\big] \,\dxdt
        \\
        &\quad 
		   + \delta\int_0^{T} \psi(t) \int_{\mathbb{T}^{3}}   \nabla\Delta\rho_{\sigma} \otimes\nabla\rho_{\sigma} : \nabla\cB\big[\chi_\sigma^{(0)}\big]  \,\dxdt
        + \delta\int_0^{T} \psi(t) \int_{\mathbb{T}^{3}}   \nabla\rho_{\sigma} \cdot\nabla\Delta\rho_{\sigma} \chi_\sigma^{(0)}\,\dxdt
        \\
        &\quad +\delta \int_0^{T} \psi(t) \int_{\mathbb{T}^{3}} \rho_{\sigma} \nabla\Delta\rho_{\sigma} \cdot \nabla\chi_\sigma^{(0)}\,\dxdt
        + \delta\int_0^{T} \psi(t) \int_{\mathbb{T}^{3}} (\nabla \rho_\sigma \cdot\nabla\bu_\sigma) \cdot \cB\big[\chi_\sigma^{(0)}\big] \,\dxdt
		\\
        &\nonumber\quad	
        + \delta\int_0^{T} \psi(t) \int_{\mathbb{T}^{3}} \Delta \bu_\sigma  \cdot \Delta\cB\big[\chi_\sigma^{(0)}\big] \,\dxdt
        + \zeta^2\int_0^{T} \psi(t) \int_{\mathbb{T}^{3}}\Delta \rho_\sigma \nabla\rho_\sigma   \cdot\cB\big[\chi_\sigma^{(0)}\big]\, \dxdt
        \\
        &\nonumber\quad
		  + \zeta^2\int_0^{T} \psi(t) \int_{\mathbb{T}^{3}}\rho_\sigma \Delta \rho_\sigma \chi_\sigma^{(0)}\, \dxdt
        - \frac{1}{\alpha^2}\int_0^{T} \psi(t) \int_{\mathbb{T}^{3}} \Delta^{-1} \Div \bu_\sigma \chi_\sigma^{(0)}\,\dxdt.
	\end{align*}
Using \eqref{UL-rho-sigma}, \eqref{eqs:uniform-u-sigma}--\eqref{eqs:conservation-mass-delta},  \eqref{eqs:Bogovskii-Lp}, \eqref{eqs:Bogovskii-Lp-div} and the estimates \eqref{sigular1}, \eqref{sigular2} for $\chi_\sigma(\rho_\sigma)$, by an argument similar to that for Lemma \ref{lem:uniform-P}, we can deduce that  
\begin{align}
		\label{eqs:uniform-P-sigma-chi-b}
		\left|\int_0^{T} \int_{\bbt^{3}}  \widetilde{P}_\sigma(\rho_{\sigma}) \chi_\sigma^{(0)}\,\dxdt\right|
        \leq C,
\end{align}
where $C>0$ depends on $\delta$ but is independent of $\sigma$. Combining \eqref{UL-rho-sigma}, \eqref{sigular1} and \eqref{eqs:uniform-P-sigma-chi-b}, we further get 
    \begin{align}
		\label{eqs:uniform-P-sigma-chi-c}
		\left|\int_0^{T} \int_{\bbt^{3}}  \widetilde{P}_{c,\sigma}(\rho_{\sigma}) \chi_\sigma^{(0)}\,\dxdt\right|
        \leq C.
\end{align}
Applying \eqref{sigular1} with $p=1$, we find 
$\sup_{0\leq t\leq T}\big|\mean{\chi_\sigma(\rho_\sigma)}\big|$ is bounded by a positive constant $\widehat{m}$, which is independent of $\sigma$. By the definition of $P_{c,\sigma}$ and $\chi_\sigma$, there exists a sufficiently small $\widehat{\sigma}\in (0,1)$ such that for all $\sigma\in (0,\widehat{\sigma})$, 
\begin{align*}
& 
P_{c,\sigma}(r)>0,\quad \chi_\sigma(r) \geq 2\widehat{m},\quad \forall\,  r\geq 1-\widehat{\sigma},\\
&
P_{c,\sigma}(r)<0,\quad \chi_\sigma(r)\leq -2\widehat{m},\quad \forall\,  r\leq \underline{\rho}+\widehat{\sigma}. 
\end{align*}
Make the decomposition
	\begin{align*}
		\int_0^T \int_{\mathbb{T}^{3}} \widetilde{P}_{c,\sigma}(\rho_{\sigma}) \chi_\sigma^{(0)}\,\dxdt
		= \widehat{J}_1 + \widehat{J}_2 + \widehat{J}_3,
	\end{align*}
	with
	\begin{align*}
		\widehat{J}_1 & \coloneqq \int_{\{\underline{\rho}+\widehat{\sigma}< \rho_{\sigma}(x,t) < 1-\widehat{\sigma}\}} \widetilde{P}_{c,\sigma}(\rho_{\sigma}) \chi_\sigma^{(0)}\,\dxdt, 
        \\
		\widehat{J}_2 & \coloneqq \int_{\{\rho_{\sigma}(x,t) \geq 1-\widehat{\sigma}\}} \widetilde{P}_{c,\sigma}(\rho_{\sigma})\chi_\sigma^{(0)}\,\dxdt, 
        \\
		\widehat{J}_3 & \coloneqq \int_{\{\rho_{\sigma}(x,t) \leq \underline{\rho}+\widehat{\sigma}\}} \widetilde{P}_{c,\sigma}(\rho_{\sigma}) \chi_\sigma^{(0)}\,\dxdt.
	\end{align*}
Using the same argument as for Lemma \ref{lem:uniform-P}, we can conclude \eqref{eqs:uniform-P-sigma-chi} from \eqref{eqs:uniform-P-sigma-chi-c}. This completes the proof. 
\end{proof}
%

\subsection{The limit procedure as $\sigma\to 0$}
We are ready to pass to the limit as $\sigma\to 0$, while keeping $0<\delta\ll1 $ fixed. In what follows, convergence is always understood in the sense of a subsequence (not relabeled for simplicity).

Thanks to the uniform estimates \eqref{eqs:uniform-rho-sigma}, \eqref{eqs:uniform-u-sigma}, \eqref{eqs:uniform-rhot-sigma} and the weak (resp. weak star) compactness theorem, there is a subsequence $\{(\mathbf{u}_\sigma,\rho_\sigma)\}$ and a pair $(\mathbf{u},\rho)$ satisfying  
\begin{align*}
& \bu\in L^\infty(0,T;L^2(\bbt^3)) \cap L^2(0,T;H^2(\bbt^3)),  
\\
&\rho\in L^{\infty}(0,T;H^2(\bbt^3))\cap  L^2(0,T;H^3(\bbt^3))\cap H^1(0,T;H^1(\mathbb{T}^3)),
\end{align*} 
such that $(\mathbf{u}_\sigma,\rho_\sigma)$ weakly (resp. weakly star) converges to $(\mathbf{u},\rho)$
in the corresponding spaces as $\sigma\to 0$. Moreover, the Aubin--Lions--Simon lemma yields 
$\rho_\sigma\to \rho$ strongly in $C([0,T];H^2(\mathbb{T}^3))$, which also implies the almost everywhere convergence of $\rho_\sigma$ in $Q_T$. 
Then we have  
\begin{align}
\rho\in BC([0,T];H^2(\bbt^3))\quad \text{and}\quad 
		\frac{1}{2}\underline{\rho} 
        \leq \rho(x,t)\leq 
        1+\frac{1}{2}\underline{\rho} \quad  \text{for all}\ \ (x,t)\in \mathbb{T}^3\times [0,T]. \label{range-1}
	\end{align}

With the help of the singular behavior of $\widetilde{F}_c$ near $\underline{\rho}$ and $1$  (cf. \eqref{singular behavior}), we can  improve the upper and lower bounds in \eqref{range-1}. The proof is based on a classical method for the Cahn--Hilliard equation with a singular potential (cf. \cite{Mi2019}).  
\begin{lemma}
	\label{lem:density-lower-upper-bound}
	Let $\rho$ be the limit function obtained above. We have $\rho \in(\underline{\rho }, 1)$ almost everywhere in $Q_T$.
\end{lemma}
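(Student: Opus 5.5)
We will show that the sets where $\rho\ge 1$ or $\rho\le\underline{\rho}$ have zero measure, using the uniform $L^\infty(0,T;L^1)$ bound on $\widetilde{F}_\sigma(\rho_\sigma)$ from \eqref{eqs:uniform-FWH-sigma} (together with \eqref{Fc-delta-b}), the a.e. convergence $\rho_\sigma\to\rho$ in $Q_T$, and the fact that the quadratic extension $\widetilde{F}_{c,\sigma}$ becomes unbounded as $\sigma\to0$ at and beyond the endpoints. This follows the classical argument for Cahn--Hilliard with singular potentials (cf. \cite{Mi2019}).

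First, by \eqref{eqs:energy-inequality-sigma}, \eqref{Fc-delta-b}, and the lower bound \eqref{low-bd-F1}, we have $\sup_t\int_{\mathbb{T}^3}\widetilde{F}_{c,\sigma}(\rho_\sigma)\,\dx\le C$ uniformly in $\sigma$. The quadratic term $\frac{\omega}{2}\phi^2$ is harmless because $\rho_\sigma$ is bounded by \eqref{UL-rho-sigma}. Since $F_{c,\sigma}$ agrees with $F_c$ on $(-1+\sigma,1-\sigma)$ and is a second-order Taylor extension outside it, with $F_c''>0$, it satisfies $F_{c,\sigma}(r)\ge F_c(1-\sigma)+F_c'(1-\sigma)(r-1+\sigma)$ for $r\ge1-\sigma$. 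Because $F_c'(1-\sigma)\to+\infty$, we get $F_c(1-\sigma)\to+\infty$ as well (at least $F_c$ is bounded below near $1$). Hence, for $r\ge 1-\sigma$ we have $F_{c,\sigma}(r)\ge F_c(1-\sigma)-C$ once $\sigma$ is small. In terms of $\rho$, this gives $\widetilde F_{c,\sigma}(r)\ge \widetilde{F}_c(1-\sigma/\zeta)-C$ whenever $r\ge 1-\sigma/\zeta$, and analogously near $\underline\rho$.

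Next, fix $\eta>0$ small and let $\sigma<\eta$. For $\rho_\sigma\ge 1-\eta/\zeta$ there are two cases. If $\rho_\sigma\in[1-\eta/\zeta,1-\sigma/\zeta]$, then $\widetilde{F}_{c,\sigma}(\rho_\sigma)=\widetilde F_c(\rho_\sigma)$. If $\rho_\sigma>1-\sigma/\zeta$, then $\widetilde{F}_{c,\sigma}(\rho_\sigma)\ge \widetilde F_c(1-\sigma/\zeta)-C$. Set $M_\eta:=\inf_{r\in[1-\eta/\zeta,1)}\widetilde F_c(r)-C$. By the monotonicity of $F_c''$ near $\pm1$ from Assumption \ref{main assumption}, $F_c$ is increasing near $1$, so $M_\eta\to\infty$ as $\eta\to0$. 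In either case $\widetilde{F}_{c,\sigma}(\rho_\sigma)\ge M_\eta$, and Chebyshev's inequality gives
\begin{align*}
\big|\{(x,t)\in Q_T:\rho_\sigma\ge 1-\eta/\zeta\}\big|\le \frac{CT}{M_\eta}.
\end{align*}
Since $\rho_\sigma\to\rho$ a.e., Fatou's lemma applied to indicator functions yields $|\{\rho>1-\eta/\zeta\}|\le CT/M_\eta$. Letting $\eta\to0$, we obtain $|\{\rho\ge1\}|=0$. The lower endpoint $\underline\rho$ is treated in the same way, using $\lim_{r\to-1}F_c'(r)=-\infty$.

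The main point needing care is the uniform lower bound of $\widetilde F_{c,\sigma}$ on the region beyond the endpoints: $\rho_\sigma$ may lie outside $[\underline\rho,1]$ (only \eqref{UL-rho-sigma} is known), so we need $F_{c,\sigma}\ge F_c(1-\sigma)-C$ there uniformly. This follows from the convex Taylor extension together with $F_c'(1-\sigma)>0$. If one wants the bound without using the sign of $F_c'$, it suffices to note that the linear term is nonnegative for $r\ge1-\sigma$ once $\sigma<\sigma_0$, which is exactly where $F_c'>0$ under Assumption \ref{main assumption}.
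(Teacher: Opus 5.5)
Your overall route matches the paper's: a uniform $L^1(Q_T)$ bound on $\widetilde F_{c,\sigma}(\rho_\sigma)$, a Chebyshev estimate on the set where $\rho_\sigma$ is near or beyond an endpoint, Fatou's lemma via $\rho_\sigma\to\rho$ a.e., and then shrinking the threshold. You also handle points where $\rho_\sigma\notin(\underline\rho,1)$ (allowed by \eqref{UL-rho-sigma}) through the sign of the linear Taylor term. That makes explicit what the paper's inequality $\min\{|\widetilde F(\underline\rho+\epsilon)|,|\widetilde F(1-\epsilon)|\}\meas(E^\sigma_\epsilon)\le\int_{E^\sigma_\epsilon}|\widetilde F_{c,\sigma}(\rho_\sigma)|$ leaves implicit, and that part is fine.

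The gap is in the step that carries the whole lemma: $M_\eta\to+\infty$, i.e.\ $\widetilde F_c(r)\to+\infty$ as $r\to1^-$ and as $r\to\underline\rho^+$. You derive this from $F_c'\to\pm\infty$ and from the monotonicity of $F_c''$, but neither implies it.

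\begin{itemize}
\item The logarithmic potential \eqref{log} is strictly convex with $F_c'\to\pm\infty$, yet $F_c$ stays bounded on $[-1,1]$. For it, your $M_\eta$ stays bounded.
\item For such a potential the Chebyshev argument can only exclude the region strictly beyond the endpoints, where $F_c'(\pm(1-\sigma))$ times the distance blows up. That yields $\rho\in[\underline\rho,1]$, not the strict inclusion.
\end{itemize}

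The missing ingredient is the growth condition \eqref{beta assum}. It forces $|F_c(r)|\ge c(1-r^2)^{-\beta}\to\infty$ as $|r|\to1$. Since $F_c''>0$ and $F_c'\to\pm\infty$, $F_c$ is monotone near $\pm1$ and hence bounded below there, so $F_c\to+\infty$ at both ends. This is what the paper uses, via \eqref{singular behavior}, when it sends $\epsilon\to0$. With this justification in place of yours, the proof goes through.

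A minor bookkeeping point: with $\phi=-\zeta\rho+\zeta-1$, the condition $\phi\ge1-\sigma$ corresponds to $\rho\le\underline\rho+\sigma/\zeta$. So the endpoint $\rho=1$ comes from the left extension at $\phi=-1+\sigma$. There $F_c'(-1+\sigma)<0$ and $\phi+1-\sigma<0$, so the linear term is again nonnegative and nothing else changes.
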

\begin{proof} It follows from \eqref{eqs:uniform-FWH-sigma} and \eqref{UL-rho-sigma} that $\widetilde{F}_{c,\sigma}(\rho_\sigma)$ is uniformly bounded in $L^1(Q_T)$.  
	For any $0< \sigma \ll 1 $ and $\epsilon\in (\sigma, (1-\underline{\rho})/4)$, we introduce the sets
	\begin{align*}
		E^\sigma_\epsilon & \coloneqq \left\{ (x,t) \in Q_T : \rho_\sigma(x,t) < \underline{\rho} + \epsilon \text{ or } \rho_\sigma(x,t) > 1 - \epsilon \right\}, \\
		E_\epsilon & \coloneqq \left\{ (x,t) \in Q_T : \rho(x,t) < \underline{\rho} + \epsilon \text{ or } \rho(x,t) > 1 - \epsilon \right\}.
	\end{align*}
	Due to Assumption \ref{main assumption}, the definition of $\widetilde{F}_c$, $\widetilde{F}_{c,\sigma}$, and the fact $0<\sigma<\epsilon$, we have 
	\begin{align*}
		\min \left\{|\widetilde{F}(\underline{\rho} + \epsilon)|,\  |\widetilde{F}(1 - \epsilon)|\right\} \meas(E^\sigma_\epsilon) \leq \int_{E^\sigma_\epsilon} |\widetilde{F}_{c,\sigma}(\rho_\sigma)|\,\dxdt
		\leq \int_{Q_T} |\widetilde{F}_{c,\sigma}(\rho_\sigma)|\,\dxdt
		\leq C,
	\end{align*}
	where the constant $ C > 0 $ is independent of $ \sigma $ and $ \epsilon $. Since $ \rho_\sigma \to \rho$ almost everywhere in $Q_T$, it follows from  Fatou’s lemma that
	\[
	\meas(E_\epsilon) 
    \leq \liminf_{\sigma \to 0^+} \meas(E^\sigma_\epsilon)
    \leq \frac{C}{\min \left\{ |\widetilde{F}(1 - \epsilon)|, |\widetilde{F}(\underline{\rho} + \epsilon)| \right\}}.
	\]
   	Next, passing to the limit as \( \epsilon \to 0 \), 
	we find 
	\begin{align*}
		\meas\left( \left\{ (x,t) \in Q_T : \rho(x,t) \leq \underline{\rho} \text{ or } \rho(x,t) \geq 1 \right\} \right) = 0,
	\end{align*}
	which implies 
    \begin{align}
		\rho \in \big(\underline{\rho}, 1\big) \quad \text{ a.e. in } Q_T.
        \label{bound-1}
	\end{align}   
	This completes the proof.
\end{proof}

From the almost everywhere convergence of $\rho_\sigma$ to $\rho$, Lemma \ref{lem:density-lower-upper-bound}, and the definition of $\widetilde{F}_{\sigma}$, $\widetilde{P}_\sigma$, we infer that as $\sigma\to 0$,  
\begin{align}
& \widetilde{F}_\sigma(\rho_\sigma) \to \widetilde{F}(\rho),\quad \widetilde{F}'_\sigma(\rho_\sigma) \to \widetilde{F}'(\rho),\quad \text{a.e. in}\ \ Q_T,\notag
\end{align}
which yield 
\begin{align}
&\widetilde{P}_\sigma(\rho_\sigma) \to \widetilde{P}(\rho)= \rho \widetilde{F}'(\rho) - \widetilde{F}(\rho), \quad \text{a.e. in}\ \ Q_T. 
\label{pressure pointwise}
\end{align}
The point-wise convergence \eqref{pressure pointwise} together with the uniform estimates \eqref{eqs:uniform-P-sigma} and \eqref{eqs:uniform-P-sigma-chi} enables us to apply the Dunford--Pettis theorem and conclude that
\begin{align*}
	\widetilde{P}_\sigma(\rho_\sigma) \rightharpoonup \widetilde{P}(\rho) \quad \text{ weakly in } L^1(Q_T).
\end{align*}
Lemma \ref{lem:density-lower-upper-bound} also implies  
$$
W_\delta(\rho)=W(\rho)\quad \text{and}\quad H_\delta(\rho)=H(\rho).
$$ 

With the estimate \eqref{eqs:uniform-P-sigma}, similarly to \eqref{es-rut-N}, we can obtain 
\begin{align*}
& \left|\big\langle \partial_t(\rho_\sigma\mathbf{u}_\sigma),\bphi\big\rangle_{L^1(0,T;(W^{2,4}(\mathbb{T}^3))^*),L^\infty(0,T;W^{2,4}(\mathbb{T}^3))}\right| 
\leq C \|\bphi\|_{L^\infty(0,T;W^{2,4}(\mathbb{T}^3))},
\end{align*}
for any $\bphi\in L^\infty(0,T;W^{2,4}(\mathbb{T}^3))$, which implies 
$$
\partial_t(\rho_\sigma \mathbf{u}_\sigma)\ \ \text{is uniformly bounded in} \ \ L^1(0,T;(W^{2,4}(\mathbb{T}^3))^*). 
$$
Then applying an argument similar to that in \cite[Section 4.1]{FS2021}, we can conclude that 
$$
\bu_\sigma \to \bu \quad \text{strongly}\quad L^{4}(0,T; L^{4}(\mathbb{T}^{3}))\quad \text{as}\ \ \sigma\to 0.
$$ 

The convergence results established above enable us to pass to the limit in the weak formulation \eqref{eqs:weak-formulation-ddt-sigma} and thereby obtain \eqref{eqs:weak-formulation-delta}. In addition, the continuity equation \eqref{eqs:app-mass-sigma1} can be achieved by taking $\sigma \to 0$ in \eqref{eqs:app-mass-P-sigma-delta}. Like in Proposition \ref{prop:weak-solution-sigma}, we can recover the energy estimate \eqref{eqs:energy-inequality-delta} and the conservation of mass \eqref{conservation law-000}. 
The remaining details are standard, and hence omitted.

The proof of Proposition \ref{prop:weak-solution-delta}  is complete. \qed




\section{Passage to the limits as $\delta \to 0$}
\label{sec:limit-delta}
In this section, we study the limit as $ \delta \to 0 $. This establishes the existence of a global weak solution $ (\bu,\rho,\mu_p) $ to the following Navier--Stokes--Korteweg type system  
\begin{subequations}
	\label{eqs:app-limit}
	\begin{align}
		\label{eqs:app-momentum-limit}
		& \partial_t (\rho \bu) 
		+ \Div(\rho\bu\otimes \bu) 
		+ \nabla \widetilde{P}(\rho)= \Div (\rho \bbd \bu)
		+ \zeta^2 \rho \nabla \Delta \rho
		- \frac{1}{\alpha} \nabla \mu_p, 
        \quad \text{in}\ \ Q_T,
        \\
		\label{eqs:app-mass-limit}
		& \pt \rho + \Div (\rho \bu) = 0, \quad \text{in}\ \ Q_T,
        \\
		& \Div \bu = \alpha \Delta \mu_{p},\quad \text{in}\ \ Q_T,
        \label{eqs:app-mup-limit}
	    \\
        &
		\label{eqs:app-initial-limit}
		\bu|_{t=0} =\bu_{0},\quad  \rho|_{t=0} =\rho_{0}, \quad\text{in}\ \  \bbt^3.
	\end{align}
\end{subequations}

Let us first introduce the definition of weak solutions to the problem \eqref{eqs:app-limit}.
\begin{definition}
	\label{def:limit}
	Let $T\in (0,\infty)$. A triple $(\bu,\rho,\mu_p)$ is called a weak solution to problem \eqref{eqs:app-limit} in $[0,T]$, if it satisfies the following properties. 
	\begin{enumerate}
		\item Regularity: $(\bu,\rho,\mu_{p})$ satisfies \begin{align*}
        &\mathbf{u}\in L^\infty(0,T; L^{2}(\mathbb{T}^{3})) \cap L^{2}(0,T; H^{1}(\mathbb{T}^{3})),    
			\\
            & \rho \in BC_w([0,T]; H^1(\mathbb{T}^{3})) \cap L^{2}(0,T; H^{2}(\mathbb{T}^{3}))\cap W^{1,\frac43}(0,T;L^2(\Omega)), 
			\\
            & \rho\in  L^\infty(Q_T) \quad \text{with} \quad \underline{\rho} < \rho(x,t) < 1\ \text{ a.e. in } \ Q_T,
			\\
            & \mu_{p} \in L^\infty(0,T;H^1(\mathbb{T}^3))\cap L^{2}(0,T; H^{2}(\mathbb{T}^{3})).
		\end{align*}
		\item Weak formulation of the momentum equation:
		\begin{align}
			\nonumber
		      &-\int_{\mathbb{T}^3} \rho_0\mathbf{u}_0\cdot\bphi(\cdot,0)\,\dx 
              - \int_0^{T}\int_{\mathbb{T}^{3}} \rho \bu\cdot\partial_{t}\bphi\,\dx\dt
             \\
             \notag
             & \qquad -\int_0^{T}\int_{\mathbb{T}^{3}} (\rho \bu \otimes \bu) :\nabla \bphi\,\dx\dt 
             + \int_0^{T}\int_{\mathbb{T}^{3}} \rho \bbd \bu:\bbd \bphi\,\dx\dt 
			\\ 
            \nonumber 
            &\quad =
			\int_0^{T}\int_{\mathbb{T}^{3}} \widetilde{P}(\rho)\Div \bphi\,\dx\dt 
            -\zeta^2\int_0^{T}\int_{\mathbb{T}^{3}} \rho \Delta \rho\Div \bphi\,\dx\dt  
			\\
            &\qquad 
            -\zeta^2\int_0^{T} \int_{\mathbb{T}^{3}} \Delta \rho (\nabla \rho \cdot\bphi) \,\dx\dt
			- \frac{1}{\alpha} \int_0^{T}\int_{\mathbb{T}^{3}} \nabla \mu_p\cdot\bphi\,\dx\dt,
			\label{weak1-sigma}
		\end{align}
        for all $\boldsymbol{\varphi}\in C_{c}^{\infty}([0,T); C^{\infty}(\mathbb{T}^{3}))$. 
		\item It holds
		\begin{align}
			&\label{weak2-sigma}
			\partial_{t}\rho +\Div(\rho\mathbf{u})=0,\quad \ \text{a.e.~in } Q_T,
			\\
            &\label{weak3-sigma}	\Div\mathbf{u}=\alpha \Delta \mu_{p},\qquad\quad  \text{a.e.~in } Q_T.
		\end{align}
		\item The triple $(\bu, \rho, \mu_{p})$ satisfies the energy inequality
		\begin{align}
			\label{eqs:energy-inequality}
			E(\mathbf{u}(t),\rho(t))
			+\int_0^{t}\int_{\mathbb{T}^{3}} \left(\rho \abs{\bbd \bu}^2 +|\nabla \mu_{p}|^{2}\right)\, \dx\dtau
			\leq E(\mathbf{u}_{0},\rho_0),
		\end{align}
        for almost all $t \in (0,T]$, 	where the total energy $E$ is defined as
		\begin{align*}
			E(\mathbf{u}, \rho)
			\coloneqq
			\int_{\mathbb{T}^{3}} \Big( \frac{1}{2}\rho|\mathbf{u}|^2 
            + \frac{\zeta^2}{2}\absm{\nabla \rho}^2 
            +\widetilde{F}(\rho)
			\Big)\,\dx.
		\end{align*}
		\item The triple $(\bu, \rho, \mu_{p})$ satisfies the BD-entropy estimate
		\begin{align}
			E_{\mathrm{BD}}(\bu(t),\rho(t))
			+ \int_0^t \int_{\bbt^3}\Big(\rho  \absm{\nabla \bu - (\nabla \bu)^\top}^2+   \absm{\nabla \mu_{p}}^2 + \zeta^2 \abs{\Delta \rho}^{2}  
			\Big)\, \dx\dtau
			\leq C,
            \label{eqs:BD-entropy}
		\end{align}
        for almost all $t \in (0,T]$, 
		where the BD-entropy is defined as 
		\begin{align*}
			E_{\mathrm{BD}}(\bu,\rho)
			= \int_{\bbt^3} 
			\Big(
			\frac{\rho}{2} \abs{\bu + \nabla \ln \rho}^2
            + \frac{\zeta^2}{2} \absm{\nabla \rho}^2 
			+ \widetilde{F}(\rho)
			\Big)\,\dx,
		\end{align*}
        and the constant $C>0$ depends on $E(\mathbf{u}_{0},\rho_0)$, $\underline{\rho}$ and $T$. 
        \item Conservation of mass:         
	\begin{align}
		\int_{\bbt^3} \rho(\cdot,t) \,\dx = \int_{\bbt^3} \rho_0 \,\dx, 
        \quad \forall\, t\in [0,T].
        \label{conservation law-1111}
	\end{align} 
		\item The initial condition is satisfied in the following sense: 
		\begin{align}
			& \rho|_{t=0}=\rho_{0},\ \text{ a.e.~in }\ \mathbb{T}^{3},\label{attain-initial-1-sigma}
		\end{align}	
        while $\mathbf{u}_0$ is attained in the weak formulation \eqref{weak1-sigma}.
	\end{enumerate}
\end{definition}
Now we are in a position to state the main result of this section.
\begin{theorem}\label{thm:NSK}
Suppose that Assumption \ref{main assumption} is satisfied and $\widetilde{F}$, $\widetilde{P}$ are defined as in \eqref{eqs:resuced-F}, \eqref{eqs:new-entropy}, respectively. Let $T\in (0,\infty)$. Then for any initial data satisfying $(\bu_{0},\rho_{0}) \in L^2(\bbt^3) \times H^1(\bbt^3)$ with $ \widetilde{F}(\rho_0)\in L^1(\mathbb{T}^3)$,  
problem \eqref{eqs:app-limit} admits a global weak solution $(\mathbf{u}, \rho, \mu_p)$ on $[0,T]$ in the sense of Definition \ref{def:limit}. 
\end{theorem}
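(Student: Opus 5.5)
The plan is to take the solutions $(\bu_\delta,\rho_\delta)$ of problem \eqref{eqs:app-delta} from Proposition \ref{prop:weak-solution-delta}, derive bounds uniform in $\delta$, and pass to the limit $\delta\to0$. The energy estimate \eqref{eqs:energy-inequality-delta} gives uniform control of $\sqrt{\rho_\delta}\bu_\delta$ in $L^\infty L^2$, of $\bbd\bu_\delta$ in $L^2L^2$ (using $\rho_\delta>\underline{\rho}$), of $\nabla\rho_\delta$ in $L^\infty L^2$ and of $\widetilde F(\rho_\delta)$ in $L^\infty L^1$. The initial energy $E_\delta(\bu_0,\rho_{0,\delta})$ is bounded uniformly by \eqref{Fc-delta-b} and \eqref{eqs:rho0-bounds}, since $\delta\|\Delta\rho_{0,\delta}\|^2\lesssim\delta^{1/2}$.

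\textbf{Step 1: BD-entropy at the $\delta$-level.} Test \eqref{eqs:weak-formulation-delta} with $\nabla\ln\rho_\delta$. This is admissible because $\rho_\delta\in L^2H^3$ is bounded away from zero, although strictly it requires a mollification argument using the Appendix. The goal is to reproduce Lemma \ref{lem:BD-entropy}. The $\delta$-terms are handled as follows:
\begin{itemize}
\item $\delta\rho\nabla\Delta^2\rho\cdot\nabla\ln\rho$ produces $\delta\|\nabla\Delta\rho\|^2\ge0$;
\item $\delta\rho\nabla W'\cdot\nabla\ln\rho=\delta W''|\nabla\rho|^2\ge0$;
\item the commutators from $\delta\Delta\rho$ in the continuity equation are compensated by the $\delta\nabla\rho\cdot\nabla\bu$ term;
\item the remaining pieces such as $\delta\Delta^2\bu\cdot\nabla\ln\rho$ are absorbed via Young's inequality into $\delta\|\Delta\bu\|^2$ and $\delta\|\nabla\Delta\rho\|^2$, with lower-order remainders handled by Gronwall.
\end{itemize}
For the damping term I rewrite $\nabla\Delta^{-1}\Div\bu\cdot\nabla\ln\rho$, using the continuity equation, into $\frac{d}{dt}\int\ln\rho$ plus $\delta$-errors. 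Adding the energy estimate yields uniform bounds on $\zeta^2\|\Delta\rho_\delta\|_{L^2(Q_T)}^2$, on $\int\widetilde F''(\rho_\delta)|\nabla\rho_\delta|^2$ (convex part), and on $\sup_t\int\rho_\delta|\bu_\delta+\nabla\ln\rho_\delta|^2$. Hence $\rho_\delta$ is bounded in $L^2H^2$. With $\mu_{p,\delta}:=\alpha^{-1}\Delta^{-1}\Div\bu_\delta$, the bound $\|\nabla\mu_{p,\delta}\|_{L^2}$ follows from the energy, and elliptic regularity together with $\bu_\delta\in L^\infty L^2$ gives $\mu_{p,\delta}\in L^\infty H^1\cap L^2H^2$.

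\textbf{Step 2: integrability and equi-integrability of $\widetilde P(\rho_\delta)$ uniformly in $\delta$.} I repeat Lemma \ref{lem:uniform-P} and Lemma \ref{lem:uniform-P-chi-sigma}, testing with $\psi\cB[\rho_\delta^{(0)}]$ and $\psi\cB[\chi(\rho_\delta)-\text{mean}]$, where $\chi(r)=\ln(r-\underline{\rho})-\ln(1-r)$. The $\delta$-dependent terms now carry a factor $\delta$ or $\delta^{1/2}$ against bounded quantities; for instance $\delta\nabla\Delta\rho\otimes\nabla\rho:\nabla\cB[\cdot]$ is handled by $\delta\|\nabla\Delta\rho\|_{L^2L^2}\lesssim\delta^{1/2}$. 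The capillary terms $\rho\Delta\rho\,\chi$ and $\Delta\rho\nabla\rho\cdot\cB[\chi]$ are controlled by Step 1, since $\Delta\rho\in L^2L^2$ and $\nabla\rho\in L^2L^6$.

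\textbf{Main obstacle.} The term $\rho\bu\cdot\cB[\partial_t\chi(\rho)]$ with $\partial_t\chi=-\chi'(\rho)\Div(\rho\bu)+\delta\chi'\Delta\rho$ is where I expect the real difficulty. I would rewrite it as $\Div(\chi\bu)+(\rho\chi'-\chi)\Div\bu$ and bound $\rho\chi'\Div\bu$ using $\chi'\in L^{\beta+1}$ (integrability of $\widetilde P$ from the first test) against $\Div\bu\in L^2$, combined with $\bu\in L^{10/3}$. This is exactly where $\beta\ge3/2$ is needed, through Hölder exponents such as $1/(\beta+1)+1/2+3/10\le1$. The other weak spot is $\delta\chi'\Delta\rho$, which must be dominated via $\widetilde F''|\nabla\rho|^2$ from the BD-entropy after integration by parts.

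\textbf{Step 3: passage to the limit.} Since $\rho_\delta\in L^2H^2$ and $\partial_t\rho_\delta=-\Div(\rho_\delta\bu_\delta)+\delta\Delta\rho_\delta$ is bounded in $L^{4/3}L^2$ (by $\bu\in L^4L^3$ and $\nabla\rho\in L^4L^6$), Aubin--Lions gives $\rho_\delta\to\rho$ strongly in $L^2H^1$ and a.e. As in Lemma \ref{lem:density-lower-upper-bound}, this yields $\rho\in(\underline{\rho},1)$ a.e. Dunford--Pettis then gives $\widetilde P(\rho_\delta)\rightharpoonup\widetilde P(\rho)$ in $L^1$. A bound on $\partial_t(\rho_\delta\bu_\delta)$ in $L^1(W^{2,4})^*$ gives strong convergence of $\bu_\delta$ in $L^2L^2$ as in Section \ref{sec:limit-sigma}. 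The terms $\rho\Delta\rho$ and $\Delta\rho\nabla\rho$ pass to the limit by weak-times-strong convergence, the $\delta$-terms vanish, and $\rho_{0,\delta}\to\rho_0$ in $H^1$ by \eqref{eqs:rho0-convergence}. Finally, the energy and BD inequalities follow by lower semicontinuity, and the identity $\Div\bu=\alpha\Delta\mu_p$ holds by construction.
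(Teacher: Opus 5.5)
Your architecture is the paper's: a BD-entropy bound giving $\rho_\delta\in L^2(0,T;H^2)$, Bogovski\u{\i} tests with $\rho_\delta^{(0)}$ and $\chi_\delta^{(0)}$, then Dunford--Pettis. However, Step 2 has a genuine hole.

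\textbf{The terms involving $\nabla\chi(\rho_\delta)$.} Your claim that every $\delta$-term is ``a factor $\delta$ or $\delta^{1/2}$ against bounded quantities'' fails in the equi-integrability test for two terms:
\begin{itemize}
\item $\delta\int_0^T\psi\int_{\mathbb{T}^3}\rho_\delta\nabla\Delta\rho_\delta\cdot\nabla\chi(\rho_\delta)\,\mathrm{d}x\mathrm{d}t$;
\item $\delta\int_0^T\psi\int_{\mathbb{T}^3}\Delta\mathbf{u}_\delta\cdot\Delta\mathcal{B}[\chi_\delta^{(0)}]\,\mathrm{d}x\mathrm{d}t$, where $\Delta\mathcal{B}[\chi_\delta^{(0)}]=\nabla\chi(\rho_\delta)$.
\end{itemize}
Both involve $\nabla\chi(\rho_\delta)=\chi'(\rho_\delta)\nabla\rho_\delta$, and nothing you have bounds this in $L^2(Q_T)$: $\chi'\in L^{\beta+1}(Q_T)$ times $\nabla\rho_\delta\in L^2(0,T;L^6)$ is not in $L^2$. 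Meanwhile the energy and BD bounds control only $\sqrt{\delta}\Delta\mathbf{u}_\delta$ and $\sqrt{\delta}\nabla\Delta\rho_\delta$ in $L^2(Q_T)$. So you need the uniform bound $\|\sqrt{\delta}\,\chi'(\rho_\delta)\nabla\rho_\delta\|_{L^2(Q_T)}\le C$.

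The paper obtains it from Agmon's inequality. $\sqrt{\delta}\nabla\rho_\delta$ is bounded in $L^\infty(0,T;H^1)$ by the energy and in $L^2(0,T;H^2)$ by the BD bound, hence in $L^4(0,T;L^\infty)$. Interpolating $L^\infty(0,T;L^\beta)$ with $L^{\beta+1}(Q_T)$ puts $\chi'(\rho_\delta)$ in $L^4(0,T;L^2)$. This is a second, independent use of $\beta\ge 3/2$. The convex part of $\int\widetilde{F}''(\rho_\delta)|\nabla\rho_\delta|^2$ that you retain would give the bound directly under a pointwise estimate $|\chi'|^2\lesssim\widetilde{F}_c''+C$. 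That estimate is true for $F_c=(1-r^2)^{-\beta}$ but is not part of Assumption \ref{main assumption}.

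\textbf{The exponent count for $\rho_\delta\chi'(\rho_\delta)\Div\mathbf{u}_\delta$.} Your count is wrong: $\frac{1}{\beta+1}+\frac12+\frac{3}{10}\le 1$ requires $\beta\ge 4$. What makes $\beta=3/2$ suffice is the derivative gained by $\mathcal{B}$. Since $\chi'\Div\mathbf{u}_\delta\in L^{10/9}(Q_T)$, its Bogovski\u{\i} image lies in $L^{10/9}(0,T;W^{1,10/9})\hookrightarrow L^{10/9}(0,T;L^{30/17})$, which pairs with $\mathbf{u}_\delta\in L^{10}(0,T;L^{30/13})$. The term $\delta\chi'(\rho_\delta)\Delta\rho_\delta$ is handled the same way, with $\Delta\rho_\delta\in L^2(Q_T)$ in place of $\Div\mathbf{u}_\delta$; no integration by parts is needed.

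\textbf{Justification of Step 1.} Your justification targets the wrong difficulty. Testing \eqref{eqs:weak-formulation-delta} with $\nabla\ln\rho_\delta$ is problematic not because of the logarithm (indeed $\rho_\delta>\underline{\rho}$) but because of the singular pressure. $\rho_\delta$ may approach $\underline{\rho}$ and $1$, and $\widetilde{P}(\rho_\delta)$ is only known to lie in $L^1(Q_T)$. Hence $\int\widetilde{P}(\rho_\delta)\Delta\ln\rho_\delta=-\int\widetilde{F}''(\rho_\delta)|\nabla\rho_\delta|^2$ is not known to be finite, and mollifying the test function does not cure this.

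The paper instead performs the BD computation for $(\mathbf{u}_\sigma,\rho_\sigma)$. There $\widetilde{P}_\sigma$ is regular on the range $(\underline{\rho}/2,1+\underline{\rho}/2)$ of $\rho_\sigma$. It bounds all remainders uniformly in $\sigma$ and $\delta$, using $\widetilde{F}_\sigma''\ge-\omega\zeta^2$, and then lets $\sigma\to0$ by lower semicontinuity. You should do the same.

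\textbf{Minor slip in Step 3.} $\nabla\rho_\delta$ is bounded in $L^2(0,T;L^6)$, not $L^4(0,T;L^6)$. Together with $\mathbf{u}_\delta\in L^4(0,T;L^3)$ this still gives $\partial_t\rho_\delta\in L^{4/3}(0,T;L^2)$.
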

\begin{remark}
Theorem \ref{thm:NSK} establishes the existence theory for a class of Navier--Stokes--Korteweg system with singular potential and density-dependent viscosity, which is of independent interest. In this regard, we also refer to \cite{FLM2016} for a similar system but without the higher-order capillary force term $\rho \nabla \Delta \rho$ in the momentum equation. The main difference arises from the different form of the free energy and, consequently, the chemical potential.
\end{remark}

To prove Theorem \ref{thm:NSK}, we construct approximate solutions denoted by  $(\bu_\delta,\rho_\delta)$ using Proposition \ref{prop:weak-solution-delta}. After deriving the necessary estimates for $(\bu_\delta,\rho_\delta)$, which are uniform in $\delta$, we pass to the limit as $\delta \to 0$. 

\subsection{Energy estimates and mass conservation}
From \eqref{Fc-delta}, \eqref{rho-UpLo-delta}, \eqref{eqs:energy-inequality-delta}, \eqref{energy-delta} and the definition of $\rho_{0,\delta}$ (cf. \eqref{eqs:rho0-bounds}, \eqref{eqs:rho0-convergence}), we obtain the following regularity properties of $(\mathbf{u}_\delta, \rho_\delta)$, which hold with uniform bounds in the corresponding spaces for $0<\delta\ll 1$:
\begin{align}
& \sqrt{\rho_\delta}\mathbf{u}_\delta, \  \mathbf{u}_\delta\in L^\infty(0,T;L^2(\mathbb{T}^3)),  
\qquad 
\sqrt{\rho_\delta}\mathbb{D} \mathbf{u}_\delta,\  \mathbb{D} \mathbf{u}_\delta,\  \delta^\frac12 \Delta \mathbf{u}_\delta\in L^2(0,T;L^2(\mathbb{T}^3)),
\label{eqs:uniform-u-del}
\\
& \zeta\nabla \rho_\delta,\,\delta^\frac12\Delta \rho_\delta\in L^\infty(0,T;L^2(\mathbb{T}^3)),
\qquad \delta \nabla \Delta \rho_\delta,\  \delta^\frac12\zeta \Delta \rho_\delta\in L^2(0,T;L^2(\mathbb{T}^3)),
\label{eqs:uniform-rho-del}
\\
& \widetilde{F}(\rho_\delta) \in L^\infty(0,T;L^1(\mathbb{T}^3)).
\label{eqs:uniform-FWH-del}
\end{align}
In addition, we have the conservation of mass (see \eqref{conservation law-000})
	\begin{align}
		\label{eqs:conservation-mass-delta-b}
		\mean{\rho_\delta(\cdot,t)} = \mean{\rho_{0,\delta}} \in \big(\underline{\rho}, 1\big),\quad \forall\, t\in[0,T].
	\end{align}
Let $\mu_{p,\delta}$ be a solution to the equation 
\begin{align}
\label{eqs:app-mup-delta} 
\Div \mathbf{u}_\delta = \alpha\Delta\mu_{p,\delta}\quad \text{in}\ \ Q_T,
\end{align}
which satisfies $\mean{\mu_{p,\delta}(t)}=0$ for almost all $t\in [0,T]$. It follows from \eqref{eqs:uniform-u-del}, the elliptic estimate and the Poincar\'{e}--Wirtinger inequality that 
\begin{align}
	\label{eqs:uniform-mup-H2}
	\mu_{p,\delta} & \text{ is uniformly bounded in } L^\infty(0,T;H^1(\mathbb{T}^3))\cap L^2(0,T;H^2(\bbt^3)).
\end{align}

\subsection{BD-entropy estimate} 
\label{sec:uniform-BD-app} 
To obtain better estimates, we derive the Bresch--Desjardins inequality for $(\mathbf{u}_\delta,\rho_\delta)$. 
\begin{lemma}
Let $(\bu_{\delta},\rho_{\delta})$ be an approximate solution given by Proposition \ref{prop:weak-solution-delta}. 
    Then we have 
\begin{align}
&\sup_{0 \leq t \leq T}  E^{\delta}_{\mathrm{BD}}(\bu_\delta(t),\rho_\delta(t)) 
+ \frac{\delta}{2} \int_0^T \int_{\bbt^3} \absm{\Delta \bu_\delta}^2 \,\dxdt  
+  \frac{1}{\alpha^2} \int_0^T \int_{\bbt^3} \absm{\nabla \Delta^{-1} \Div \bu_\delta}^2 \,\dxdt
\notag\\
&\qquad +
\frac{1}{4} \int_0^T\int_{\bbt^3} \rho_\delta \absm{\nabla \bu_\delta - (\nabla \bu_\delta)^\top}^2 \,\dxdt 
+ \frac{\zeta^2}{2} \int_0^T \int_{\bbt^3} \absm{\Delta \rho_\delta}^2 \,\dxdt  
 \notag\\
&\qquad 
    + \frac{\delta}{2} \int_0^T \int_{\bbt^3} \absm{\nabla\Delta \rho_\delta}^2 \,\dxdt 
     + \frac{\delta}{2} \int_0^T\int_{\mathbb{T}^3} \frac{|\Delta\rho_\delta|^2}{\rho_\delta}\,\dxdt  
     + \delta \int_0^T \int_{\bbt^3} W''(\rho_\delta) \abs{\nabla \rho_\delta}^{2} \,\dx\dt
\notag \\
&\quad \leq E^{\delta}_{\mathrm{BD}}(\bu_0,\rho_{0,\delta}) +  C(T,\omega, \underline{\rho}, E_{\delta}(\bu_0,\rho_{0,\delta}),C_*),  
    \label{eqs:BD-entropy-delta-1}
\end{align}
    where the approximate BD-entropy $E_{\mathrm{BD}}^\delta$ is defined as 
\begin{align*}
	E_{\mathrm{BD}}^\delta(\bu_\delta,\rho_\delta)
	\coloneqq \int_{\bbt^3}
	\Big( 
	\frac{\rho_\delta}{2} \abs{\bu_\delta + \nabla \ln \rho_\delta}^2
	+ \frac{\zeta^2}{2}|\nabla \rho_\delta|^2 
    + \widetilde{F}(\rho_\delta) 
	  + \frac{\delta}{2} \absm{\Delta \rho_\delta}^2 
	+ \delta W(\rho_\delta)
	\Big) \,\dx.
\end{align*}

\end{lemma}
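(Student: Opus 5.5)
We follow the formal computation of Lemma~\ref{lem:BD-entropy}, now applied to the regularized system \eqref{eqs:app-delta}. The extra terms coming from $\delta$ will be controlled using the regularity $\rho_\delta\in L^2(0,T;H^3)$, $\bu_\delta\in L^2(0,T;H^2)$ and $\partial_t\rho_\delta\in L^2(0,T;H^1)$, together with the bounds $\underline{\rho}<\rho_\delta<1$ from Proposition~\ref{prop:weak-solution-delta}. With this regularity, $\nabla\ln\rho_\delta$ is an admissible test function after a time mollification.

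\textbf{Step 1 (auxiliary identity for $\rho|\nabla\ln\rho|^2$).} Differentiate \eqref{eqs:app-mass-sigma1} in space and test with $\nabla\ln\rho_\delta$, as in Lemma~\ref{lem:BD-entropy-logrho}. This gives the identity of that lemma plus the contribution of $\delta\Delta\rho_\delta$, namely
\[
\delta\int_{\bbt^3}\nabla\Delta\rho_\delta\cdot\nabla\ln\rho_\delta\,\dx-\frac{\delta}{2}\int_{\bbt^3}|\nabla\ln\rho_\delta|^2\Delta\rho_\delta\,\dx .
\]
After integration by parts this is $-\delta\int_{\bbt^3}|\Delta\rho_\delta|^2/\rho_\delta\,\dx$ up to a term $\delta\int_{\bbt^3}\Delta\rho_\delta\,|\nabla\rho_\delta|^2/\rho_\delta^2\,\dx$. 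We bound that term by Young's inequality, giving $\frac{\delta}{2}\int_{\bbt^3}|\Delta\rho_\delta|^2/\rho_\delta\,\dx+C\delta\int_{\bbt^3}|\nabla\rho_\delta|^4\,\dx$. The quartic term is absorbed using the Gagliardo--Nirenberg bound $\|\nabla\rho\|_{L^4}^2\lesssim\|\rho\|_{L^\infty}\|\nabla^2\rho\|_{L^2}$ and the fact that $\rho_\delta$ is bounded, so it costs at most a small multiple of $\zeta^2\int_{\bbt^3}|\Delta\rho_\delta|^2\,\dx$.

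\textbf{Step 2 (momentum equation tested with $\nabla\ln\rho_\delta$).} This is done in the weak formulation \eqref{eqs:weak-formulation-delta} with $\bphi=\nabla\ln\rho_\delta$, first smoothed in time. The time derivative is handled exactly as in the proof of Lemma~\ref{lem:BD-entropy}, using \eqref{eqs:app-mass-sigma1}, which produces additional $\delta\Delta\rho_\delta$ terms of the same type as in Step 1. The remaining terms are treated as follows.
\begin{itemize}
\item The pressure term $\nabla\widetilde P(\rho_\delta)=\rho_\delta\nabla\widetilde F'(\rho_\delta)$ gives $\int_{\bbt^3}\widetilde F''(\rho_\delta)|\nabla\rho_\delta|^2\,\dx$. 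Its concave part is bounded by $\omega\zeta^2\|\nabla\rho_\delta\|_2^2$, which is controlled by the energy \eqref{eqs:energy-inequality-delta}.
\item The term $\delta\rho_\delta\nabla W'(\rho_\delta)$ gives the dissipation $\delta\int_{\bbt^3}W''(\rho_\delta)|\nabla\rho_\delta|^2\,\dx$.
\item The term $\delta\rho_\delta\nabla\Delta^2\rho_\delta$ gives $\delta\int_{\bbt^3}\nabla\Delta^2\rho_\delta\cdot\nabla\rho_\delta\,\dx=\delta\|\nabla\Delta\rho_\delta\|_2^2$, interpreted via \eqref{higher-terms}.
\item The term $\zeta^2\rho_\delta\nabla\Delta\rho_\delta$ gives $\zeta^2\|\Delta\rho_\delta\|_2^2$.
\item The damping term gives $\frac{1}{\alpha^2}\int_{\bbt^3}\Div\bu_\delta\ln\rho_\delta\,\dx$. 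Since $\ln\rho_\delta$ is bounded, we estimate this directly by $C\|\nabla\bu_\delta\|_2$ instead of rewriting it as a time derivative, because \eqref{eqs:app-mass-sigma1} now has a viscous term. This avoids the $\ln\rho$ term appearing in the entropy.
\end{itemize}

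\textbf{Step 3 (combine with the energy inequality).} Add the result of Steps 1–2 to the energy inequality \eqref{eqs:energy-inequality-delta}, which supplies the $\delta|\Delta\bu_\delta|^2$ and damping dissipation. The symmetric part $\int_{\bbt^3}\rho_\delta|\bbd\bu_\delta|^2\,\dx$ cancels, leaving the antisymmetric part, as in Lemma~\ref{lem:BD-entropy}. The remaining $\delta$-cross terms are absorbed into half of the $\delta$-dissipations:
\begin{itemize}
\item $\delta\nabla\rho_\delta\cdot\nabla\bu_\delta\cdot\nabla\ln\rho_\delta$, by H\"older's inequality and $\|\nabla\rho\|_{L^4}$;
\item $\delta\Delta\bu_\delta\cdot\Delta\nabla\ln\rho_\delta$, by Young's inequality against $\frac{\delta}{2}\|\Delta\bu_\delta\|_2^2$ and $\frac{\delta}{2}\|\nabla\Delta\rho_\delta\|_2^2$, with lower-order pieces bounded using $\rho_\delta\ge\underline{\rho}$.
\end{itemize}
Finally, apply Gronwall's inequality.

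\textbf{Main obstacle.} The hardest step is the term $\delta\Delta\bu_\delta\cdot\nabla\Delta\ln\rho_\delta$ together with the nonlinear parts of $\nabla\Delta\ln\rho_\delta$. These involve $|\nabla\rho_\delta|^3$ and $\nabla\rho_\delta\,\nabla^2\rho_\delta$, and must be bounded using $\delta\|\nabla\Delta\rho_\delta\|^2$ and the $H^2$ bound $\delta\|\Delta\rho_\delta\|_{L^\infty_tL^2}^2\le C$ without losing uniformity in $\delta$. I would first try Gagliardo--Nirenberg interpolation between $\|\nabla\rho_\delta\|_{L^\infty_tL^2}$ and $\|\nabla\Delta\rho_\delta\|_2$, so that the constants come out with the correct power of $\delta$.
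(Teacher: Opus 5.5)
\textbf{The gap.} Most of your computation matches the paper's. You test with $\nabla\ln\rho$, derive the modified identity for $\int\rho|\nabla\ln\rho|^2$ with its $\delta\Delta\rho$ contributions, add the energy inequality so that $\int\rho|\mathbb{D}\bu|^2$ cancels, and absorb the $\delta$-cross terms via Gagliardo--Nirenberg. Bounding $\alpha^{-2}\int_{\bbt^3}\Div\bu_\delta\ln\rho_\delta\,\dx$ directly by $C\|\nabla\bu_\delta\|_{L^2}$ is a harmless simplification; the paper instead turns it into a time derivative of $\int\ln\rho$ and then estimates the remainder $-\frac{\delta}{\alpha^2}\int\Delta\rho/\rho$. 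The problem is your claim that $\nabla\ln\rho_\delta$ is an admissible test function in \eqref{eqs:weak-formulation-delta}.

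At this level the only information on the singular pressure is $\widetilde P(\rho_\delta)\in L^1(Q_T)$, obtained as a weak $L^1$-limit via Dunford--Pettis. So the pressure term $\int_0^T\int_{\bbt^3}\widetilde P(\rho_\delta)\Div\bphi\,\dx\dt$ extends by density only to $\bphi$ with $\Div\bphi\in L^\infty(Q_T)$. For $\bphi=\nabla\ln\rho_\delta$ you have $\Div\bphi=\Delta\rho_\delta/\rho_\delta-|\nabla\rho_\delta|^2/\rho_\delta^2$. The regularity $\rho_\delta\in L^\infty(0,T;H^2)\cap L^2(0,T;H^3)$ gives only $\Delta\rho_\delta\in L^2(0,T;L^6)$, so the product with $\widetilde P(\rho_\delta)$ need not be integrable, and time mollification does not help. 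The first item of your Step 2 also uses the chain rule $\nabla\widetilde P(\rho_\delta)=\rho_\delta\widetilde F''(\rho_\delta)\nabla\rho_\delta$ and the identity $-\int\widetilde P(\rho_\delta)\Delta\ln\rho_\delta=\int\widetilde F''(\rho_\delta)|\nabla\rho_\delta|^2$. Both presuppose $\widetilde F''(\rho_\delta)|\nabla\rho_\delta|^2\in L^1$. Nothing guarantees this: $\rho_\delta\in(\underline\rho,1)$ holds only almost everywhere, and the available estimates do not prevent the continuous function $\rho_\delta$ from touching the singular values $\underline\rho$ or $1$.

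\textbf{How the paper avoids it.} The paper does the whole computation one approximation level earlier, for the solutions $(\bu_\sigma,\rho_\sigma)$ of Proposition \ref{prop:weak-solution-sigma}. There $\widetilde F_\sigma\in C^2(\mathbb{R})$ is a quadratic extension with $\widetilde F_\sigma''$ bounded, so $\widetilde P_\sigma(\rho_\sigma)$ is bounded and $\nabla\widetilde P_\sigma(\rho_\sigma)=\rho_\sigma\widetilde F_\sigma''(\rho_\sigma)\nabla\rho_\sigma\in L^\infty(0,T;L^2)$. Hence testing with $\nabla\ln\rho_\sigma$ is legitimate. The convex part of $\widetilde F_\sigma''|\nabla\rho_\sigma|^2$ is dropped, and the concave part is controlled by $\widetilde F_\sigma''\ge-\omega\zeta^2$ and the energy. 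All constants are uniform in $\sigma$, with the initial entropy controlled via $\widetilde F_{c,\sigma}\le\widetilde F_c$ and \eqref{Fc-delta-b}; this gives \eqref{eqs:BD-entropy-delta-0}. The stated inequality then follows by letting $\sigma\to0$, using $\rho_\sigma\to\rho_\delta$ in $C([0,T];H^2)$, weak lower semicontinuity, and Fatou's lemma for $\widetilde F_\sigma(\rho_\sigma)\ge -C_*$.

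If you move your Steps 1--3 to the $\sigma$-level, your estimates go through. You would replace $\underline\rho$ by $\underline\rho/2$ in the lower bounds for $\rho$, since $\rho_\sigma$ ranges over $(\underline\rho/2,1+\underline\rho/2)$, and replace $W$ by $W_\delta$. Staying at the $\delta$-level would instead need an explicit truncation of $\ln\rho$ near the endpoints plus a limiting argument, which your proposal does not supply.
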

\begin{proof}
We start with the approximate solution $(\mathbf{u}_\sigma,\rho_\sigma)$ given by  Proposition \ref{prop:weak-solution-sigma}. Using the regularized equation \eqref{eqs:app-mass-P-sigma-delta}, we have (cf. \cite[(A.2)]{MSZ2025})
\begin{align*}
\int_{\mathbb{T}^3}(\rho_\sigma \mathbf{u}_\sigma)\partial_t\nabla \ln \rho_\sigma \,\dx 
= \int_{\mathbb{T}^3} \frac{1}{\rho_\sigma} (\Div(\rho_\sigma\mathbf{u}_\sigma))^2\,\dx 
-\delta\int_{\mathbb{T}^3} \frac{\Delta\rho_\sigma}{\rho_\sigma}  \Div(\rho_\sigma\mathbf{u}_\sigma)\,\dx,\quad \text{a.e. in}\ \ (0,T).
\end{align*}
Testing \eqref{eqs:weak-formulation-ddt-sigma} by $\nabla \ln \rho_\sigma$, following the calculations in Lemma \ref{lem:BD-entropy} and those in \cite[Appendix A]{MSZ2025} for additional regularizing terms involving $\delta$, we obtain 
 \begin{align*}
     & \int_{\mathbb{T}^3} \partial_t(\rho_\sigma\mathbf{u}_\sigma)\cdot\nabla \ln \rho_\sigma \,\mathrm{d}x -\frac{1}{\alpha^2} \ddt\int_{\bbt^3} \ln \rho_\sigma \,\dx
     \\
     &\quad = -\int_{\mathbb{T}^3} (\rho_\sigma \mathbf{u}_\sigma\otimes \mathbf{u}_\sigma)\cdot\nabla \ln \rho_\sigma \,\mathrm{d}x 
     + \int_{\mathbb{T}^3} \nabla \mathbf{u}_\sigma : (\nabla \rho_\sigma\otimes \nabla \ln \rho_\sigma)\,\mathrm{d}x \\
     &\qquad -\int_{\mathbb{T}^3} \Delta\rho_\sigma\Div \mathbf{u}_\sigma\, \dx 
     - \int_{\mathbb{T}^3}   (\widetilde{F}''_\sigma(\rho_\sigma) +\delta W_\delta''(\rho_\sigma))|\nabla \rho_\sigma|^2 \,\mathrm{d}x \\
     & \qquad - \zeta^2 \int_{\mathbb{T}^3} |\Delta \rho_\sigma|^2\, \dx -\delta \int_{\mathbb{T}^3} |\nabla\Delta\rho_\sigma|^2\,\dx \\
     & \qquad -\delta\int_{\mathbb{T}^3} (\nabla \rho_\sigma\cdot\nabla \mathbf{u}_\sigma)\cdot\nabla \ln \rho_\sigma\,\dx 
     -\delta\int_{\mathbb{R}}\Delta \mathbf{u}_\sigma \cdot\nabla \Delta \ln \rho_\sigma\,\dx \\
     &\qquad - \frac{\delta}{\alpha^2} \int_{\bbt^3} \frac{\Delta \rho_\sigma}{\rho_\sigma} \,\dx, \quad \text{a.e. in}\ \ (0,T).
 \end{align*}
Similarly to Lemma \ref{lem:BD-entropy-logrho}, using \eqref{eqs:app-mass-P-sigma-delta}, we can deduce that  
\begin{align*}
		& \frac{1}{2}\frac{\mathrm d}{\mathrm dt}\int_{\mathbb{T}^{3}} \rho_\sigma|\nabla \ln \rho_\sigma|^2\, \mathrm{d} x+\int_{\mathbb{T}^{3}} \nabla\Div\mathbf{u}_\sigma\cdot\nabla\rho_\sigma\, \mathrm{d} x+\int_{\mathbb{T}^{3}} \rho_\sigma\bbd\mathbf{u}_\sigma:\nabla \ln \rho_\sigma\otimes \nabla \ln \rho_\sigma\, \mathrm{d} x\\
        &\quad -\frac{\delta}{2}\int_{\mathbb{T}^3} \Delta \rho_\sigma|\nabla \ln \rho_\sigma|^2\,\mathrm{d}x + \delta \int_{\mathbb{T}^3} \frac{|\Delta\rho_\sigma|^2}{\rho_\sigma}\,\mathrm{d}x =0, \quad \text{a.e. in}\ \ (0,T).
	\end{align*}    
Combining the above identities, we find 
\begin{align}
& \frac{\mathrm{d}}{\dt}\int_{\mathbb{T}^3} \Big(\rho_\sigma\mathbf{u}_\sigma\cdot \nabla \ln \rho_\sigma + \frac12  \rho_\sigma|\nabla \ln \rho_\sigma|^2- \frac{1}{\alpha^2} \ln \rho_\sigma 
\Big)\,\mathrm{d}x + \frac{1}{4} \int_{\bbt^3} \rho_\sigma \absm{\nabla \bu_\sigma - (\nabla \bu_\sigma)^\top}^2 \,\dx
\notag \\
&\qquad + \zeta^2 \int_{\mathbb{T}^3} |\Delta \rho_\sigma|^2\, \dx + \delta \int_{\mathbb{T}^3} |\nabla\Delta\rho_\sigma|^2\,\dx + \delta \int_{\mathbb{T}^3} \frac{|\Delta\rho_\sigma|^2}{\rho_\sigma}\,\mathrm{d}x 
+ \delta \int_{\mathbb{T}^3}  W_\delta''(\rho_\sigma)|\nabla \rho_\sigma|^2 \,\mathrm{d}x
\notag \\
&\quad 
= \int_{\mathbb{T}^3} \rho_\sigma|\mathbb{D}\mathbf{u}_\sigma|^2\,\dx 
-\delta\int_{\mathbb{T}^3} (\nabla \rho_\sigma\cdot\nabla \mathbf{u}_\sigma)\cdot\nabla \ln \rho_\sigma\,\dx 
+ \frac{\delta}{2}\int_{\mathbb{T}^3} \Delta \rho_\sigma|\nabla \ln \rho_\sigma|^2\,\mathrm{d}x 
\notag \\
&\qquad 
-\delta\int_{\mathbb{T}^3} \frac{\Delta\rho_\sigma}{\rho_\sigma}  \Div(\rho_\sigma\mathbf{u}_\sigma)\,\dx
-\delta\int_{\mathbb{R}}\Delta \mathbf{u}_\sigma\cdot\nabla \Delta \ln \rho_\sigma\,\dx 
\notag \\
&\qquad - \int_{\mathbb{T}^3}    \widetilde{F}''_\sigma(\rho_\sigma)  |\nabla \rho_\sigma|^2 \,\mathrm{d}x 
- \frac{\delta}{\alpha^2} \int_{\bbt^3} \frac{\Delta \rho_\sigma}{\rho_\sigma} \,\dx.
\label{BD-pat-sigma}
\end{align}
Define 
\begin{align*}
	E_{\mathrm{BD}}^\sigma(\bu_\sigma, \rho_\sigma)
	& \coloneqq E_{\sigma,\delta} (\mathbf{u}_\sigma,\rho_\sigma) + \int_{\mathbb{T}^3} \Big(\rho_\sigma\mathbf{u}_\sigma\cdot \nabla \ln \rho_\sigma + \frac12  \rho_\sigma|\nabla \ln \rho_\sigma|^2\Big)\,\mathrm{d}x 
    \\
    & \ = \int_{\bbt^3}
	\Big( 
	\frac{\rho_\sigma}{2} \abs{\bu_\sigma + \nabla \ln \rho_\sigma}^2
	+ \frac{\zeta^2}{2}|\nabla \rho_\sigma|^2 
    + \widetilde{F}_\sigma(\rho_\sigma) 
	  + \frac{\delta}{2} \absm{\Delta \rho_\sigma}^2 
	+ \delta W_\delta(\rho_\sigma)
	\Big) \,\dx.
\end{align*}
Integrating \eqref{BD-pat-sigma} with respect to time and combining the resultant with the energy estimates
\begin{align}
 \sup_{0 \leq t \leq T} \left(E_{\sigma,\delta}(\bu_\sigma(t),\rho_\sigma(t)) + C_* \right) \leq 
 \left(E_{\sigma,\delta}(\bu_0,\rho_{0,\delta}) + C_* \right) e^{2\omega\delta T}, 
 \notag 
\end{align}
and 
\begin{align}
	\nonumber
	& \sup_{0 \leq t \leq T} \left(E_{\sigma,\delta}(\bu_\sigma(t),\rho_\sigma(t)) \right)
	+ \int_0^T \int_{\bbt^3} \rho_\sigma \absm{\bbd \bu_\sigma}^2 \,\dxdt
	+ \frac{1}{\alpha^2} \int_0^T \int_{\bbt^3} \absm{\nabla \Delta^{-1} \Div \bu_\sigma}^2 \,\dxdt \\
	\nonumber
	&\qquad  + \delta \int_0^T \int_{\bbt^3} \absm{\Delta \bu_\sigma}^2 \,\dxdt
	+ \delta^2 \int_0^T \int_{\bbt^3} \absm{\nabla\Delta \rho_\sigma}^2 \,\dxdt
	+ \delta \zeta^2 \int_0^T \int_{\bbt^3} \absm{\Delta \rho_\sigma}^2 \,\dxdt 
    \\
	& \quad \leq  E_{\sigma,\delta}(\bu_{0},\rho_{0,\delta})
    + \delta \omega \zeta^2 \int_0^T\int_{\mathbb{T}^3} |\nabla \rho_\sigma|^2\,\dxdt,     
	\notag 
    \end{align}
we can deduce that 
\begin{align}
&\sup_{0 \leq t \leq T} \left(E^{\sigma}_{\mathrm{BD}}(\bu_\sigma(t),\rho_\sigma(t))- \frac{1}{\alpha^2} \int_{\bbt^3} \ln \rho_\sigma(t) \,\dx\right)
+ \delta \int_0^T \int_{\bbt^3} \absm{\Delta \bu_\sigma}^2 \,\dxdt
\notag\\
&\qquad +  \frac{1}{\alpha^2} \int_0^T \int_{\bbt^3} \absm{\nabla \Delta^{-1} \Div \bu_\sigma}^2 \,\dxdt +
\frac{1}{4} \int_0^T\int_{\bbt^3} \rho_\sigma \absm{\nabla \bu_\sigma - (\nabla \bu_\sigma)^\top}^2 \,\dxdt
 \notag\\
&\qquad 
    + (1+\delta) \zeta^2 \int_0^T \int_{\bbt^3} \absm{\Delta \rho_\sigma}^2 \,\dxdt  
    + (\delta+\delta^2) \int_0^T \int_{\bbt^3} \absm{\nabla\Delta \rho_\sigma}^2 \,\dxdt 
    \notag \\
&\qquad + \delta \int_0^T\int_{\mathbb{T}^3}  \frac{|\Delta\rho_\sigma|^2}{\rho_\sigma}\,\dxdt 
+ \delta \int_0^T \int_{\bbt^3} W_\delta''(\rho_\sigma) \abs{\nabla \rho_\sigma}^{2} \,\dx\dt
\notag \\
&\quad \leq E^{\sigma}_{\mathrm{BD}}(\bu_0,\rho_{0,\delta}) - \frac{1}{\alpha^2} \int_{\bbt^3} \ln \rho_{0,\delta} \,\dx 
\notag \\
& \qquad 
\underbrace{-\delta \int_0^T \int_{\bbt^3} (\nabla \rho_\sigma \cdot \nabla \bu_\sigma) \cdot \frac{\nabla \rho_\sigma}{\rho_\sigma} \,\dx\dt}_{=:R_1}
	+ \underbrace{\frac{\delta}{2} \int_0^T \int_{\bbt^3} \Delta \rho_\sigma \frac{\absm{\nabla \rho_\sigma}^2}{ \rho_\sigma^2} \,\dx\dt}_{=:R_2} 
    \notag \\
	\nonumber
	& \qquad 
	 \underbrace{-\delta \int_0^T \int_{\bbt^3} \frac{\Delta \rho_\sigma}{\rho_\sigma} \Div(\rho_\delta \bu_\sigma) \,\dx\dt}_{=:R_3}
	 \underbrace{-\delta \int_0^T \int_{\bbt^3} \Delta \bu_\sigma \cdot \nabla \Delta \ln \rho_\sigma \,\dx\dt}_{=:R_4} \\
    &\qquad + \underbrace{(1+\delta) \omega \zeta^2 \int_0^T\int_{\mathbb{T}^3} |\nabla \rho_\sigma|^2\,\dxdt}_{=:R_5}
    \underbrace{- \frac{\delta}{\alpha^2} \int_0^T\int_{\bbt^3} \frac{\Delta \rho_\sigma}{\rho_\sigma} \,\dxdt}_{=:R_6}. 
    \label{eqs:BD-sigma-0}
\end{align}
Here for $R_5$, we have used the fact that $\widetilde{F}_\sigma''\geq -\omega\zeta^2$. 

In the following, we estimate the terms on the right-hand side of \eqref{eqs:BD-sigma-0}. Using \eqref{UL-rho-sigma}, H\"older's inequality, Young's inequality, the Gagliardo--Nirenberg inequality
$$
\normm{\nabla \rho_\sigma}_{L^4(\bbt^3)} \leq \normm{ \rho_\sigma}_{L^\infty(\bbt^3)}^\frac{5}{6} \normm{ \rho_\sigma}_{H^3(\bbt^3)}^\frac{1}{6}, 
$$
and the elliptic estimate 
$$
\|\rho_\sigma\|_{H^3(\mathbb{T}^3)}
\leq C(\|\nabla \Delta \rho_\sigma\|_{L^2(\mathbb{T}^3)} +\|\rho_\sigma\|_{L^2(\mathbb{T}^3)}),
$$
we find that   
\begin{align}
	\absm{R_1} & \leq \frac{2\delta}{\underline{\rho}} \int_0^T \normm{\nabla \bu_\sigma}_{L^2(\mathbb{T}^3)} \norm{\nabla \rho_\sigma}_{L^4(\mathbb{T}^3)}^2 \,\dt\nonumber\\
    & \leq \frac{\delta}{8}\int_0^T \normm{\nabla\Delta \rho_\sigma }_{L^2(\mathbb{T}^3)}^2\,\dt +  C(T,\underline{\rho}, E_{\sigma,\delta}(\bu_0,\rho_{0,\delta}),C_*).
    \notag 
\end{align}
In a similar way, we get 
\begin{align}
	\absm{R_2} & \leq \frac{2\delta }
   {\underline{\rho}^2}
    \int_0^T \normm{\Delta \rho_\sigma}_{L^2(\mathbb{T}^3)} \norm{\nabla \rho_\sigma}_{L^4(\mathbb{T}^3)}^2\,\dt\nonumber\\
	& \leq  \frac{\delta}{8}\int_0^T \normm{\nabla\Delta \rho_\sigma }_{L^2(\mathbb{T}^3)}^2\,\dt 
    + \frac{\zeta^2}{4} \int_0^T \normm{\Delta \rho_\sigma}_{L^2(\mathbb{T}^3)}^2 \,\dt + C(\underline{\rho},\zeta)T.\notag 
\end{align}
In the above estimates for $R_1$, $R_2$, we have used the fact that $\delta\in (0,1)$. Next, employing the following interpolation inequalities,
\begin{align*}
  &\normm{\rho_\sigma}_{H^2(\bbt^3)}  \leq \normm{\rho_\sigma}_{L^\infty(\bbt^3)}^\frac23 \normm{\rho_\sigma}_{H^3(\bbt^3)}^\frac13,
  \\
  &\normm{\rho_\sigma}_{W^{2,3}(\bbt^3)}  \leq \normm{\rho_\sigma}_{L^\infty(\bbt^3)}^\frac13 \normm{\rho_\sigma}_{H^3(\bbt^3)}^\frac23,
\end{align*}
we can deduce that 
\begin{align}
	\absm{R_3} 
	& \leq \abs{\delta \int_0^T \int_{\bbt^3} \frac{\Delta \rho_\sigma}{\rho_\sigma} (\bu_\sigma \cdot \nabla \rho_\sigma )\,\dx\dt}
	+ \abs{\delta \int_0^T \int_{\bbt^3} \Delta \rho_\sigma \Div \bu_\sigma  \,\dx\dt} \nonumber
    \\
	& \leq \frac{2\delta}{\underline{\rho}}\int_0^T \normm{\bu_\sigma}_{L^4(\mathbb{T}^3)} \normm{\nabla\rho_\sigma}_{L^4(\mathbb{T}^3)} \normm{\Delta \rho_\sigma}_{L^2(\mathbb{T}^3)} \,\dt 
	+ \delta\int_0^T \normm{\Div \bu_\sigma}_{L^2(\mathbb{T}^3)}\normm{\Delta \rho_\sigma}_{L^2(\mathbb{T}^3)}\,\dt \nonumber
    \\
	& \leq C(\underline{\rho}) \delta \int_0^T \normm{\bu_\sigma}_{L^2(\mathbb{T}^3)}^\frac58 \normm{\bu_\sigma}_{H^2(\mathbb{T}^3)}^\frac38 \normm{\rho_\sigma}_{L^\infty(\mathbb{T}^3)}^\frac{3}{2} \normm{ \rho_\sigma}_{H^3(\mathbb{T}^3)}^\frac{1}{2} \,\dt
    \notag\\
    &\quad 
	+ C \delta\int_0^T \normm{\nabla  \bu_\sigma}_{L^2(\mathbb{T}^3)} \normm{\Delta \rho_\sigma}_{L^2(\mathbb{T}^3)}\,\dt 
    \nonumber\\
	& \leq \frac{\delta}{4} \int_0^T \normm{\Delta \bu_\sigma}_{L^2(\mathbb{T}^3)}^2\, \dt 
	+\frac{\zeta^2}{4} \int_0^T \normm{\Delta \rho_\sigma}_{L^2(\mathbb{T}^3)}^2 \,\dt + \frac{\delta}{8}\int_0^T \normm{\nabla\Delta \rho_\sigma }_{L^2(\mathbb{T}^3)}^2\,\dt  
    \notag \\
	&\quad + C(T,\underline{\rho}, E_{\sigma,\delta}(\bu_0,\rho_{0,\delta}),C_*),
    \notag 
\end{align}
and 
\begin{align}
	\absm{R_4} 
	& \leq \delta  \int_0^T \int_{\bbt^3} \|\Delta \bu_\sigma\|_{L^2(\mathbb{T}^3)} 
    \left\|
    \frac{\nabla\Delta \rho_\sigma}{\rho_\sigma}
    - \frac{\Delta\rho_\sigma\nabla \rho_\sigma}{\rho_\sigma^2}-\frac{\nabla |\nabla \rho_\sigma|^2}{\rho_\sigma^2}+ \frac{2|\nabla \rho_\sigma|^2\nabla \rho_\sigma}{\rho_\sigma^3}\right\|_{L^2(\mathbb{T}^3)}\,\dx\dt
	\notag \\
    &
    \leq C(\underline{\rho}) \delta \int_0^T \int_{\bbt^3} \|\Delta \bu_\sigma\|_{L^2(\mathbb{T}^3)} 
    \left(
    \|\nabla\Delta \rho_\sigma\|_{L^2(\mathbb{T}^3)}
    + \|\nabla \rho_\sigma\|^3_{L^6(\mathbb{T}^3)}\right)\,\dx\dt\notag 
    \\
     &
    \quad + C(\underline{\rho}) \delta \int_0^T \int_{\bbt^3} \|\Delta \bu_\sigma\|_{L^2(\mathbb{T}^3)} 
    \|\rho_\sigma\|_{W^{2,3}(\mathbb{T}^3)}\|\nabla \rho_\sigma\|_{L^6(\mathbb{T}^3)} \,\dx\dt\notag 
    \\ 
    &	\leq \frac{\delta}{4} \int_0^T \normm{\Delta \bu_\sigma}_{L^2(\mathbb{T}^3)}^2\, \dt 
    + \frac{\delta}{8}\int_0^T \normm{\nabla\Delta \rho_\sigma }_{L^2(\mathbb{T}^3)}^2\,\dt 
	 + C(T,\underline{\rho}, E_{\sigma,\delta}(\bu_0,\rho_{0,\delta}),C_*).
    \notag 
\end{align}
The term $R_5$ can easily be estimated by  
\begin{align}
 |R_5|\leq  C(T,\omega, E_{\sigma,\delta}(\bu_0,\rho_{0,\delta}),C_*),
 \notag 
\end{align}
while for $R_6$, we have 
\begin{align*}
	\abs{R_6}
	\leq \frac{\delta}{\alpha^2} \left(\int_{\bbt^3} \frac{\absm{\Delta \rho_\sigma}^2}{\rho_\sigma} \,\dx\right)^{\onehalf} \left\|\rho_\delta^{-\frac12}\right\|_{L^2(\bbt^3)}
	\leq \frac{\delta}{2} \int_{\bbt^3} \frac{\absm{\Delta \rho_\sigma}^2}{\rho_\sigma} \,\dx 
	+ \frac{\sqrt{2}\delta}{2\alpha^4\sqrt{\underline{\rho}}}.
\end{align*}
Substituting the above estimates for $R_1$, ..., $R_6$ into \eqref{eqs:BD-sigma-0}, we obtain 
\begin{align}
&\sup_{0 \leq t \leq T}  E^{\sigma}_{\mathrm{BD}}(\bu_\sigma(t),\rho_\sigma(t)) 
+ \frac{\delta}{2} \int_0^T \int_{\bbt^3} \absm{\Delta \bu_\sigma}^2 \,\dxdt  
+  \frac{1}{\alpha^2} \int_0^T \int_{\bbt^3} \absm{\nabla \Delta^{-1} \Div \bu_\sigma}^2 \,\dxdt
\notag\\
&\qquad +
\frac{1}{4} \int_0^T\int_{\bbt^3} \rho_\sigma \absm{\nabla \bu_\sigma - (\nabla \bu_\sigma)^\top}^2 \,\dxdt 
+ \frac{\zeta^2}{2} \int_0^T \int_{\bbt^3} \absm{\Delta \rho_\sigma}^2 \,\dxdt  
 \notag\\
&\qquad 
    + \frac{\delta}{2} \int_0^T \int_{\bbt^3} \absm{\nabla\Delta \rho_\sigma}^2 \,\dxdt 
     + \frac{\delta}{2} \int_0^T\int_{\mathbb{T}^3} \frac{|\Delta\rho_\sigma|^2}{\rho_\sigma}\,\dxdt  
     + \delta \int_0^T \int_{\bbt^3} W_\delta''(\rho_\sigma) \abs{\nabla \rho_\sigma}^{2} \,\dx\dt
\notag \\
&\quad \leq E^{\sigma}_{\mathrm{BD}}(\bu_0,\rho_{0,\delta}) +  C(T,\omega, \underline{\rho}, E_{\sigma,\delta}(\bu_0,\rho_{0,\delta}),C_*).  
    \label{eqs:BD-entropy-delta-0}
\end{align}
By the construction of $\rho_{0,\delta}$ and $\widetilde{F}_\sigma$, the right-hand side of \eqref{eqs:BD-entropy-delta-0} is bounded by a positive constant independent of $\sigma, \delta$. 

Since the solution $(\mathbf{u}_\delta,\rho_\delta)$ is the limit of a sequence of approximate solutions $(\bu_\sigma,\rho_\sigma)$ constructed in Proposition \ref{prop:weak-solution-sigma}, after passing to the limit as $\sigma \to 0$ in \eqref{eqs:BD-entropy-delta-0}, we arrive at the conclusion \eqref{eqs:BD-entropy-delta-1}. 
\end{proof}

Using the construction of $\rho_{0,\delta}$ again (especially \eqref{eqs:rho0-bounds} and \eqref{eqs:rho0-convergence}), we find that $E^{\delta}_{\mathrm{BD}}(\bu_0,\rho_{0,\delta})$ can be bounded by a constant independent of $0<\delta\ll 1$. 
Then the BD-entropy estimate \eqref{eqs:BD-entropy-delta-1} yields that 
\begin{align}
	\label{eqs:uniform-rho}
	\rho_\delta \ \ \text{is uniformly bounded in} \ \ L^2(0,T;H^2(\bbt^3)). 
\end{align}
Applying \eqref{eqs:app-mass-sigma1}, \eqref{eqs:uniform-rho}, H\"{o}lder's inequality and the Sobolev theorem, we get   %
\begin{align}
\|\partial_t\rho_\delta\|_{L^2(\mathbb{T}^3)}
& \lesssim \|\nabla \rho_\delta\|_{L^6(\mathbb{T}^3)}\|\mathbf{u}_\delta\|_{L^3(\mathbb{T}^3)}
+\|\rho_\delta\|_{L^\infty(\mathbb{T}^3)}\|\Div \mathbf{u}_\delta\|_{L^2(\mathbb{T}^3)}
+\delta\|\Delta \rho_\delta\|_{L^2(\mathbb{T}^3)}\notag\\
&\lesssim \|\rho_\delta\|_{H^2(\mathbb{T}^3)}\|\mathbf{u}_\delta\|_{H^1(\mathbb{T}^3)}^\frac12\|\mathbf{u}_\delta\|_{L^2(\mathbb{T}^3)}^\frac12+
\|\rho_\delta\|_{L^\infty(\mathbb{T}^3)}\| \mathbf{u}_\delta\|_{H^1(\mathbb{T}^3)}+ \delta \|\Delta \rho_\delta\|_{L^2(\mathbb{T}^3)},
\notag \\
\|\partial_t\rho_\delta\|_{L^\frac32(\mathbb{T}^3)}
& \lesssim \|\nabla \rho_\delta\|_{L^3(\mathbb{T}^3)}\|\mathbf{u}_\delta\|_{L^3(\mathbb{T}^3)}
+\|\rho_\delta\|_{L^\infty(\mathbb{T}^3)}\|\Div \mathbf{u}_\delta\|_{L^2(\mathbb{T}^3)}
+\delta\|\Delta \rho_\delta\|_{L^2(\mathbb{T}^3)}\notag\\
&\lesssim \|\rho_\delta\|_{H^2(\mathbb{T}^3)}^\frac12 \|\rho_\delta\|_{H^1(\mathbb{T}^3)}^\frac12\|\mathbf{u}_\delta\|_{H^1(\mathbb{T}^3)}^\frac12\|\mathbf{u}_\delta\|_{L^2(\mathbb{T}^3)}^\frac12+
\|\rho_\delta\|_{L^\infty(\mathbb{T}^3)}\| \mathbf{u}_\delta\|_{H^1(\mathbb{T}^3)}\notag\\
&\quad + \delta \|\Delta \rho_\delta\|_{L^2(\mathbb{T}^3)}.
\notag 
\end{align}
From the above estimates, we can deduce that   
\begin{align}
\partial_t\rho_\delta\ \ \text{is uniformly bounded in} \ \ L^\frac{4}{3}(0,T;L^2(\mathbb{T}^3))\cap L^{2}(0,T;L^{\frac{3}{2}}(\bbt^3)).
\label{eqs:uniform-pt-rho}
\end{align}
Nevertheless, the estimate for $\partial_t(\rho_\delta\mathbf{u}_\delta)$ requires a suitable control of the pressure $\widetilde{P}(\rho_\delta)$.     

\subsection{Integrability of the pressure}
\label{sec:integrability-pressure-delta}
We now apply the BD-entropy estimate to derive the uniform  integrability of $\widetilde{P}(\rho_\delta)$. Compared to Lemma \ref{lem:uniform-P}, the key point is that the subsequent estimate is independent of $\delta$. 
\begin{lemma}
	\label{lem:uniform-P-delta}
	Let $(\bu_\delta,\rho_\delta)$ be an approximate solution given by Proposition \ref{prop:weak-solution-delta}. Then we have 
	\begin{align}
		\label{eqs:uniform-P-delta}
		\int_0^{T} \int_{\bbt^{3}} \abs{\widetilde{P}(\rho_\delta)}\, \dxdt \leq C, 
	\end{align}
	where $C > 0$ is independent of $\delta$.
\end{lemma}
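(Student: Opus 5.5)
We will follow the strategy of Lemma \ref{lem:uniform-P}, now tracking the $\delta$-dependence and using the BD-entropy bound \eqref{eqs:BD-entropy-delta-1} (which gives \eqref{eqs:uniform-rho}) in place of the $\delta$-weighted higher-order bounds. Concretely, we take $\bphi=\psi(t)\cB[\rho_\delta^{(0)}]$ with $\rho_\delta^{(0)}=\rho_\delta-\mean{\rho_\delta}$ and $\psi\in C_c^\infty((0,T))$ in \eqref{eqs:weak-formulation-delta}, where the term $\rho\nabla\Delta^2\rho$ is understood as in \eqref{higher-terms}. This isolates $\int_0^T\psi\int_{\bbt^3}\widetilde{P}(\rho_\delta)\rho_\delta^{(0)}\,\dxdt$ on one side. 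On the other side we get the terms $I_1,I_2,I_3$ as before, now with the limit functions $W,H$ in place of $W_\delta,H_\delta$.

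\textbf{Lower-order terms.} The time-derivative terms are handled as before. For $I_1$ we use $\rho_\delta\bu_\delta\in L^\infty(0,T;L^2)$ and $\cB[\rho_\delta^{(0)}]\in L^\infty(0,T;H^1)$, which follows from \eqref{eqs:uniform-rho-del}. For $I_2$ we use \eqref{eqs:app-mass-sigma1} to write $\pt\rho_\delta^{(0)}=-\Div(\rho_\delta\bu_\delta)+\delta\Delta\rho_\delta$, and then apply \eqref{eqs:Bogovskii-Lp-div}. Both bounds are independent of $\delta$.

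\textbf{Remaining terms.} The convective, viscous, damping and $\delta H(\rho_\delta)$ terms are bounded by \eqref{eqs:uniform-u-del}, since $\rho_\delta\in(\underline\rho,1)$ and $H$ is bounded on $[\underline\rho,1]$. The Korteweg terms $\zeta^2\rho_\delta\Delta\rho_\delta\rho_\delta^{(0)}$ and $\Delta\rho_\delta\nabla\rho_\delta\cdot\cB[\rho_\delta^{(0)}]$ are controlled by $\|\Delta\rho_\delta\|_{L^2(Q_T)}$ from \eqref{eqs:uniform-rho}; this is the new input, since previously we only had $\delta^{1/2}\Delta\rho_\delta\in L^2$. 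The $\delta$-terms each carry a factor $\delta$:
\begin{itemize}
\item[--] $\delta\Delta\bu_\delta\cdot\Delta\cB[\rho_\delta^{(0)}]$ is bounded by $\delta^{1/2}\|\delta^{1/2}\Delta\bu_\delta\|_{L^2}\|\rho_\delta\|_{L^2H^1}$.
\item[--] $\delta(\nabla\rho_\delta\cdot\nabla\bu_\delta)\cdot\cB[\cdot]$ is elementary.
\item[--] The four pieces of \eqref{higher-terms} involve $\delta\nabla\Delta\rho_\delta$ paired with $\nabla^2\rho_\delta$, $\nabla\rho_\delta\otimes\nabla\cB[\cdot]$, $\nabla\rho_\delta\,\rho_\delta^{(0)}$ and $\rho_\delta\nabla\rho_\delta$. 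These are bounded using $\delta^{1/2}\nabla\Delta\rho_\delta\in L^2(Q_T)$ from \eqref{eqs:BD-entropy-delta-1}, together with $\nabla\rho_\delta\in L^\infty L^2\cap L^2L^6$ and $\nabla^2\rho_\delta\in L^2L^2$. For example, $\delta\int|\nabla^2\rho_\delta||\nabla\Delta\rho_\delta||\cB[\rho_\delta^{(0)}]|$ is at most $\delta^{1/2}\|\delta^{1/2}\nabla\Delta\rho_\delta\|_{L^2}\|\rho_\delta\|_{L^2H^2}\|\cB\|_{L^\infty L^\infty}$, with $\cB[\rho_\delta^{(0)}]\in L^\infty(0,T;W^{1,p})$ for all $p<\infty$ since $\rho_\delta$ is bounded.
\end{itemize}
Checking that every such term closes with the $\delta$-uniform bounds is the main bookkeeping obstacle. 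The most delicate terms are those with $\nabla\Delta\rho_\delta$, where one must not lose the $\delta^{1/2}$ factor. The crucial point is that the BD-entropy supplies $\delta\|\nabla\Delta\rho_\delta\|^2_{L^2(Q_T)}$ uniformly, not merely $\delta^2\|\nabla\Delta\rho_\delta\|^2_{L^2(Q_T)}$.

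\textbf{Conclusion.} This yields $\big|\int_0^T\psi\int_{\bbt^3}\widetilde{P}(\rho_\delta)\rho_\delta^{(0)}\big|\le C(\|\psi\|_{L^\infty}+\|\psi'\|_{L^1})$. Choosing $\psi=\psi_m$ and letting $m\to\infty$ removes $\psi$. We then repeat the sign decomposition $J_1+J_2+J_3$ of Lemma \ref{lem:uniform-P}, using that $\rho_*$ is fixed, that $\tm_\delta=\mean{\rho_{0,\delta}}=\mean{\rho_0}\in(\underline\rho,1)$ is independent of $\delta$ (so $a$ can be chosen uniformly), and that $\widetilde{P}=\widetilde{P}_c+$ a bounded quadratic correction. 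This gives \eqref{eqs:uniform-P-delta}.
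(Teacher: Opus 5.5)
Your proposal is correct and follows the paper's proof essentially step by step. You test \eqref{eqs:weak-formulation-delta} with $\psi(t)\cB[\rho_\delta^{(0)}]$, bound every term uniformly in $\delta$ using the BD-entropy bound \eqref{eqs:uniform-rho} (for the Korteweg terms and for the $\nabla^2\rho_\delta$ factor in the higher-order terms), remove $\psi$ via $\psi_m$, and repeat the sign decomposition of Lemma~\ref{lem:uniform-P}. Two of your remarks make explicit what the paper leaves implicit: that the mean of $\rho_{0,\delta}$ equals that of $\rho_0$, and that $\widetilde{P}-\widetilde{P}_c$ is bounded on $[\underline{\rho},1]$.

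One correction of emphasis. For the $\nabla\Delta\rho_\delta$ terms, the energy bound $\delta\nabla\Delta\rho_\delta\in L^2(Q_T)$ from \eqref{eqs:uniform-rho-del} already suffices once $\nabla^2\rho_\delta\in L^2(Q_T)$ is known, and this is what the paper uses. So the $\delta^{1/2}$-weighted BD bound is convenient here but not crucial. It becomes indispensable only in the equi-integrability Lemma~\ref{lem:uniform-P-chi}, for the term $K_9$.
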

\begin{proof}
	Choosing the test function $\bphi$ in \eqref{eqs:weak-formulation-delta} as 
	\begin{align*}
		\bphi=\psi(t) \cB\big[\rho_{\delta}^{(0)}\big], \quad \psi \in C_c^\infty((0,T))\quad \text{and}\quad \rho_{\delta}^{(0)} \coloneqq \zeromean{\rho_\delta},
	\end{align*}
	we obtain 
	\begin{align*}
		&\int_0^T \psi(t) \int_{\mathbb{T}^{3}} \widetilde{P}(\rho_\delta)  \rho_{\delta}^{(0)} \,\dxdt 
        \\ 
        & = - \int_0^{T} \psi'(t) \int_{\mathbb{T}^{3}} \rho_\delta \bu_\delta \cdot \cB \big[\rho_{\delta}^{(0)}\big]\,\dxdt 
        - \int_0^{T} \psi(t) \int_{\mathbb{T}^{3}} \rho_\delta \bu_\delta \cdot \cB \big[\pt\rho_{\delta}^{(0)}\big]\,\dxdt 
        \\
		& \quad  
        - \int_0^{T} \psi(t) \int_{\mathbb{T}^{3}} \rho_\delta (\bu_\delta \otimes \bu_\delta) : \nabla \cB \big[\rho_{\delta}^{(0)}\big]\,\dxdt 
        + \int_0^{T} \psi(t) \int_{\mathbb{T}^{3}} \rho_\delta \bbd \bu_\delta : \nabla \cB \big[\rho_{\delta}^{(0)}\big]\,\dxdt 
        \\
		 & \quad -  \delta \int_0^{T} \psi(t) \int_{\mathbb{T}^{3}} H(\rho_\delta)  \rho_{\delta}^{(0)} \,\dxdt 
         +\delta \int_0^{T} \psi(t) \int_{\mathbb{T}^{3}}   \nabla^2\rho_{\delta}: \nabla\Delta\rho_{\delta} \otimes\cB \big[\rho_\delta^{(0)}\big]\,\dxdt
         \\
      	&\quad +\delta \int_0^{T} \psi(t) \int_{\mathbb{T}^{3}}   \nabla\Delta\rho_{\delta} \otimes\nabla\rho_{\delta} :\nabla \cB \big[\rho_\delta^{(0)}\big]\,\dxdt
        +\delta\int_0^{T} \psi(t) \int_{\mathbb{T}^{3}}   (\nabla\rho_{\delta} \cdot\nabla\Delta\rho_{\delta})  \rho_\delta^{(0)}\,\dxdt
       \\
       &\quad + \delta \int_0^{T} \psi(t) \int_{\mathbb{T}^{3}}   \rho_{\delta} \nabla\Delta\rho_{\delta} \cdot \nabla\rho_\delta^{(0)}\,\dxdt 
        +\delta\int_0^{T} \psi(t) \int_{\mathbb{T}^{3}}   (\nabla\rho_{\delta} \cdot\nabla \bu_\delta) \cdot \cB \big[\rho_\delta^{(0)}\big]\,\dxdt
      \\
      &\quad 
         +\delta \int_0^{T} \psi(t) \int_{\mathbb{T}^{3}} \Delta \bu_\delta  \cdot \Delta\cB \big[\rho_{\delta}^{(0)}\big]\,\dxdt 
         + \zeta^2  \int_0^{T} \psi(t) \int_{\mathbb{T}^{3}}  \rho_\delta \Delta \rho_\delta  \rho_{\delta}^{(0)} \,\dxdt 
         \\
		 & \quad  + \zeta^2 \int_0^{T} \psi(t) \int_{\mathbb{T}^{3}}  \Delta \rho_\delta \nabla \rho_\delta \cdot \cB  \big[\rho_{\delta}^{(0)}\big] \,\dxdt  
          - \frac{1}{\alpha^2}\int_0^{T} \psi(t) \int_{\mathbb{T}^{3}}  \Delta^{-1} \Div \bu_\delta  \rho_{\delta}^{(0)} \,\dxdt. 
	\end{align*}
Combining the estimates \eqref{eqs:uniform-u-del}, \eqref{eqs:uniform-rho-del} and \eqref{eqs:uniform-rho}, we can conclude 
\begin{align}
		\label{eqs:P-rho-delta}
		\abs{\int_0^T \psi(t)\int_{\mathbb{T}^{3}} \widetilde{P}_\delta(\rho_{\delta}) \rho_\delta^{(0)}\,\dxdt} 
        \leq C(\|\psi\|_{L^{\infty}(0,T)}+\|\psi'\|_{L^{1}(0,T)}),
	\end{align} 
where $C>0$ is independent of $0<\delta\ll 1$. Here we only mention estimates that are essentially based on the BD-entropy estimate  (more precisely, \eqref{eqs:uniform-rho}):  
\begin{align*}
 & \delta \left|\int_0^{T} \psi(t) \int_{\mathbb{T}^{3}}   \nabla^2\rho_{\delta}:\nabla\Delta\rho_{\delta} \otimes\cB \big[\rho_\delta^{(0)}\big]\,\dxdt \right|  \\
 &\quad \leq \|\psi\|_{L^\infty(0,T)}\norm{\cB \big[\rho_\delta^{(0)}\big]}_{L^\infty(0,T;L^\infty(\mathbb{T}^3))}  \|\nabla^2\rho_{\delta}\|_{L^2(0,T;L^2(\mathbb{T}^3))}
  \left(\delta \|\nabla\Delta\rho_{\delta}\|_{L^2(0,T;L^2(\mathbb{T}^3))}
 \right),
\end{align*}
\begin{align*}
& \zeta^2 \left| \int_0^{T} \psi(t) \int_{\mathbb{T}^{3}}  \Delta \rho_\delta \nabla \rho_\delta \cdot \cB  \big[\rho_{\delta}^{(0)}\big] \,\dxdt\right|
  \\
&\quad \leq \zeta^2 \|\psi\|_{L^\infty(0,T)}\norm{\cB \big[\rho_\delta^{(0)}\big]}_{L^\infty(0,T;L^\infty(\mathbb{T}^3))}  \|\Delta\rho_{\delta}\|_{L^2(0,T;L^2(\mathbb{T}^3))}
   \|\nabla \rho_{\delta}\|_{L^2(0,T;L^2(\mathbb{T}^3))}.
\end{align*}
Applying an argument similar to that used for Lemma \ref{lem:uniform-P}, we can conclude \eqref{eqs:uniform-P-delta} from \eqref{eqs:P-rho-delta}. The details are omitted for the sake of brevity. 
\end{proof}

\subsection{Equi-integrability of the pressure}
\label{sec:equi-integrability}
Next, we apply the BD-entropy estimate to establish the equi-integrability of $\{\widetilde{P}(\rho_\delta)\}$ for $0< \delta \ll 1$. For this purpose, let us introduce the function 
\begin{align}\notag 
    \chi(r) = \ln \left(\frac{r - \underline{\rho}}{1 - r}\right),\ \quad  r\in\big(\underline{\rho},1\big)
\end{align}
and consider the following test function
\begin{align*}
	\bw_\delta(x,t) 
	=\psi(t) \cB\left[\zeromean{\chi(\rho_\delta)}\right]=:\psi(t)\cB\big[\chi^{(0)}_{\delta}\big],\quad \psi \in C_c^\infty((0,T)).
\end{align*}
In view of the bound \eqref{rho-UpLo-delta} for $\rho_\delta$, here we do not need a truncation like that for $\chi_\sigma$ (cf. \eqref{cut function-1}). According to Assumption \ref{main assumption}, it follows that 
\begin{align*}
	\abs{\chi(\rho_\delta(x,t))}^{p} & \leq C \abs{\widetilde{F} (\rho_\delta(x,t))} + C, \quad \text{ for all }\ 1 \leq p < \infty, \\
	\abs{\chi' (\rho_\delta(x,t))}^{\beta} & \leq C\abs{\widetilde{F} (\rho_\delta(x,t))} + C, \\
	\abs{\chi' (\rho_\delta(x,t))}^{\beta+1} & \leq C \abs{\widetilde{P} (\rho_\delta(x,t))} + C.
\end{align*}
These estimates, together with \eqref{eqs:uniform-FWH-del} and \eqref{eqs:uniform-P-delta} imply that 
\begin{alignat}{3}
	\label{sigular1-delta}
	\chi(\rho_\delta)  & \text{ is uniformly bounded in } && L^\infty(0,T;L^p(\mathbb{T}^{3})), \quad \text{for all }\ 1 \leq p < \infty, \\
	\label{sigular2-delta}
	\chi'(\rho_\delta) & \text{ is uniformly bounded in } && L^\infty(0,T;L^\beta(\mathbb{T}^{3})) \cap  L^{\beta+1}(Q_T),\\
    \label{B-bound-chi} \cB\big[\chi^{(0)}_\delta\big]  
    & \text{ is uniformly bounded in } && L^{\infty}(0,T;W^{1,p}(\bbt^3)),\text{ for all $1\leq p<\infty$}.
\end{alignat}
In addition, the following estimate holds    
\begin{align}
\big\|\sqrt{\delta}\nabla\chi^{(0)}_\delta\big\|_{L^{2}(0,T;L^{2}(\mathbb{T}^{3}))}^2
&\leq\|\sqrt{\delta}\nabla\rho_\delta\|_{L^{4}(0,T;L^{\infty}(\mathbb{T}^{3}))}^2
\|\chi'(\rho_\delta)\|_{L^{4}(0,T;L^{2}(\mathbb{T}^{3}))}^2
\notag \\
&\leq\|\sqrt{\delta}\nabla\rho_\delta\|_{L^{\infty}(0,T;H^{1}(\mathbb{T}^{3}))}\|\sqrt{\delta}\nabla\rho_\delta\|_{L^{2}(0,T;H^{2}(\mathbb{T}^{3}))} 
\notag \\
&\quad\times \|\chi'(\rho_\delta)\|_{L^{\infty}(0,T;L^{\frac{3}{2}}(\mathbb{T}^{3}))}^{\frac{3}{4}}\|\chi'(\rho_\delta)\|_{L^{\frac{5}{2}}(0,T;L^{\frac{5}{2}}(\mathbb{T}^{3}))}^{\frac{5}{4}},
\label{B-bound-chi-n1}
\end{align}
where we have used \eqref{eqs:BD-entropy-delta-1}, \eqref{sigular2-delta}, Agmon's inequality (in three dimensions) and the assumption $\beta\geq 3/2$. The estimate \eqref{B-bound-chi-n1} also implies 
\begin{alignat}{3}
\label{B-bound-chi-1}
\sqrt{\delta}\cB\big[\chi^{(0)}_\delta\big]  & \text{ is uniformly bounded in }  && L^{2}(0,T;H^{2}(\bbt^3)).
\end{alignat}

Now we prove the equi-integrability of the pressure.
\begin{lemma}
	\label{lem:uniform-P-chi}
	Let  $(\bu_\delta,\rho_\delta)$ be an approximate solution given by Proposition \ref{prop:weak-solution-delta}. Then we have 
	\begin{align}
		\label{eqs:uniform-P-chi}
		\int_0^{T} \int_{\bbt^{3}} \abs{\widetilde{P}(\rho_\delta) \chi(\rho_\delta)} \,\dxdt \leq C,
	\end{align}
	where $C > 0$ is independent of $\delta$.
\end{lemma}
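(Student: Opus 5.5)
We follow the scheme of Lemma \ref{lem:uniform-P-chi-sigma}, now with all bounds independent of $\delta$. The extra $\delta$-uniform regularity comes from the BD-entropy estimate \eqref{eqs:BD-entropy-delta-1}, from \eqref{eqs:uniform-rho}, and from \eqref{sigular1-delta}--\eqref{B-bound-chi-1}.

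\textbf{Testing the momentum equation.} We take $\bphi=\bw_\delta=\psi(t)\cB[\chi^{(0)}_\delta]$ in \eqref{eqs:weak-formulation-delta}, using \eqref{higher-terms} for the sixth-order term. This isolates $\int_0^T\psi\int_{\bbt^3}\widetilde P(\rho_\delta)\chi^{(0)}_\delta\,\dxdt$ on one side, and the remaining terms are bounded as follows.
\begin{itemize}
\item \emph{Time-derivative term, $\psi'$ part.} It is bounded by \eqref{eqs:uniform-u-del} and \eqref{B-bound-chi}.
\item \emph{Time-derivative term, $\psi$ part.} It involves $\cB[\pt\chi(\rho_\delta)]$ with $\pt\chi(\rho_\delta)=\chi'(\rho_\delta)\pt\rho_\delta$. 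By \eqref{eqs:app-mass-sigma1},
\[
\pt\chi(\rho_\delta)=-\Div(\chi(\rho_\delta)\bu_\delta)+(\chi(\rho_\delta)-\rho_\delta\chi'(\rho_\delta))\Div\bu_\delta+\delta\chi'(\rho_\delta)\Delta\rho_\delta .
\]
The divergence part is handled with \eqref{eqs:Bogovskii-Lp-div}, and $\chi(\rho_\delta)\bu_\delta$ lies in $L^2(0,T;L^{q})$ for some $q>2$ by \eqref{sigular1-delta} and $\bu_\delta\in L^2H^1$. The term with $\rho_\delta\chi'(\rho_\delta)\Div\bu_\delta$ is delicate. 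We estimate it in $W^{-1,p}$ via \eqref{eqs:Bogovskii-Lp}, using $\chi'(\rho_\delta)\in L^\infty L^\beta$ with $\beta\ge 3/2$ and $\Div\bu_\delta\in L^2L^2$. The product lies in $L^2L^{6/7}$, which embeds into $H^{-1}$-type spaces ($L^{6/7}\subset W^{-1,2}$ in 3D since $6/7>6/5$ fails). So in practice we instead write $\cB$ acting to $W^{1,r}$ with $r<3/2\cdot$, and pair with $\rho_\delta\bu_\delta\in L^\infty L^2\cap L^2L^6$. We fix the exponents by interpolation, which is where $\beta\ge 3/2$ enters.
\item \emph{The $\delta\chi'\Delta\rho_\delta$ term.} Integrating by parts, it becomes $\delta\nabla\rho_\delta\cdot\nabla\chi(\rho_\delta)$-type expressions. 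These are controlled by $\sqrt\delta\nabla\rho_\delta$ bounds from \eqref{eqs:BD-entropy-delta-1} together with \eqref{B-bound-chi-n1}.
\item \emph{Convective and viscous terms.} They use $\nabla\cB[\chi^{(0)}_\delta]\in L^\infty L^p$ for all $p<\infty$.
\item \emph{$\delta$-regularizing terms.} We distribute $\delta=\sqrt\delta\sqrt\delta$. For instance,
\[
\delta\int\rho_\delta\nabla\Delta\rho_\delta\cdot\nabla\chi^{(0)}_\delta
\]
is bounded by $\|\sqrt\delta\nabla\Delta\rho_\delta\|_{L^2L^2}\|\sqrt\delta\nabla\chi^{(0)}_\delta\|_{L^2L^2}$ using \eqref{B-bound-chi-n1}. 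Similarly, $\delta\Delta\bu_\delta\cdot\Delta\cB[\chi^{(0)}_\delta]$ is bounded via \eqref{B-bound-chi-1}. The term $\delta H(\rho_\delta)\chi^{(0)}_\delta$ is trivial since $H$ is bounded on $[\underline\rho,1]$.
\item \emph{Capillary terms.} The $\zeta^2$ terms use $\Delta\rho_\delta\in L^2L^2$ from \eqref{eqs:uniform-rho} and $\chi\in L^\infty L^p$.
\end{itemize}
This yields
\[
\Big|\int\psi\int\widetilde P(\rho_\delta)\chi^{(0)}_\delta\Big|\le C\big(\|\psi\|_{L^\infty(0,T)}+\|\psi'\|_{L^1(0,T)}\big),
\]
and letting $\psi=\psi_m\to1$ gives the bound on $\int_{Q_T}\widetilde P(\rho_\delta)\chi^{(0)}_\delta$.

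\textbf{Sign argument.} Using \eqref{eqs:uniform-P-delta}, we replace $\widetilde P$ by $\widetilde P_c$, since the quadratic part is bounded. By \eqref{sigular1-delta} with $p=1$, the mean $|\mean{\chi(\rho_\delta)}|\le\widehat m$ uniformly. Choose $\widehat\sigma$ such that $\widetilde P_c>0$ and $\chi\ge2\widehat m$ on $[1-\widehat\sigma,1)$, and symmetrically near $\underline\rho$. On these sets,
\[
\widetilde P_c\,\chi^{(0)}_\delta\ge\tfrac12|\widetilde P_c\,\chi|\ge0 .
\]
On the middle region, both $\widetilde P_c$ and $\chi$ are bounded. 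Hence $\int|\widetilde P_c\chi|$ over the tails is bounded, and \eqref{eqs:uniform-P-chi} follows.

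The main obstacle is the $\cB[\pt\chi^{(0)}_\delta]$ term. Its $\chi'(\rho_\delta)\Div\bu_\delta$ and $\delta\chi'\Delta\rho_\delta$ pieces have no $\delta$-weight to spare, and closing them requires exploiting the integrability of $\chi'$ granted by $\beta\ge3/2$, together with \eqref{B-bound-chi-n1}.
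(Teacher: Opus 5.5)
Your architecture matches the paper's. You test \eqref{eqs:weak-formulation-delta} with $\psi\cB[\chi^{(0)}_\delta]$, expand $\pt\chi(\rho_\delta)$ through \eqref{eqs:app-mass-sigma1}, use \eqref{eqs:uniform-rho}, \eqref{B-bound-chi-n1} and \eqref{B-bound-chi-1} for the $\delta$-terms, and finish with the sign argument of Lemma \ref{lem:uniform-P-chi-sigma}. Those parts are fine.

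\textbf{The main obstacle is never closed.} The estimate you single out as the main obstacle is never actually established, and the ingredient you name for it cannot work. At $\beta=3/2$ you only use $\chi'(\rho_\delta)\in L^\infty(0,T;L^{3/2}(\bbt^3))$ and $\Div\bu_\delta\in L^2(Q_T)$. Then the product $\rho_\delta\chi'(\rho_\delta)\Div\bu_\delta$ has spatial integrability $L^{6/7}$, which is below $L^1$. So \eqref{eqs:Bogovskii-Lp} cannot even be applied, and no choice of exponents for $\rho_\delta\bu_\delta$ repairs this.

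The missing idea is the space--time bound $\chi'(\rho_\delta)\in L^{\beta+1}(Q_T)\subset L^{5/2}(Q_T)$ from \eqref{sigular2-delta}. It comes from $|\chi'|^{\beta+1}\lesssim|\widetilde P|+1$, hence from Lemma \ref{lem:uniform-P-delta}. With it:
\begin{itemize}
\item $\rho_\delta\chi'(\rho_\delta)\Div\bu_\delta\in L^{10/9}(Q_T)$.
\item So $\cB[\cdot]$ is bounded in $L^{10/9}(0,T;W^{1,10/9}(\bbt^3))\hookrightarrow L^{10/9}(0,T;L^{30/17}(\bbt^3))$.
\item This pairs with $\rho_\delta\bu_\delta\in L^{10}(0,T;L^{30/13}(\bbt^3))$, which follows from $\|\bu_\delta\|_{L^{30/13}}\le\|\bu_\delta\|_{L^2}^{4/5}\|\bu_\delta\|_{L^6}^{1/5}$.
\end{itemize}
This is exactly where $\beta\ge 3/2$, i.e.\ $\beta+1\ge 5/2>2$, enters.

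\textbf{The $\delta\chi'(\rho_\delta)\Delta\rho_\delta$ piece.} Your integration by parts leads you astray here. Besides a harmless term involving $\sqrt\delta\nabla\chi(\rho_\delta)$, it produces $\delta\chi''(\rho_\delta)|\nabla\rho_\delta|^2$ tested against $\Delta^{-1}\Div(\rho_\delta\bu_\delta)$.
\begin{itemize}
\item Since $|\chi''|\le|\chi'|^2$, \eqref{B-bound-chi-n1} controls this term only in $L^1(Q_T)$.
\item $\Delta^{-1}\Div(\rho_\delta\bu_\delta)$ is not bounded in $L^\infty(Q_T)$; you only have $L^\infty(0,T;H^1)\cap L^2(0,T;W^{1,6})$.
\item So at $\beta=3/2$ the pairing does not close by H\"older with the bounds at hand.
\end{itemize}
No integration by parts is needed. $\Delta\rho_\delta$ is uniformly bounded in $L^2(Q_T)$ by \eqref{eqs:uniform-rho}, so $\chi'(\rho_\delta)\Delta\rho_\delta\in L^{10/9}(Q_T)$. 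The term is then treated verbatim like the $\Div\bu_\delta$ piece, which is what the paper does.
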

\begin{proof}
	Choosing the test function $\bphi=\bw_\delta$ in  \eqref{eqs:weak-formulation-delta}, we obtain
	\begin{align}
		\nonumber
		&\int_0^{T} \psi(t) \int_{\mathbb{T}^{3}} \widetilde{P}(\rho_\delta) \chi^{(0)}_{\delta} \,\dxdt 
        \\
		\nonumber
		& \quad = -\int_0^{T} \psi'(t) \int_{\mathbb{T}^{3}}\rho_\delta \bu_\delta \cdot \cB \big[\chi^{(0)}_{\delta}\big]\,\dxdt 
		- \int_0^{T} \psi(t) \int_{\mathbb{T}^{3}} \rho_\delta \bu_\delta \cdot \cB \big[\pt \chi^{(0)}_{\delta}\big]\,\dxdt 
        \\
		& \nonumber \qquad
		- \int_0^{T} \psi(t) \int_{\mathbb{T}^{3}} \rho_\delta (\bu_\delta \otimes \bu_\delta) :\nabla \cB\big[\chi^{(0)}_{\delta}\big]\,\dxdt
		+ \int_0^{T} \psi(t) \int_{\mathbb{T}^{3}} \rho_\delta \bbd \bu_\delta : \nabla \cB \big[\chi^{(0)}_{\delta}\big]\,\dxdt 
		\\
        &\nonumber\qquad
		- \delta  \int_0^{T} \psi \int_{\mathbb{T}^{3}}  H(\rho_\delta)  \chi^{(0)}_{\delta} \,\dxdt
		+ \delta\int_0^{T} \psi(t) \int_{\mathbb{T}^{3}}   \nabla^2\rho_{\delta}:\nabla\Delta\rho_{\delta} \otimes \cB\big[\chi_\delta^{(0)}\big] \,\dxdt 
		\\
        &\qquad 
		  + \delta\int_0^{T} \psi(t) \int_{\mathbb{T}^{3}}   \nabla\Delta\rho_{\delta} \otimes\nabla\rho_{\delta} : \nabla\cB\big[\chi_\delta^{(0)}\big]  \,\dxdt
        + \delta\int_0^{T} \psi(t) \int_{\mathbb{T}^{3}}   \nabla\rho_{\delta} \cdot\nabla\Delta\rho_{\delta} \chi_\delta^{(0)}\,\dxdt
        \notag \\
        &\qquad +\delta \int_0^{T} \psi(t) \int_{\mathbb{T}^{3}}    \rho_{\delta} \nabla\Delta\rho_{\delta} \cdot \nabla\chi_\delta^{(0)}\,\dxdt
        + \delta\int_0^{T} \psi(t) \int_{\mathbb{T}^{3}} (\nabla \rho_\delta \cdot\nabla\bu_\delta) \cdot \cB\big[\chi_\delta^{(0)}\big] \,\dxdt
		\notag \\
        &\nonumber\qquad	
        + \delta\int_0^{T} \psi(t) \int_{\mathbb{T}^{3}} \Delta \bu_\delta  \cdot \Delta\cB\big[\chi_\delta^{(0)}\big] \,\dxdt
        + \zeta^2\int_0^{T} \psi(t) \int_{\mathbb{T}^{3}}\Delta \rho_\delta \nabla\rho_\delta   \cdot\cB\big[\chi_\delta^{(0)}\big]\, \dxdt
        \\
        &\nonumber\qquad
		  + \zeta^2\int_0^{T} \psi(t) \int_{\mathbb{T}^{3}}\rho_\delta \Delta \rho_\delta \chi_\delta^{(0)}\, \dxdt
        -\frac{1}{\alpha^2}\int_0^{T} \psi(t) \int_{\mathbb{T}^{3}} \Delta^{-1} \Div \bu_\delta \chi_\delta^{(0)}\,\dxdt. 
		\\
        &\label{eqs:P-chi-integral}
		\quad \eqqcolon \sum_{i = 1}^{14} K_i.
	\end{align}
In the following, we use the estimates \eqref{rho-UpLo-delta}, \eqref{eqs:uniform-u-del}, \eqref{eqs:uniform-rho-del}, \eqref{eqs:BD-entropy-delta-1}, \eqref{eqs:uniform-rho} and \eqref{sigular1-delta}--\eqref{B-bound-chi-1} to derive bounds for the right-hand side of \eqref{eqs:P-chi-integral}, which are uniform with respect to $\delta$.  

First, we infer from \eqref{rho-UpLo-delta}, \eqref{eqs:uniform-u-del} and \eqref{B-bound-chi} that 
	\begin{align}
		|K_1|& \nonumber\leq \|\psi'\|_{L^{1}(0,T)}\|\rho_\delta \bu_\delta\|_{L^{\infty}(0,T;L^{2}(\mathbb{T}^{3}))} 
\norm{\cB\big[\chi_\delta^{(0)}\big]}_{L^{\infty}(0,T;L^{2}(\mathbb{T}^{3}))}
		\notag 
		\\
        &\leq C\|\psi'\|_{L^{1}(0,T)}. \notag 
	\end{align}
Next, with the aid of equation \eqref{eqs:app-mass-sigma1}, we rewrite $K_2$ as 
	\begin{align*}
		K_2 
        & = -\int_0^{T}\psi (t)\int_{\mathbb{T}^{3}} \rho_\delta \bu_\delta \cdot \cB\left[\Div(\chi(\rho_\delta) \bu_\delta)\right]\,\dxdt 
        \\
        &\quad 
        +\int_0^{T}\psi(t) \int_{\mathbb{T}^{3}} \rho_\delta \bu_\delta \cdot \cB \left[\chi(\rho_\delta) \Div \bu_\delta-\mean{\chi(\rho_\delta) \Div \bu_\delta} \right]\,\dxdt \\
		& \quad - \int_0^{T}\psi(t)\int_{\mathbb{T}^{3}} \rho_\delta \bu_\delta \cdot \cB \left[\chi'(\rho_\delta)\rho_\delta \Div \bu_\delta - \mean{\chi'(\rho_\delta)\rho_\delta \Div \bu_\delta}\right]\,\dxdt 
        \\
        &\quad +  \delta\int_0^{T}\psi(t)\int_{\mathbb{T}^{3}} \rho_\delta \bu_\delta \cdot \cB \left[\chi'(\rho_\delta)\Delta \rho_\delta-\mean{\chi'(\rho_\delta)\Delta \rho_\delta} \right]\,\dxdt 
		 \\
		& = :\sum_{j=1}^4 K_{2,j}.
	\end{align*}
Then applying \eqref{eqs:Bogovskii-Lp-div}, \eqref{eqs:uniform-u-del}, \eqref{sigular1-delta} and \eqref{B-bound-chi}, we get 
	\begin{align}
        \notag 
		|K_{2,1}|& \nonumber\leq \|\psi\|_{L^{\infty}(0,T)}\|\rho_\delta \mathbf{u}_{\delta}\|_{L^{2}(0,T;L^{6}(\mathbb{T}^{3}))}
		\|\chi(\rho_\delta)\|_{L^{\infty}(0,T;L^{\frac{3}{2}}(\mathbb{T}^{3}))}\|\mathbf{u}_{\delta}\|_{L^{2}(0,T;L^{6}(\mathbb{T}^{3}))}
		\\& 
        \leq C\|\psi\|_{L^{\infty}(0,T)}.
        \notag 
	\end{align}
	In view of \eqref{eqs:Bogovskii-Lp} and \eqref{sigular1-delta}, we have
	\begin{align}
		\nonumber
		|K_{2,2}|&\leq \|\psi\|_{L^{\infty}(0,T)} \|\rho_{\delta}\mathbf{u}_{\delta}\|_{L^{2}(0,T;L^\frac{3}{2}(\mathbb{T}^{3}))}
		  \left\|\cB\left[ \chi(\rho_\delta) \Div\bu_\delta-\mean{\chi(\rho_\delta) \Div\bu_\delta} \right]\right\|_{L^{2}(0,T;L^{3}(\mathbb{T}^{3}))}
		\\ \nonumber
        & \leq \|\psi\|_{L^{\infty}(0,T)} 
        \|\rho_\delta\|_{L^\infty(0,T;L^\infty(\mathbb{T}^3))} 
        \|\mathbf{u}_{\delta}\|_{L^{2}(0,T;L^{2}(\mathbb{T}^{3}))}
        \|\chi(\rho_\delta) \Div\bu_\delta \|_{L^{2}(0,T;L^\frac{3}{2}(\mathbb{T}^{3}))}
		\\ \nonumber
        &\leq C\|\psi\|_{L^{\infty}(0,T)} \|\mathbf{u}_{\delta}\|_{L^{2}(0,T;L^{6}(\mathbb{T}^{3}))}
		\|\chi(\rho_\delta) 
        \|_{L^{\infty}(0,T;L^{6}(\mathbb{T}^{3}))}\| \Div\bu_\delta \|_{L^{2}(Q_T)}
        \\ 
        \notag 
		&\leq C\|\psi\|_{L^{\infty}(0,T)}.
	\end{align}
Using the Sobolev embedding theorem $W^{1,\frac{10}{9}}(\mathbb{T}^{3}) \hookrightarrow L^\frac{30}{17}(\mathbb{T}^{3})$, we infer from  \eqref{eqs:Bogovskii-Lp} that
	\begin{align*}
		& \norm{\cB\left[\chi'(\rho_\delta) \rho_\delta \Div \bu_\delta
        -\mean{\chi'(\rho_\delta) \rho_\delta \Div \bu_\delta} \right]}_{L^{\frac{30}{17}}(\mathbb{T}^{3})} \\
		& \quad \leq  C\norm{\cB\left[\chi'(\rho_\delta)\rho_\delta \Div \bu_\delta-\mean{\chi'(\rho_\delta) \rho_\delta \Div \bu_\delta}\right]}_{W^{1,\frac{10}{9}}(\mathbb{T}^{3})} \\
		&\quad  \leq C \norm{\rho_\delta}_{L^\infty(\mathbb{T}^{3})}\norm{\chi'(\rho_\delta)}_{L^{\frac{5}{2}}(\mathbb{T}^{3})} \norm{\Div \bu_\delta}_{L^2(\mathbb{T}^{3})}.
	\end{align*}
In addition, by interpolation, we have
$$
\|\mathbf{u}_\delta\|_{L^\frac{30}{13}(\mathbb{T}^3)}\leq C\|\mathbf{u}_\delta\|_{L^2(\mathbb{T}^3)}^\frac45\|\mathbf{u}_\delta\|_{L^6(\mathbb{T}^3)}^\frac15. 
$$
These estimates together with \eqref{eqs:Bogovskii-Lp-div}, \eqref{eqs:uniform-u-del} and  \eqref{sigular2-delta} yield  
	\begin{align}
		\nonumber
		\abs{K_{2,3}} 
        & \leq \|\psi\|_{L^{\infty}(0,T)} \|\rho_{\delta}\mathbf{u}_{\delta}\|_{L^{10}(0,T; L^{\frac{30}{13}}(\mathbb{T}^{3}))} \\
        &\quad \times \norm{\cB\left[\chi'(\rho_\delta) \rho_\delta \Div \bu_\delta-\mean{\chi'(\rho_\delta) \rho_\delta \Div \bu_\delta}\right]}_{L^\frac{10}{9}(0,T;L^{\frac{30}{17}}(\mathbb{T}^{3}))}
        \notag 
        \\
        & \notag \leq \|\psi\|_{L^{\infty}(0,T)}\|\rho_{\delta}\|_{L^\infty(0,T;L^\infty(\mathbb{T}^3))}^2 \|\mathbf{u}_{\delta}\|_{L^{\infty}(0,T; L^2(\mathbb{T}^{3}))}^\frac45
        \|\mathbf{u}_\delta \|_{L^2(0,T;H^1(\mathbb{T}^3))}^\frac65 \|\chi'(\rho_\delta)\|_{L^{\frac{5}{2}}(Q_T)} \\
        \notag 
		& \leq C\|\psi\|_{L^{\infty}(0,T)},
	\end{align}
    where we have used again the assumption $\beta\geq 3/2$. 
	Similarly, with \eqref{eqs:uniform-rho-del} and \eqref{eqs:uniform-rho} we can conclude 
    $$|K_{2,4}|\leq C\|\psi\|_{L^{\infty}(0,T)}.$$
    Combining the above estimates yields \begin{align}
	  |K_{2}|\leq C\|\psi\|_{L^{\infty}(0,T)}.\notag 
	\end{align}
    Next, using \eqref{sigular1-delta}, \eqref{B-bound-chi}, we easily find 
	\begin{align}
        \notag 
		|K_3| + |K_4|+ |K_{12}| + |K_{13}| + |K_{14}| 
        \leq 
 C\|\psi\|_{L^{\infty}(0,T)}.
	\end{align}
The term $K_5$ can be bounded using \eqref{H def}, \eqref{rho-UpLo-delta}, and \eqref{sigular1-delta}, i.e., 
$$|K_5|\leq C\|\psi\|_{L^{\infty}(0,T)}.
$$ 
Concerning $K_6$, we infer from \eqref{eqs:uniform-rho-del}, \eqref{eqs:uniform-rho} and \eqref{B-bound-chi} that 
\begin{align*}
|K_6|& \leq 
\|\psi(t)\|_{L^{\infty}(0,T)} \| \delta\nabla\Delta\rho_\delta\|_{L^{2}(Q_T)}\|\nabla^2\rho_\delta\|_{L^{2}(Q_T)}
\norm{\cB\big[\chi_\delta^{(0)}\big]}_{L^{\infty}(0,T;L^{\infty}(\mathbb{T}^{3}))}
\\
&\leq C \|\psi(t)\|_{L^{\infty}(0,T)} \| \delta\nabla\Delta\rho_\delta\|_{L^{2}(Q_T)}\|\nabla^2\rho_\delta\|_{L^{2}(Q_T)}
\norm{\cB\big[\chi_\delta^{(0)}\big]}_{L^{\infty}(0,T;W^{1,4}(\mathbb{T}^{3}))}
\\
&\leq C\|\psi(t)\|_{L^{\infty}(0,T)}.  
\end{align*}
In a similar manner, we can conclude 
\begin{align*}
|K_7|+|K_8|
&\leq \|\psi(t)\|_{L^{\infty}(0,T)}    \|\delta \nabla\Delta\rho_\delta\|_{L^{2}(Q_T)}\| \nabla\rho_\delta\|_{L^{2}(0,T;L^{6}(\mathbb{T}^{3}))}
		\| \chi_\sigma^{(0)} \|_{L^{\infty}(0,T;L^{3}(\mathbb{T}^{3}))}\\
        &\leq C \|\psi(t)\|_{L^{\infty}(0,T)}  \|\delta \nabla\Delta\rho_\delta\|_{L^{2}(Q_T)}\| \rho_\delta\|_{L^{2}(0,T;H^{2}(\mathbb{T}^{3}))}
		\| \chi_\sigma^{(0)} \|_{L^{\infty}(0,T;L^{3}(\mathbb{T}^{3}))}\\
    &\leq C \|\psi(t)\|_{L^{\infty}(0,T)}.
\end{align*}
For $K_9$, exploiting \eqref{rho-UpLo-delta}, \eqref{eqs:BD-entropy-delta-1} and \eqref{B-bound-chi-n1}, we get 
	\begin{align}\nonumber
		|K_9|& \leq \|\psi(t)\|_{L^{\infty}(0,T)} \|\rho_\delta\|_{L^{\infty}(0,T;L^{\infty}(\mathbb{T}^{3}))}\|\sqrt{\delta}\nabla\Delta\rho_\delta\|_{L^{2}(Q_T)}
		\|\sqrt{\delta}\nabla \chi_\sigma^{(0)}\|_{L^{2}(0,T;L^{2}(\mathbb{T}^{3}))}
		\\
        &\leq C\|\psi(t)\|_{L^{\infty}(0,T)}.
        \notag 
	\end{align}
Finally, $K_{10}$, $K_{11}$ can be estimated in the following way:
	\begin{align}
				|K_{10}|
        & \leq \|\psi(t)\|_{L^{\infty}(0,T)}  \|\sqrt{\delta}\nabla \rho_\delta\|_{L^2(0,T;L^6(\mathbb{T}^3))} \| \sqrt{\delta}\nabla\bu_\delta\|_{L^2(0,T;L^2(\mathbb{T}^3))} 
        \norm{\cB\big[\chi_\delta^{(0)} \big]}_{L^\infty(0,T;L^3(\mathbb{T}^3))}  
		\notag \\
       & \leq C \|\psi(t)\|_{L^{\infty}(0,T)} 
       \notag 
	\end{align}
and 
    \begin{align}
        |K_{11}| & \leq \|\psi(t)\|_{L^{\infty}(0,T)}	
        \|\sqrt{\delta}  \Delta \bu_\delta  \|_{L^2(0,T;L^2(\mathbb{T}^3))}  \norm{\sqrt{\delta} \Delta\cB\big[\chi_\delta^{(0)}\big]}_{L^2(0,T;L^2(\mathbb{T}^3))}, \notag 
    \end{align}
where in the last estimate, we have used \eqref{eqs:BD-entropy-delta-1} and \eqref{B-bound-chi-1}. Substituting the above estimates for $K_1, ..., K_{14}$ into \eqref{eqs:P-chi-integral} leads to 
\begin{align}
		\label{K1-11}
		\bigg|\int_0^{T} \psi(t) \int_{\mathbb{T}^{3}} \widetilde{P}(\rho_\delta) \chi^{(0)}_{\delta} \,\dxdt\bigg|\leq C( \|\psi'(t)\|_{L^{1}(0,T)}+\|\psi(t)\|_{L^{\infty}(0,T)}).
	\end{align}
Choosing $\psi$ as in \eqref{cut-off function} and 
then sending $ m \to \infty $ in \eqref{K1-11}, we get 
\begin{align}
		\left|\int_0^{T}\int_{\mathbb{T}^{3}} \widetilde{P}(\rho_\delta) \chi_\delta^{(0)}\,\dxdt\right| 
        \leq C.
        \label{K1-11-00}
	\end{align}
    
Applying \eqref{sigular1-delta} with $p=1$, we can check that $\sup_{0\leq t\leq T} \abs{\int_{\mathbb{T}^{3}} \chi(\rho_\delta)\, \dx}$ is bounded by a positive constant independent of $\delta$. Consequently, using the same argument as that for Lemma \ref{lem:uniform-P-chi-sigma}, we can conclude \eqref{eqs:uniform-P-chi} from \eqref{K1-11-00}. This completes the proof. 
\end{proof}
\begin{remark}\rm 
\label{rem:2D-beta}
The assumption $\beta\geq 3/2$ has been used to verify \eqref{B-bound-chi-n1}, \eqref{B-bound-chi-1} and the estimates for $K_{2,3}$, $K_{2,4}$. When the spatial dimension is two, this condition can be relaxed to $\beta>1$. Recall the Br\'{e}zis--Gallouet inequality in two dimensions
\begin{equation}
 \|g\|_{L^\infty(\mathbb{T}^2)}\leq C\|g\|_{H^1(\mathbb{T}^2)} \sqrt{\ln (\mathrm{e}+\|g\|_{H^2(\mathbb{T}^2)})},\quad \forall\, g\in H^2(\mathbb{T}^2).
 \label{BG}
\end{equation}
Applying \eqref{BG} with $g= \sqrt{\delta} \nabla \rho_\delta$, for any $q\in (2,\infty)$, we have 
\begin{align}
\big\|\sqrt{\delta}\nabla\chi^{(0)}_\delta\big\|_{L^{2}(0,T;L^{2}(\mathbb{T}^{3}))}^2
&\leq\|\sqrt{\delta}\nabla\rho_\delta\|_{L^{\frac{2q}{q-2}}(0,T;L^{\infty}(\mathbb{T}^{3}))}^2
\|\chi'(\rho_\delta)\|_{L^{q}(0,T;L^{2}(\mathbb{T}^{3}))}^2
\notag \\
&\leq C \|\sqrt{\delta}\nabla\rho_\delta\|_{L^{\infty}(0,T;H^{1}(\mathbb{T}^{3}))}^2 \left[\int_0^T\big(\mathrm{e} + \|\sqrt{\delta}\nabla\rho_\delta\|_{H^{2}(\mathbb{T}^{3})}\big)^2\,\dt\right]^{\frac{q-2}{q}} 
\notag \\
&\quad\times \|\chi'(\rho_\delta)\|_{L^{\infty}(0,T;L^{\beta}(\mathbb{T}^{3}))}^{\beta(\beta-1)}
\left[ \int_0^T \|\chi'(\rho_\delta)\|_{L^{\beta+1}(\mathbb{T}^{3})}^{\frac{(2-\beta)(1+\beta)q}{2}}\,\dt\right]^{\frac{2}{q}}.
\label{2D-1}
\end{align}
Taking $\beta=2-2/q>1$, then  
$$
\frac{(2-\beta)(1+\beta)q}{2}=1+\beta, 
$$
so that the right-hand side of \eqref{2D-1} can be uniformly bounded in view of \eqref{eqs:BD-entropy-delta-1}, \eqref{sigular1-delta} and \eqref{sigular2-delta}. This recovers the estimates \eqref{B-bound-chi-n1} and \eqref{B-bound-chi-1}. Since $q>2$ can be arbitrarily close to $2$, then $\beta>1$ can be arbitrarily close to $1$. Next, for any $0<\epsilon\ll 1$, it holds 
	\begin{align*}
		& \norm{\cB \left[\chi'(\rho_\delta) \rho_\delta \Div \bu_\delta-\mean{\chi'(\rho_\delta)\rho_\delta  \Div \bu_\delta} \right]}_{L^{2+\epsilon}(\mathbb{T}^{2})}\\ 
		&\quad \leq C\norm{\cB \left[\chi'(\rho_\delta) \rho_\delta \Div \bu_\delta-\mean{\chi'(\rho_\delta)\rho_\delta  \Div \bu_\delta} \right]}_{W^{1,1+\frac{\epsilon}{4+\epsilon}}(\mathbb{T}^{2})}
       \\ 
       &\quad \leq C\norm{\chi'(\rho_\delta)\rho_\delta \Div \bu_\delta}_{L^{1+\frac{\epsilon}{4+\epsilon}}(\mathbb{T}^{2})}
		\\
        &\quad \leq C\norm{\chi'(\rho_\delta)}_{L^{2+\epsilon}(\mathbb{T}^{2})} \norm{\rho_\delta}_{L^{\infty}(\mathbb{T}^{2})}\norm{\Div \bu_\delta}_{L^2(\mathbb{T}^{2})}. 
	\end{align*}
 Consequently, taking $\beta=1+\epsilon$, we infer from \eqref{eqs:uniform-u-del} and \eqref{sigular2-delta} that 
	\begin{align}
		\notag 
		|K_{2,3}|
		& \nonumber\leq \|\psi\|_{L^{\infty}(0,T)}
       \norm{\rho_\delta}_{L^{\infty}(0,T;L^\infty(\mathbb{T}^2))} ^2\|\mathbf{u}_{\delta}\|_{L^{\frac{4+2\epsilon}{\epsilon}}\Big(0,T;L^{\frac{2+\epsilon}{1+\epsilon}}(\mathbb{T}^{2})\Big)}\| \Div \bu_\delta\|_{L^{2}(Q_T)} 		
		\|\chi'(\rho_\delta)\|_{L^{\beta+1}(Q_T)}
		\\
        &\leq C\|\psi\|_{L^{\infty}(0,T)}.
        \notag 
	\end{align}
    The term $K_{2,4}$ can be handled in a similar way by \eqref{eqs:uniform-rho-del}, \eqref{eqs:uniform-rho} and \eqref{sigular2-delta}. Since $\epsilon>0$ can be arbitrarily small, then $\beta>1$ can be arbitrarily close to $1$. 
\end{remark}

\subsection{The limit procedure as $\delta\to 0$}
\label{weak-formulation-sigma} 

First, we infer from \eqref{eqs:uniform-u-del}, \eqref{eqs:uniform-rho-del}, \eqref{eqs:BD-entropy-delta-1} and \eqref{eqs:uniform-rho} that as $\delta\to 0$, 
\begin{align*}
	& \delta\abs{ \int_0^T \int_{\bbt^3} H(\rho_\delta) \Div \bphi\,\dxdt}
	\leq \delta \normm{H(\rho_\delta)}_{L^2(Q_T)} \normm{\Div \bphi}_{L^2(Q_T)} \to 0, 
    \\
    & \delta\abs{  \int_0^T \int_{\bbt^3} (\nabla \rho_\delta \cdot \nabla \bu_\delta) \cdot \bphi \,\dxdt }
    \leq \delta\norm{\nabla \rho_\delta}_{L^\infty(0,T;L^2(\bbt^3)} \norm{\nabla \bu_\delta}_{L^2(Q_T)} \norm{\bphi}_{L^2(0,T;L^\infty(\bbt^3))} \to 0,
    \\
    & \delta \abs{ \int_0^T \int_{\bbt^3} \Delta \bu_\delta \cdot \Delta \bphi \,\dxdt} 
    \leq \sqrt{\delta} \norm{\sqrt{\delta}\Delta \bu_\delta}_{L^2(Q_T)} \norm{\Delta \bphi}_{L^2(Q_T)}  \to 0,
    \\	
	& \delta\abs{ \int_0^{T} \int_{\mathbb{T}^{3}}   \nabla^2\rho_\delta :(\nabla\Delta\rho_\delta  \otimes \bphi)\,\dxdt}
	\leq \sqrt{\delta} \normm{ \rho_\delta}_{L^2(0,T;H^2(\mathbb{T}^3))}  \normm{\sqrt{\delta} \nabla \Delta \rho_\delta}_{L^2(Q_T)} \normm{\bphi }_{L^\infty(Q_T)}\to 0,
    \\
    & \delta \abs{\int_0^{T} \int_{\mathbb{T}^{3}}   (\nabla\Delta\rho_\delta\otimes \nabla\rho_\delta) : \nabla\bphi\,\dxdt} 
    + \delta\abs{\int_0^{T} \int_{\mathbb{T}^{3}}   \nabla\rho_\delta\cdot\nabla\Delta\rho_\delta   \Div\bphi\,\dxdt}
    \\
    &\quad 
    \leq \sqrt{\delta} \normm{ \nabla\rho_\delta}_{L^2(0,T;L^6(\mathbb{T}^3))}  \normm{\sqrt{\delta} \nabla \Delta \rho_\delta}_{L^2(Q_T)} \normm{\bphi }_{L^\infty(0,T;W^{1,3}(\mathbb{T}^3))}\to 0,
    \\
    & \delta \abs{\int_0^{T} \int_{\mathbb{T}^{3}}   \rho_\delta \nabla\Delta \rho_\delta \cdot \nabla\Div\bphi\,\dxdt}
    \leq \sqrt{\delta} \normm{  \rho_\delta}_{L^\infty(Q_T)} \normm{\sqrt{\delta} \nabla \Delta \rho_\delta}_{L^2(Q_T)} \normm{\bphi }_{L^2(0,T;H^2(\mathbb{T}^3))}\to 0.
\end{align*}

Next, the uniform bounds \eqref{eqs:uniform-u-del}, \eqref{eqs:uniform-rho-del}, \eqref{eqs:conservation-mass-delta-b}, \eqref{eqs:uniform-mup-H2}, \eqref{eqs:BD-entropy-delta-1}, \eqref{eqs:uniform-rho} and \eqref{eqs:uniform-pt-rho} yields that there exists a subsequence $\{(\bu_\delta,\rho_\delta,\mu_{p,\delta})\}$ (not relabeled for simplicity) and a triple $(\bu,\rho,\mu_{p})$ such that as $\delta \to 0$:
	\begin{alignat}{3}
		\label{eqs:conv-u-L2}
		\bu_{\delta} & \rightharpoonup \bu, \qquad && \text{ weakly in } L^2(0,T;H^1(\bbt^{3}))\ \ \text{and}\ \ \text{ weakly-star in } L^\infty(0,T;L^2(\bbt^{3})), 
        \\
        \label{eqs:conv-rho-L2}
		\rho_{\delta} & \rightharpoonup \rho, && \text{ weakly in } L^2(0,T;H^{2}(\bbt^{3})) \ \ \text{and}\ \ \text{ weakly-star in } L^\infty(0,T;H^{1}(\bbt^{3})),
        \\
		\label{eqs:conv-pt-rho}
		\pt \rho_{\delta} & \rightharpoonup \pt \rho, && \text{ weakly in } L^2(0,T;L^{\frac{3}{2}}(\bbt^{3})),\\
        \label{eqs:conv-mup}
		  \mu_{p,\delta} & \rightharpoonup \mu_{p}, && \text{ weakly in } L^2(0,T;H^2(\bbt^{3})).
	\end{alignat}
In the following, convergence as $\delta\to 0$ is always understood in the sense of a subsequence. 
Using the Aubin--Lions--Simon compactness theorem, we can deduce that  
\begin{alignat}{3}
	\label{eqs:strongConv-rho-L2}
	\rho_{\delta} & \rightarrow \rho,  \quad \text{ strongly in } L^{2}(0,T;W^{1,q}(\mathbb{T}^{3}))\cap C([0,T];H^s(\mathbb{T}^3)), \quad 1 \leq q < 6,\ \ 0\leq s<1,
\end{alignat}
which also implies 
\begin{alignat}{3}
	\label{eqs:conv-rho-a.e.}
	\rho_{\delta} & \rightarrow \rho,\quad  && \text{ a.e. in } Q_T.
\end{alignat}
Then from Lemma \ref{lem:uniform-P-chi} and \eqref{eqs:conv-rho-a.e.}, we can apply the Dunford--Pettis theorem to conclude
\begin{align}
    \notag 
	\widetilde{P}(\rho_\delta) \rightharpoonup \widetilde{P}(\rho), \quad \text{ weakly in } L^1(Q_T).
\end{align}

Using the weak formulation \eqref{eqs:app-momentum-delta} and the previous uniform estimates for $\mathbf{u}_\delta$, $\rho_\delta$, $\widetilde{P}_\delta$, we can check that 
\begin{align*}
& \left|\big\langle \partial_t(\rho_\delta\mathbf{u}_\delta),\bphi\big\rangle_{L^1(0,T;(W^{2,4}(\mathbb{T}^3))^*),L^\infty(0,T;W^{2,4}(\mathbb{T}^3))}\right| 
\leq C \|\bphi\|_{L^\infty(0,T;W^{2,4}(\mathbb{T}^3))},
\end{align*}
which implies 
$$
\partial_t(\rho_\delta \mathbf{u}_\delta)\ \ \text{is uniformly bounded in} \ \ L^1(0,T;(W^{2,4}(\mathbb{T}^3))^*). 
$$
On the other hand, by interpolation, we can check that
$$
 \rho_\delta\mathbf{u}_\delta\quad \text{is uniformly bounded in }\ \ L^2(0,T;W^{1,\frac32}(\mathbb{T}^3))\cap L^{\frac{10}{3}}(Q_T). 
$$
Following the argument in \cite{ADG20131,AGW2026,FS2021}
we can conclude 
\begin{align}
    \notag 
	\rho_\delta \bu_\delta \to \rho \bu \quad \text{and}\quad \bu_\delta \to \bu \quad  \text{ strongly in } L^2(0,T; L^2(\bbt^3)).
\end{align}

Invoking the above convergence results, for any $\bphi \in C_c^\infty([0,T);C^\infty(\bbt^3))$, we have 
\begin{align*}
	\int_0^{T}\int_{\mathbb{T}^{3}}\rho_{\delta} \bu_{\delta}\cdot\partial_{t}\bphi\,\dxdt 
	& \rightarrow \int_0^{T}\int_{\mathbb{T}^{3}}\rho \bu\cdot\partial_{t}\bphi\,\dxdt,
    \\
	\int_0^{T}\int_{\mathbb{T}^{3}}\rho_{\delta} \bu_{\delta} \otimes \bu_{\delta}: \nabla \bphi\,\dxdt 
	& \rightarrow \int_0^{T}\int_{\mathbb{T}^{3}}\rho \bu \otimes \bu: \nabla \bphi\,\dxdt, 
    \\
	\int_0^{T}\int_{\mathbb{T}^{3}} \rho_{\delta}\bbd\bu_{\delta}: \nabla \bphi\,\dxdt
	& \rightarrow \int_0^{T}\int_{\mathbb{T}^{3}}\rho\bbd\bu: \nabla \bphi\,\dxdt,
    \\
    \int_0^{T}\int_{\mathbb{T}^{3}}\widetilde{P}(\rho_{\delta}) \mathrm{div}\boldsymbol{\varphi}\dxdt
	&\rightarrow
	\int_0^{T}\int_{\mathbb{T}^{3}}\widetilde{P}(\rho) \mathrm{div}\boldsymbol{\varphi}\,\dxdt,
	\\ \int_0^{T}\int_{\mathbb{T}^{3}}(\rho_{\delta}\Delta\rho_{\delta})\mathrm{div}\boldsymbol{\varphi}\,\dxdt
	& \rightarrow\int_0^{T}\int_{\mathbb{T}^{3}}(\rho\Delta\rho)\mathrm{div}\boldsymbol{\varphi}\,\dxdt,
	\\ 
    \int_0^{T}\int_{\mathbb{T}^{3}}\Delta \rho_{\delta}(\nabla \rho_{\delta}\cdot\boldsymbol{\varphi})\,\dxdt
	& \rightarrow\int_0^{T}\int_{\mathbb{T}^{3}}\Delta \rho (\nabla \rho\cdot\boldsymbol{\varphi})\,\dxdt,
    \\
    \int_{\mathbb{T}^3} \rho_{0,\delta}\mathbf{u}_0\cdot \bphi(0)\,\dx
    & \to \int_{\mathbb{T}^3} \rho_{0}\mathbf{u}_0\cdot \bphi(0)\,\dx,
\end{align*}
as $\delta\to 0$. Consequently, we can pass to the limit as $\delta\rightarrow0$ in the weak formulation \eqref{eqs:weak-formulation-delta} to obtain 
\eqref{weak1-sigma}. 
Moreover, using \eqref{eqs:conv-u-L2}--\eqref{eqs:strongConv-rho-L2}, we can recover \eqref{weak2-sigma}
and \eqref{weak3-sigma} by taking the limit  $\delta\rightarrow0$ in  \eqref{eqs:app-mass-sigma1} and \eqref{eqs:app-mup-delta}, respectively. From \eqref{eqs:rho0-convergence}, \eqref{eqs:strongConv-rho-L2} and $\rho_{\delta}|_{t=0}=\rho_{0,\delta}$, we see that the initial condition \eqref{attain-initial-1-sigma} is satisfied. 

Finally, taking the limit $\delta\to 0$ in \eqref{eqs:energy-inequality-delta}, \eqref{eqs:BD-entropy-delta-1} and \eqref{conservation law-000}, we can obtain the energy equality \eqref{eqs:energy-inequality}, the BD-energy estimate \eqref{eqs:BD-entropy} and the conservation of mass \eqref{conservation law-1111}, respectively. The details are omitted. 

The proof of Theorem \ref{thm:NSK} is complete.
\qed

\section{Proof of Theorem \ref{thm:main}}
\label{sec:limit-final}
Now we are ready to prove our main result Theorem \ref{thm:main}. 
Let $(\mathbf{u},\rho,\mu_p)$ be the global weak solution to problem \eqref{eqs:app-limit} constructed in Theorem \ref{thm:NSK}. 
Define 
\begin{align}
	\label{eqs:phi=rho}
	\phi \coloneqq -\zeta\rho+\zeta-1\quad \text{a.e. in}\ \ Q_T.
\end{align}
It follows that 
\begin{align*}
	& \phi \in BC_w(0,T; H^1(\mathbb{T}^{3})) \cap L^{2}(0,T; H^{2}(\mathbb{T}^{3}))\cap W^{1,\frac43}(0,T;L^2(\mathbb{T}^3)),  \\ 
    &\phi\in L^\infty(Q_T)\quad \text{with}\ \ \absm{\phi(x,t)} < 1 \text{ a.e. in } Q_T.
\end{align*}
Since $\widetilde{P}(\rho)= \rho\widetilde{F}'(\rho)-\widetilde{F}(\rho)\in L^1(Q_T)$ and $\widetilde{F}(\rho)\in L^\infty(0,T;L^1(\mathbb{T}^3))$, we can conclude  
$$
F'(\phi) =\frac{1}{\zeta}\widetilde{F}'(\rho)\in L^1(Q_T).
$$
Define
\begin{alignat*}{3}
	\mu  \coloneqq - \Delta \phi  + F'(\phi) , \quad 
	p  \coloneqq \frac{1}{\alpha} (\mu_{p}-\mu),\quad \text{a.e. in}\ \ Q_T.
\end{alignat*}
Then we have $\mu,p \in L^1(Q_T)$. 

From \eqref{weak2-sigma}, \eqref{weak3-sigma} and the linear relation \eqref{eqs:phi=rho}, we obtain 
\begin{align*}
	\partial_{t}\phi
	+ \Div(\phi \mathbf{u})
	& = \frac{1}{\alpha} \Div\mathbf{u}=\Delta\mu_p,\quad \text{a.e.~in } Q_T.
\end{align*}
This implies that \eqref{model2-3} holds. Next, observing that 
\begin{align}
	&\nonumber-\zeta^2\int_0^{T}\int_{\mathbb{T}^{3}}\rho \Delta \rho\Div \bphi\,\dx\dt  
	- \zeta^2\int_0^{T}\int_{\mathbb{T}^{3}}\Delta \rho(\nabla \rho \cdot\bphi)\,\dx\dt
	\\
    &\quad= \zeta \int_0^{T}\int_{\mathbb{T}^{3}}\rho \Delta \phi\Div \bphi\,\dx\dt
	- \int_0^{T}\int_{\mathbb{T}^{3}} \Delta \phi(\nabla \phi \cdot\bphi)\,\dx\dt,
    \notag 
\end{align}
and
\begin{align}
	\int_0^{T}\int_{\mathbb{T}^{3}}\widetilde{P}(\rho)\Div\bphi\,\dx\dt
	\nonumber
    &= \int_0^{T}\int_{\mathbb{T}^{3}}\big(\rho \widetilde{F}'(\rho) - \widetilde{F}(\rho)\big)\Div\bphi\,\dx\dt
	\\
    \notag 
    &=\int_0^{T}\int_{\mathbb{T}^{3}}\big(-\zeta\rho F'(\phi) - F(\phi)\big)\Div\bphi\,\dx\dt,
\end{align}
we find   
\begin{align*}
	&  \int_0^{T}\int_{\mathbb{T}^{3}}\widetilde{P}(\rho)\Div\bphi\,\dx\dt
-\zeta^2\int_0^{T}\int_{\mathbb{T}^{3}}\rho \Delta \rho\Div \bphi\,\dx\dt  
	\\
	& \quad-\zeta^2 \int_0^{T}\int_{\mathbb{T}^{3}}\Delta \rho(\nabla \rho \cdot\bphi)\,\dx\dt
	-\frac{1}{\alpha} \int_0^{T}\int_{\mathbb{T}^{3}}\nabla \mu_p\cdot\bphi\,\dx\dt  
	\\
    &=\int_0^{T}\int_{\mathbb{T}^{3}}\big(-\zeta\rho F'(\phi) - F(\phi)\big)\Div\bphi\,\dx\d\tau 
    +\frac{1}{\alpha} \int_0^{T}\int_{\mathbb{T}^{3}}\mu_p\Div\bphi\,\dx\dt \\
    &\quad+ \zeta \int_0^{T}\int_{\mathbb{T}^{3}}\rho \Delta \phi\Div \bphi\,\dx\dt
	- \int_0^{T}\int_{\mathbb{T}^{3}} \Delta \phi (\nabla \phi \cdot\bphi)\,\dx\dt
    \\ & 
	= \int_0^{T}\int_{\mathbb{T}^{3}} p\Div\bphi\,\dx\dt
    +\int_0^{T}\int_{\mathbb{T}^{3}} \phi\mu\Div\bphi\,\dx\dt
    -\int_0^{T}\int_{\mathbb{T}^{3}}F(\phi)\Div\bphi\,\dx\dt
	\nonumber\\
	& \quad -\int_0^{T}\int_{\mathbb{T}^{3}}\frac{\absm{\nabla \phi}^2}{2}\Div\bphi\,\dx\dt
	+\int_0^{T}\int_{\mathbb{T}^{3}}(\nabla \phi \otimes \nabla \phi):\nabla \bphi\,\dx\dt,
\end{align*}
for any $\bphi\in C_c^\infty([0,T);C^\infty(\mathbb{T}^3))$. 
Inserting the above identity into \eqref{weak1-sigma}, 
we recover the weak formulation \eqref{original-weak1} of the momentum equation. 

The proof of Theorem \ref{thm:main} is complete. 
\qed

\appendix
\section{Technical Lemma}
\label{sec:technical-lemma}
	
We report some properties of 
mollifiers in the torus $\mathbb{T}^3$. 
	\begin{lemma}
		Let $\bbt^d = \bbr^d/\bbz^d$ be the $d$-dimensional flat torus and let
		$\eta\in C_c^\infty(\bbr^d)$ be a standard mollifier that is radial symmetric, i.e.,
		\[
		\eta\ge 0,\qquad \supp\eta\subset B_1(0),\qquad \int_{\bbr^d}\eta(x)\,\dx=1 .
		\]
		For $\varepsilon>0$, set
		\[
		\eta_\varepsilon(x) = \frac{1}{\varepsilon^{d}}\eta\left(\frac{x}{\varepsilon}\right),\qquad
		\eta^{\mathrm{per}}_\varepsilon(x) = \sum_{\mathbf{k}\in\bbz^d}\eta_\varepsilon(x+\mathbf{k}),
		\]
		so that $\eta^{\mathrm{per}}_\varepsilon$ is $\bbz^d$-periodic and can be regarded as a
		smooth function on $\bbt^d$ with $\int_{\bbt^d}\eta^{\mathrm{per}}_\varepsilon \,\dx=1$.
		For $f\in L^1(\bbt^d)$, we define the mollification
		\[
		(J_\varepsilon f)(x) = (f * \eta^{\mathrm{per}}_\varepsilon)(x)
		:= \int_{\bbt^d} f(y)\,\eta^{\mathrm{per}}_\varepsilon(x-y)\,\mathrm{d}y .
		\]
		Then the following properties hold:
		\begin{enumerate}
			\item[(a)] For every $f\in L^1(\bbt^d)$ we have $J_\varepsilon f\in C^\infty(\bbt^d)$.
			
			\item[(b)] For $f\in L^p(\bbt^d)$, $1\le p\le\infty$, we have
			$\|J_\varepsilon f\|_{L^p(\bbt^d)} \le \|f\|_{L^p(\bbt^d)}$.
			When $1\le p<\infty$ we have
			$\|J_\varepsilon f-f\|_{L^p(\bbt^d)}\to 0$ as $\varepsilon\to0$, while
			if $f\in C(\bbt^d)$ then $J_\varepsilon f \to f$ uniformly on $\bbt^d$.
			
			\item[(c)] Let $s\in\bbr$ and $f\in H^s(\bbt^d)$. Then we have
			\begin{align*}
			    \|J_\varepsilon f\|_{H^{s+1}(\bbt^d)}
                \leq C \varepsilon^{-1} \|f\|_{H^{s}(\bbt^d)}.
			\end{align*}
			
			
			
		\end{enumerate}
	\end{lemma}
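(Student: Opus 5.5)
The plan is to build $J_\varepsilon$ from the explicit kernel $\eta^{\mathrm{per}}_\varepsilon$ and to reduce every claim to elementary facts about this kernel: it is smooth, nonnegative and of unit mass on $\bbt^d$, and its Fourier coefficients are
\[
\widehat{\eta^{\mathrm{per}}_\varepsilon}(k)=\int_{\bbt^d}\eta^{\mathrm{per}}_\varepsilon(x)e^{-2\pi\mathrm{i}k\cdot x}\,\dx=\int_{\bbr^d}\eta_\varepsilon(x)e^{-2\pi\mathrm{i}k\cdot x}\,\dx=\hat\eta(\varepsilon k),
\]
where we unfold the periodic sum over a fundamental cell. Smoothness holds because, for $\varepsilon<1$, the sum defining $\eta^{\mathrm{per}}_\varepsilon$ is locally finite. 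For general $\varepsilon$, the series converges in every $C^m$ on compact sets, since $\eta_\varepsilon$ has compact support.

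For (a), we differentiate under the integral sign:
\[
\partial^\gamma(J_\varepsilon f)(x)=\int_{\bbt^d} f(y)\,\partial^\gamma\eta^{\mathrm{per}}_\varepsilon(x-y)\,\mathrm{d}y.
\]
This is justified by dominated convergence, using $f\in L^1(\bbt^d)$ and the boundedness of every derivative of $\eta^{\mathrm{per}}_\varepsilon$ on the compact torus.

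For (b), the $L^p$ bound follows from Young's convolution inequality on the compact group $\bbt^d$ together with $\|\eta^{\mathrm{per}}_\varepsilon\|_{L^1(\bbt^d)}=1$. Equivalently, it follows from Jensen/Minkowski applied to the probability measure $\eta^{\mathrm{per}}_\varepsilon(x-y)\,\mathrm{d}y$.
\begin{itemize}
\item For $f\in C(\bbt^d)$ we write
\[
J_\varepsilon f(x)-f(x)=\int_{\bbr^d}\eta_\varepsilon(z)\big(f(x-z)-f(x)\big)\,\mathrm{d}z,
\]
where we unfold again and use the periodicity of $f$. Uniform continuity of $f$ then gives uniform convergence, because $\operatorname{supp}\eta_\varepsilon\subset B_\varepsilon(0)$.
\item For $1\le p<\infty$, we use a density argument. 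Approximate $f$ by $g\in C(\bbt^d)$ in $L^p$ and split
\[
\|J_\varepsilon f-f\|_{L^p}\le\|J_\varepsilon(f-g)\|_{L^p}+\|J_\varepsilon g-g\|_{L^p}+\|g-f\|_{L^p}\le 2\|f-g\|_{L^p}+\|J_\varepsilon g-g\|_{L^\infty}.
\]
Here we used the contraction property and the fact that $|\bbt^d|=1$. Letting $\varepsilon\to0$ and then $g\to f$ gives the claim.
\end{itemize}

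For (c), we work on the Fourier side with the norm $\|f\|_{H^s}^2=\sum_k(1+|k|^2)^s|\hat f(k)|^2$. Since $\widehat{J_\varepsilon f}(k)=\hat\eta(\varepsilon k)\hat f(k)$, it suffices to show
\[
\sup_{k\in\bbz^d}(1+|k|^2)^{1/2}|\hat\eta(\varepsilon k)|\le C\varepsilon^{-1}\qquad\text{for }0<\varepsilon\le1.
\]
This rests on two properties of $\eta$.
\begin{itemize}
\item $\eta\in C_c^\infty$, so $\hat\eta$ is a Schwartz function. In particular $|\xi||\hat\eta(\xi)|\le C$ for all $\xi$, which gives $|k||\hat\eta(\varepsilon k)|\le C\varepsilon^{-1}$.
\item $|\hat\eta|\le\|\eta\|_{L^1}=1$, which bounds the remaining part of $(1+|k|^2)^{1/2}$ by a constant, and that constant is at most $\varepsilon^{-1}$ since $\varepsilon\le1$.
\end{itemize}
Together these give the multiplier bound. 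For $\varepsilon\ge1$ the constant can simply be adjusted, since only small $\varepsilon$ is relevant in the paper.

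The one point needing care is the normalization of the Fourier transform and of the $H^s$ norm on the torus, i.e.\ checking that the symbol of $J_\varepsilon$ is exactly $\hat\eta(\varepsilon k)$ through the unfolding identity. Once that is settled, the estimate in (c) is a one-line multiplier bound. I do not expect any genuine obstacle in this lemma.
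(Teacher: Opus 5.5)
Your proof is correct. The paper gives no proof of this lemma; the appendix only reports (a)--(c) as standard properties of periodic mollifiers, so there is no competing argument to compare with. Your route supplies the standard proof: the unfolding identity $\widehat{\eta^{\mathrm{per}}_\varepsilon}(k)=\hat\eta(\varepsilon k)$, contraction and density for (b), and a Fourier multiplier bound for (c). Your normalization also matches the $e^{2\pi\mathrm{i}k\cdot x}$ convention the paper uses for the Bogovski\u{\i} operator.

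An equivalent physical-space route for (c) exists. Since $J_\varepsilon$ commutes with $(1-\Delta)^{s/2}$, it suffices to take $s=0$. Then Young's inequality gives $\|\nabla J_\varepsilon g\|_{L^2}\le\|\nabla\eta_\varepsilon\|_{L^1(\mathbb{R}^d)}\|g\|_{L^2}=C\varepsilon^{-1}\|g\|_{L^2}$.

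Your restriction to $0<\varepsilon\le1$ in (c) is necessary, not a convenience. With a constant independent of $\varepsilon$, the estimate is false as $\varepsilon\to\infty$. For $f\equiv1$ one has $J_\varepsilon f=1$, so the left-hand side equals $1$ while the right-hand side is $C\varepsilon^{-1}$. The constant can be ``adjusted'' only on bounded ranges $\varepsilon\le\varepsilon_0$, with $C$ depending on $\varepsilon_0$. That suffices for the paper, where $\varepsilon=\delta^{1/4}$ with $0<\delta\ll1$.

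A minor point: the sum defining $\eta^{\mathrm{per}}_\varepsilon$ is locally finite for every $\varepsilon>0$, not only for $\varepsilon<1$, since $\eta_\varepsilon$ is compactly supported. So your separate $C^m$-convergence remark is unnecessary.
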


	\section*{Acknowledgments}
	\noindent M. Fei is partially supported by the NSF of China under Grant No.12271004 and 12471222 and the NSF of Anhui Province of China
under Grant No.2308085J10. Y. Liu is partially supported by the NSF of Jiangsu Province of China under Grant No.~BK20240572, the NSF of Jiangsu Higher Education Institutions of China under Grant No.~24KJB110020, and the China Postdoctoral Science Foundation under Grant No.~2025M773078. H. Wu is partially supported by the NSF of Shanghai under Grant No.~25ZR1401023. H. Wu is a member of the Key Laboratory of Mathematics for Nonlinear Sciences (Fudan University), Ministry of Education of China.

	\section*{Compliance with Ethical Standards}
	\subsection*{Date avability}
	Data sharing not applicable to this article as no datasets were generated during the current study.
	\subsection*{Conflict of interest}
	The authors declare that there are no conflicts of interest.

	
\end{document}